\newtheorem{Theorem}{Theorem}[section]
\newtheorem{Lemma}[Theorem]{Lemma}
\newtheorem{Corollary}[Theorem]{Corollary}
\newtheorem{Proposition}[Theorem]{Proposition}
\theoremstyle{definition}
\newtheorem{Definition}[Theorem]{Definition}
\newtheorem{Example}[Theorem]{Example}
\newtheorem{Remark}[Theorem]{Remark}
\newcommand{\be}{\begin{equation}}
\newcommand{\ee}{\end{equation}}
\newcommand{\one}{\mathbf{1}}
\newcommand{\ol}[1]{\ensuremath{\overline{#1}}}
\newcommand{\oo}{\infty}
\newcommand{\ham}{\mathrm{ham}}
\newcommand{\smp}{\mathrm{sp}}
\newcommand{\N}{\ensuremath{\mathbb{N}}}
\newcommand{\Z}{\ensuremath{\mathbb{Z}}}
\newcommand{\R}{\ensuremath{\mathbb{R}}}
\newcommand{\vol}{\ensuremath{\omega^{n}/n!}}
\newcommand{\fg}{\mathfrak{g}}
\newcommand{\fh}{\mathfrak{h}}
\newcommand{\hfg}{\widehat{\mathfrak{g}}}
\newcommand{\n}{\mathfrak{n}}
\newcommand \al{\alpha}
\newcommand\ga{\gamma}
\newcommand\de{\delta}
\renewcommand\th{\theta}
\newcommand\la{\lambda}
\newcommand\si{\sigma}
\newcommand\ta{\tau}
\newcommand\ps{\psi}
\newcommand\om{\omega}
\newcommand\Th{\Theta}
\newcommand\La{\Lambda}
\newcommand\Om{\Omega}
\def\RR{\mathbb R}
\newcommand{\dd}{{\tt d}}
\newcommand\x{\times}
\newcommand\on{\operatorname}
\newcommand\dR{\on{dR}}
\newcommand\can{\on{can}}
\newcommand\g{\mathfrak g}
\newcommand\h{\mathfrak h}
\begin{document}

\title{Universal Central Extension of the Lie Algebra of Hamiltonian Vector Fields}
\author{Bas Janssens\footnote{Mathematisch Instituut, Universiteit Utrecht, 3584 CD Utrecht, the Netherlands. Email: {\tt B.Janssens@uu.nl}}, Cornelia Vizman\footnote{Department of Mathematics,
West University of Timi\c{s}oara,
300223-Timi\c{s}oara
Romania. Email: {\tt cornelia.vizman@e-uvt.ro}}}
\maketitle
\begin{abstract}
For a connected symplectic manifold $X$,
we determine the universal central extension of the Lie algebra $\ham(X)$ of hamiltonian vector fields.
We classify the central extensions of $\ham(X)$, of the Lie algebra $\smp(X)$ of symplectic vector fields, of
the Poisson Lie algebra $C^{\infty}(X)$, and of its compactly supported version $C^{\infty}_{c}(X)$.
\end{abstract}

\section{Introduction}

In this paper, we classify continuous central extensions 
of several infinite dimensional Lie algebras associated to a symplectic manifold $(X,\omega)$;
the Poisson Lie algebra $C^{\infty}(X)$,  the compactly supported Poisson Lie algebra $C^{\infty}_{c}(X)$, 
the Lie algebra of hamiltonian vector fields $\ham(X)$,
and the Lie algebra $\mathrm{sp}(X)$ of symplectic vector fields.

If $\fg$ is the Lie algebra of a locally convex Lie group $G$, then
central extensions of $\fg$ by $\R$ are
the infinitesimal versions of central extensions of $G$ by $U(1)$, whose
classification plays
a pivotal role in projective unitary representation theory 
\cite{PressleySegal1986, Neeb02, JanssensNeeb2015}.
If $X$ is compact, then the Lie algebra of hamiltonian or sym\-plectic vector fields is the 
Lie algebra of the group of hamiltonian or sym\-plectic diffeomorphisms, respectively.
If, moreover, $\omega$ is integral, then the Poisson Lie algebra is the Lie algebra of the quantomorphism group
\cite{Schmid1978, RatiuSchmid1981}. 
Although the link with Lie groups is important for applications, 
the present paper is only concerned with Lie algebras,
so we will be able to handle noncompact as well as compact manifolds $X$.
The integrability issue is addressed in a forthcoming paper.

Continuous central extensions of a locally convex Lie algebra $\fg$ by $\R$ are classified by 
the continuous second Lie algebra cohomology $H^2(\fg, \R)$ with coefficients in $\R$. 
Therefore, a large part of this paper 
is devoted to determining this cohomology group for the above mentioned Lie algebras associated to a symplectic manifold $X$. 

This problem 
is somewhat different in flavour from determining the cohomology with 
adjoint coefficients, cf.\ \cite{Lichnerowicz1973, Lichnerowicz1974H1DIFF, 
DeWildeLecomte1983, DeWildeLecomte1983H3DIFF, DeWildeLecomteGutt1984}, in the sense that 
our 2-cocycles come from distributions on $X \times X$ that can be
singular in all directions.  
Cohomology with real coefficients was studied in \cite{GelfandKalininFuks1972, GelfandMathieu1992}
for $\ham(X)$, where $X$ is a formal neighbourhood or a torus, respectively.
Lie algebra cohomology for various Lie algebras of vector fields is treated in \cite{Fu}.  
Our description of the second cohomology 
goes back to Kirillov \cite{kir} 
(for dimension 2)
and Roger \cite{Roger1995} (for arbitrary dimension), 
but no proof has appeared in the literature to the best of our knowledge.

\subsection{Summary of the Results}

We give a brief summary of our main results: the 
continuous second Lie algebra cohomology 
of 
$C^{\infty}_{c}(X)$, $C^{\infty}(X)$, $\ham(X)$, and $\mathrm{sp}(X)$, 
and the universal central extension of $\ham(X)$ for a symplectic 
manifold $(X,\om)$.
It turns out that for $X$ noncompact, the same Lie algebra serves as the universal central extension
of the Poisson Lie algebra $C^{\infty}(X)$. 
 
 \subsubsection{Continuous second Lie algebra cohomology}

The second Lie algebra cohomology of $C^{\infty}_{c}(X)$, $C^{\infty}(X)$, and $\ham(X)$
is determined in Section \ref{s4}.
The case $\mathrm{sp}(X)$ is based on this, but requires also some different 
methods. It is done in Section \ref{s6}.

\paragraph{Compactly supported Poisson Lie algebra.}
Let $C^{\infty}_{c}(X)$ be the Lie algebra of 
compactly supported, smooth functions on a symplectic manifold,
equipped with the Poisson bracket.
Then Theorem~\ref{HoofdZaak} shows that
the central extensions of $C^{\infty}_{c}(X)$ by $\R$ are classified by
the first de Rham cohomology of $X$,
	\be
	H^2(C_{c}^{\infty}(X), \R) \simeq H^1_{\mathrm{dR}}(X)\,.
	\ee
This theorem underlies all other results in this paper.
\paragraph{Poisson Lie algebra.} 
Let $C^{\infty}(X)$ be the Lie algebra of smooth functions on $X$, equipped with the Poisson bracket.
Then Theorem~\ref{noncptpoissonh2} shows that 
the central extensions of $C^{\infty}(X)$ by $\R$ are classified by
the compactly supported first de Rham cohomology of $X$,
	\be
	H^2(C^{\infty}(X), \R) \simeq H^1_{\mathrm{dR}, c}(X)\,.
	\ee
\paragraph{Hamiltonian vector fields.}
 Let $\ham(X)$ be the Lie algebra of hamiltonian vector fields.
 Then Theorem~\ref{H2Ham} shows that
 continuous central extensions of $\ham(X)$ by $\R$
 are classified by
\be\label{eq:hamvechom}
H^2(\ham(X),\R) \simeq Z^1_{c}(X)/d\Omega^{0}_{c,0}(X)\,,
\ee
where $Z^1_{c}(X)$ is the space of closed, compactly supported 1-forms  on $X$, and $\Omega^{0}_{c,0}(X)$ is the space of compactly supported smooth functions $f$ on $X$ that
integrate to zero, $\int_{X} f \vol = 0$. 
If $X$ is compact, then \eqref{eq:hamvechom} reduces to
\be \label{eq:incptcase}
H^2(\ham(X),\R) \simeq H^1_{\mathrm{dR}}(X)\,,
\ee
as one can 
always change $f$ by a suitable constant to make its integral zero. 
 If $X$ is not compact, then there is precisely one `extra' central extension.
 The 1-dimensional kernel of the canonical surjection 
 $Z^1_{c}(X)/d\Omega^{0}_{c,0}(X) \rightarrow H^1_{\mathrm{dR},c}(X)$ 
 is spanned by the Kostant-Souriau class $[\psi_{KS}]$, so that for noncompact $X$,
 the expression
\eqref{eq:hamvechom} reduces to 
\be
H^2(\ham(X),\R) \simeq H^1_{\mathrm{dR},c}(X) \oplus \R [\psi_{KS}]\,.
\ee
The class $[\psi_{KS}]$  
corresponds to the central extension 
 $\R \rightarrow C^{\infty}(X) \rightarrow \ham(X)$, which is trivial for compact $X$. 

The result \eqref{eq:incptcase} for compact $X$ was stated in \cite{Roger1995}, but 
we are not aware of any proof in the literature so far.
	   
\paragraph{Symplectic vector fields.}
To describe the continuous second Lie algebra cohomology of the Lie algebra 
$\mathrm{sp}(X)$ of symplectic vector fields, introduce the bilinear form
$H^1_{\mathrm{dR},c}(X) \times H^1_{\mathrm{dR}}(X) \rightarrow \R$, defined by
\[
([\alpha], [\beta]) := \int_{X}\alpha \wedge \beta \wedge \omega^{n-1}/(n-1)!,
\]
and the 4-linear form 
$H^1_{\mathrm{dR},c}(X) \times \wedge^3 H^1_{\mathrm{dR}}(X) \rightarrow \R$, defined by
\[
([\alpha], [\beta_1], [\beta_2], [\beta_3]) :=  \int_{X}\alpha \wedge 
\beta_1 \wedge \beta_2 \wedge \beta_3 \wedge \omega^{n-2}/(n-2)!.
\]

If $X$ is compact and connected, Theorem~\ref{thm:spcpt} shows that
\be\label{eq:SVsp1}
H^2(\mathrm{sp}(X),\R) \simeq {\textstyle \bigwedge^2}H^1_{\mathrm{dR}}(X)^*
\oplus \mathrm{K}_{\mathrm{c}},
\ee
where $K_{\mathrm{c}} \subseteq H^1_{\mathrm{dR}}(X)$ 
is the set of classes $a \in H^1_{\mathrm{dR}}(X)$ such that
\[
(a, b_1, b_2, b_3) = 
\frac{1}{\mathrm{vol}(X)}
\sum_{\mathrm{cycl}}(a,b_1)(b_2,b_3)
\]
for all $b_1, b_2, b_3\in H^1_{\mathrm{dR}}(X)$.

If $X$ is noncompact and connected, Theorem \ref{thm:spncpt} shows that
\be\label{eq:SVsp2}
H^2(\smp(X),\R) \simeq {\textstyle \bigwedge^{2}}H^1_{\mathrm{dR}}(X)^*
\oplus \R [\psi'_{KS}]
\oplus \mathrm{K}_{\mathrm{nc}}\,,
\ee
where
$\psi'_{KS}(v,w)=\omega(v,w)_x$ is an extension to $\mathrm{sp}(X)$ of the
Kostant-Souriau cocycle $\psi_{KS}$,
and 
$\mathrm{K}_{\mathrm{nc}}
\subseteq H^1_{\mathrm{dR},c}(X)$ 
is the set of classes $a$ such that
$(a,b) = 0$ and $(a,b_1,b_2,b_3) = 0$
for all $b$ and $b_1,b_2,b_3$ in $H^1_{\mathrm{dR}}(X)$.

The case of compact, connected $X$ was treated in \cite{Vizman2006}
under the assumption that $H^2(\ham(X),\R) \simeq H^1_{\mathrm{dR}}(X)$, which is 
justified by the present paper.
The case of noncompact, connected $X$ appears to be new.

\begin{table}[h]
\centering 
\begin{tabular}{l c c rrrrrrr} 
\hline \\[- 3 mm]
\multicolumn{1}{c}{$\fg$} &\, & $X$ &\multicolumn{1}{c}{$H^2(\fg, \R)$}
\\ [0.5ex]
\hline \\[-3mm] 
& &compact & $H^1_{\mathrm{dR}}(X)$  \\[-1ex]
\raisebox{1.5ex}{$C^\oo_c(X)$} & \raisebox{1.5ex}{\,}&noncompact
& $H^1_{\mathrm{dR}}(X)$  \\[1ex]
\hline\\[-3 mm]
& &compact & $H^1_{\mathrm{dR}}(X)$  \\[-1ex]
\raisebox{1.5ex}{$C^\oo(X)$} & \raisebox{1.5ex}{\,}& noncompact
&$H^1_{\mathrm{dR,c}}(X)$ \\[1ex]
\hline\\[-3 mm]
& &compact & $H^1_{\mathrm{dR}}(X)$  \\[-1ex]
\raisebox{1.5ex}{$\ham(X)$} & \raisebox{1.5ex}{\,}& noncompact
&$H^1_{\mathrm{dR,c}}(X)\oplus\R$  \\[1ex]
\hline\\[-3 mm]
& &compact & $\La^2H^1_{\mathrm{dR}}(X)^*\oplus K_c$ \\[-1ex]
\raisebox{1.5ex}{$\mathrm{sp}(X)$} & \raisebox{1.5ex}{\,}& noncompact
&$\La^2H^1_{\mathrm{dR}}(X)^*\oplus K_{nc}\oplus\R$ \\[1ex]
\hline 
\end{tabular}
\caption{Results on second continuous Lie algebra cohomology} 
\label{tab:PPer}
\end{table}

\subsubsection{The universal central extension} 


A Lie algebra possesses a universal central extension if and only if 
it is perfect \cite{vanderKallen1973}.
Using results in \cite{Lichnerowicz74},
we show in Corollary~\ref{cptsuppfirstcohomology},
Proposition~\ref{Prop:noncpspoisson}
and Corollary~\ref{hamp}
that of the above Lie algebras, only $\ham(X)$
and $C^{\infty}(X)$ are perfect; the former for any symplectic manifold, 
the latter only if $X$ has no compact connected components.

We describe the universal central extension of these two Lie algebras.
Let $* \,\colon \Omega^{k}(X) \rightarrow \Omega^{2n-k}(X)$ be
the 
\emph{symplectic Hodge star operator}, 
which is defined like the ordinary Hodge star operator, with
the Riemannian form replaced by the symplectic form.
Its defining property is that for all $\alpha, \beta \in \Omega^{k}(X)$,
$\beta \wedge *\alpha = ( \wedge^{k}\omega)(\beta,\alpha) \omega^{n}/n!$,
where $\wedge^{k}\omega$ is the $(-1)^{k}$-symmetric form induced by 
$\omega$ on $\wedge^{k}T^*X$.

From the de Rham differential $d$, 
one then obtains 
the \emph{canonical homology operator} $ \delta:= (-1)^{k+1}*d\,*$,
which lowers rather than raises the degree,
$\delta \colon \Omega^k(X) \rightarrow \Omega^{k-1}(X)$.
The universal central extension 
is the Fr\'echet Lie algebra
\be 
\Omega^1(X)/\delta\Omega^2(X)\,,
\ee
with Lie bracket
\be
[[\alpha],[\beta]] :=  [\delta\alpha \cdot d\delta\beta]\,. 
\ee
This is the 
universal central extension
of $\ham(X)$ for any connected, symplectic manifold $X$, and 
also of $C^{\infty}(X)$ if $X$ is noncompact.

\paragraph{Universal central extension for hamiltonian vector fields.}

To obtain the universal central extension of $\ham(X)$, equip $\Omega^1(X)/\delta\Omega^2(X)$
with the projection 
\be\label{univextintro}
\mathfrak{z} \longrightarrow \Omega^1(X)/\delta\Omega^2(X) \stackrel{\pi}{\longrightarrow} 
\ham(X)\,,
\ee
defined by 
$\pi([\alpha]) := X_{\delta\alpha}$, where $X_{f}$
is the hamiltonian vector field corresponding to $f \in C^{\infty}(X)$.
The kernel of $\pi$ 
is precisely the centre 
$
\mathfrak{z} = \mathrm{Ker}(d\circ\delta)/\delta\Omega^2(X)
$ of $\Omega^1(X)/\delta\Omega^2(X)$. 
If $X$ is compact, then $\mathfrak{z}$ is 
the first canonical homology 
\[
H^{\mathrm{can}}_{1}(X) := \mathrm{Ker}(\delta \colon \Omega^1(X) \rightarrow \Omega^0(X))/
 \mathrm{Im}(\delta \colon \Omega^2(X) \rightarrow \Omega^1(X))\,,
 \]
which is isomorphic to
the de Rham cohomology 
$H^{2n-1}_{\mathrm{dR}}(X)$, with $\mathrm{dim}(X) = 2n$,
 by the Hodge star operator.
 If $X$ is noncompact, then
$H^{2n-1}_{\mathrm{dR}}(X) \subset \mathfrak{z}$ is a subspace of co\-di\-men\-sion~1.

Theorem~\ref{universalHam} shows that for $X$ connected, the central extension
\eqref{univextintro} 
is universal for continuous extensions of $\ham(X)$ by a finite dimensional centre, and,
if $H^{2n-1}_{\mathrm{dR}}(X)$ is finitely generated,
even for 
linearly split continuous
extensions by an infinite dimensional centre. 
The requirement that a Lie algebra extension be linearly split is quite natural, and 
automatically satisfied if it comes from a Lie group extension.

\paragraph{Universal central extension for Poisson Lie algebra.}

For a noncompact, connected, symplectic manifold $X$, the universal central extension of 
$C^{\infty}(X)$ is
again $\Omega^{1}(X)/\delta\Omega^2(X)$, but with a different projection.
Corollary~\ref{Cor:univpois} shows that the central extension
\be \label{introcepois}
H^{\mathrm{can}}_{1}(X) \rightarrow \Omega^1(X)/\delta\Omega^2(X)\stackrel{\delta}{\longrightarrow} C^{\infty}(X)
\ee
is universal for continuous extensions of the Poisson Lie algebra $C^{\infty}(X)$ by a 
finite dimensional centre, and, 
if $H^{\mathrm{can}}_{1}(X)$ 
is finite dimensional, also
for 
linearly split continuous
extensions by an infinite dimensional centre. 

\paragraph{Roger cocycles and singular cocycles.} \label{RogerandSingular}
We describe two particularly relevant classes of 2-cocycles on $\ham(X)$,
related to de Rham cohomology and singular homology of $X$. 

By the universal property of (\ref{univextintro}) for a connected, symplectic manifold $X$,
continuous 2-cocycles $\psi_{}$ of $\ham(X)$
correspond bijectively to continuous linear functionals $S$
on $\Omega^{1}(X)/\delta\Omega^2(X)$
by
\begin{equation}\label{oneconvention}
\psi_{S}(X_{f},X_{g}) = S([fdg])\,.
\end{equation}

In order to describe the isomorphism \eqref{eq:hamvechom}, 
it is most convenient to consider the class of \emph{Roger cocycles} $\psi_{\alpha}$, 
related to de Rham cohomology of~$X$ \cite{Roger1995}.
They come from 
a compactly supported, closed 1-form $\alpha \in \Omega_{c}^1(X)$.
Via the functional $S_{\alpha}([\beta]) := \int_{X}*\beta \wedge \alpha$, they yield the
 2-cocycle
\be
\psi_{\alpha}(X_f,X_g)=S_\al([fdg]) = \int_{X} f(i_{X_g}\alpha) \vol\,.
\ee
The isomorphism $Z^1_{c}(X)/d\Omega^0_{c,0}(X) \simeq H^2(\ham(X),\R)$
is then given by $[\alpha] \mapsto [\psi_{\alpha}]$.

We also introduce a class of \emph{singular cocycles} $\psi_{N}$ that are expected to be 
of particular relevance in the projective unitary representation theory of $\ham(X)$.
They  come from $(2n-1)$-dimensional submanifolds $N$ of $X$. 
The functional $S_{N}([\beta]) = \int_{N} *\beta$
on $\Omega^1(X)/\delta\Omega^2(X)$ yields the 2-cocycle
\be
	\psi_{N}(X_{f}, X_{g}) =S_N([fdg])= \int_{N} fdg \wedge \omega^{n-1}/(n-1)!\,.
\ee

\subsection{Outline}
Having already summarised the main results of the present paper, we can be brief about the outline.

In Section \ref{s2}, we fix our notation and introduce the Hodge star operator. 
We describe the relevant notions of central extensions and Lie algebra cohomology, 
and prove some simple but useful lemmas.

In Section \ref{sec:chars}, we determine the characters of the Lie algebras associated to $X$,
which amounts to determining their first cohomology with trivial coefficients. 
We prove that $\ham(X)$ is perfect, as is 
 $C^{\infty}(X)$ for $X$ noncompact.

Section \ref{s4} is the heart of the paper. We prove that all continuous 2-cocycles are diagonal, 
and hence described by differential operators. We then show that they are of order at most 1.
We investigate how the cocycles of the different Lie algebras are 
related to each other. Starting from $C^{\infty}_{c}(X)$, we derive the specific form of the cocycles
for the Lie algebras associated to $X$.

In Section \ref{s5}, we give a detailed description of the universal central extension 
$\Omega^{1}(X)/\delta\Omega^{2}(X)$ of $\ham(X)$, and show that it also serves
as a universal central extension of $C^{\infty}(X)$ if $X$ is noncompact.

In Section \ref{s6}, we derive $H^2(\smp(X),\R)$ from the results in Section \ref{s4} using two similar, 
but slightly different transgression arguments, one 
for compact and one for noncompact manifolds $X$.


\section{Preliminaries}\label{s2} 
In this section, we collect some useful and essentially well known results, 
and adapt them to our setting. 
In Section \ref{sec:notconv}, we fix some notation.
In Section~\ref{sec:contcohomo},
we give the precise link between (universal) central extensions 
of locally convex Lie algebras
and continuous Lie algebra cohomology. In Section~\ref{shsop},
we describe the Hodge star operator and canonical cohomology
of symplectic manifolds. Finally, in Section~\ref{sec:somelems}, we collect some
basic lemmas that will be used later on.

\subsection{Notation and Conventions}\label{sec:notconv}

We fix some notation and conventions that will be used throughout the paper.
Let $(X,\omega)$ be a (second countable) symplectic manifold of dimension $2n$,
not necessarily connected unless specified otherwise.
Then the Lie algebra of \emph{symplectic vector fields} is denoted by
\[\mathrm{sp}(X) := \{v \in \mathrm{vec}(X) \,;\, L_{v}\omega = 0\}\,.\]
The \emph{hamiltonian vector field} $X_{f}$ of
$f \in C^{\infty}(X)$ is
the unique vector field such that $df = -i_{X_{f}} \omega$.
Since the Lie bracket of $v,w \in \mathrm{sp}(X)$
is hamiltonian with $-i_{[v,w]}\omega = d\omega(v,w)$,
the Lie algebra of hamiltonian vector fields
\[
\ham(X) := \{X_{f}\,;\, f \in C^{\infty}(X)\}
\]
is an ideal in $\mathrm{sp}(X)$.
The map $X \mapsto -[i_{X}\omega]$ identifies the
quotient of $\mathrm{sp}(X)$ by $\ham(X)$ with the abelian Lie algebra $H_{\mathrm{dR}}^{1}(X)$,
\begin{equation}
0 \rightarrow \ham(X) \rightarrow \mathrm{sp}(X) \rightarrow H_{\mathrm{dR}}^{1}(X) \rightarrow 0\,.
\end{equation}

We equip the space $C^{\infty}(X)$ of $\R$-valued, smooth  functions on $X$ 
with the \emph{Poisson bracket}
$\{f,g\} = \omega(X_{f},X_{g})$.
The kernel of the Lie algebra homomorphism $f \mapsto X_{f}$, the space 
$H^0_{\mathrm{dR}}(X)$ of locally constant functions, is central in $C^{\infty}(X)$.
We thus 
we obtain a central extension
\begin{equation}
0 \rightarrow H^{0}_{\mathrm{dR}}(X) \rightarrow C^{\infty}(X) \rightarrow \ham(X) \rightarrow 0\,.
\end{equation}

We also consider the subalgebra
$C^{\infty}_{c}(X)$ of
\emph{compactly supported} functions.
The map $C^{\infty}_{c}(X) \rightarrow H^{2n}_{\mathrm{dR},c}(X)$ defined by 
$f \mapsto [f \omega^n/n!]$ is a Lie algebra homomorphism into an abelian Lie algebra; every
commutator $\{f,g\} = i_{X_f}dg$ maps to zero, as
$(i_{X_{f}}dg)\omega^n/n! = dg \wedge (i_{X_{f}}\omega^n/n!) = d(g i_{X_{f}}\omega^n/n!)$ is exact.
We define the ideal $C^{\infty}_{c,0}(X)$ to be the kernel of this map,
\be\label{ide}
C^\oo_{c,0} (X) := \{f \in C_{c}^{\infty}(X)\,;\, 
0 = [f\omega^n/n!] \in H^{2n}_{\mathrm{dR},c}(X)\}\,,
\ee
so that we obtain an exact sequence 
\be
0 \rightarrow C^{\infty}_{c,0}(X) \rightarrow C^{\infty}_{c}(X) \rightarrow H^{2n}_{\mathrm{dR}, c}(X) \rightarrow 0\,.
\ee 
If $X$ is connected, $C^{\infty}_{c,0}(X)$ is the space of zero-integral functions, $\int_{X}f \vol = 0$.

\subsection{The symplectic hodge star operator}\label{shsop}

The \emph{symplectic Hodge star operator}
$* \,\colon \Omega^{k}(X) \rightarrow \Omega^{2n-k}(X)$ is uniquely defined 
by the requirement that
$\beta \wedge *\alpha =( \wedge^{k}\omega)(\beta,\alpha) \omega^{n}/n!$
for all $\beta \in \Omega^{k}(X)$,
where $\wedge^{k}\omega$ is the $(-1)^{k}$-symmetric form induced by 
$\omega$ on $\wedge^{k}T^*X$. 
Alternatively, it is described by
contraction 
with the tensor 
\[
T = \omega^{\mu_1\nu_1} \cdots \omega^{\mu_{k}\nu_{k}}\,
\partial_{\mu_{1}}\wedge \cdots \wedge \partial_{\mu_{k}} 
\otimes i_{\partial_{\nu_{k}}} \cdots \, i_{\partial_{\nu_{1}}} \vol.
\]
We define the \emph{canonical homology operator} 
$\delta \colon \Omega^k(X) \rightarrow \Omega^{k-1}(X)$
by
\be \label{canhomop}
\delta:= (-1)^{k+1}*d\,*\,.
\ee
In the following theorem, we gather some facts about $*$ and $\delta$
that we will need later.

\begin{Theorem}[Brylinski]\label{Brylinskisthm}
The symplectic star operator satisfies the equalities
\begin{eqnarray}
*f_0df_1\wedge\ldots\wedge df_k &=& (-1)^{k}f_0i_{X_{f_k}}\ldots i_{X_{f_1}}\omega^n/n! 
\quad \mathrm{and} \label{ster}\\
*^2 &=& \mathrm{Id}. \label{idempotentster}
\end{eqnarray}
Furthermore, if $\pi \in \Gamma(\wedge^{2}TX)$ is the Poisson bivector field defined by 
$\omega$, then $\delta  = i_{\pi} \circ d - d \circ i_{\pi}$. 
For all $f_0,\ldots,f_k \in C^{\infty}(X)$, we thus have
\begin{eqnarray}\label{PoissonCoh}
\delta (f_0 df_1\wedge \ldots \wedge df_k) &=& 
\sum_{i=1}^{k} (-1)^{i+1}\{f_0,f_i\}df_1\wedge \ldots d\hat{f}_{i} \ldots \wedge df_k \\ \nonumber
& + &
\sum_{1\leq i < j \leq k} (-1)^{i+j} f_0 d\{f_i,f_j\} df_1 \wedge \ldots d\hat{f}_{i} \ldots
d\hat{f}_{j} \ldots \wedge df_{k}\,. 
\end{eqnarray}
\end{Theorem}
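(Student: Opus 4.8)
The plan is to prove the three assertions in the order \eqref{ster}, \eqref{idempotentster}, and then the operator identity $\delta = i_\pi\circ d - d\circ i_\pi$, deducing \eqref{PoissonCoh} from the last by a direct expansion. Since $*$ is tensorial, both \eqref{ster} and \eqref{idempotentster} are pointwise statements of multilinear algebra on the symplectic vector space $(T_xX,\omega_x)$, while $\delta$, $i_\pi d$ and $d i_\pi$ are additive local operators; moreover, in a Darboux chart every $k$-form is a \emph{finite} sum of terms $f_0\,df_1\wedge\cdots\wedge df_k$ with the $f_j$ among the coordinates. Hence it suffices to argue in such a chart and on such generators throughout.

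For \eqref{ster} I first record the elementary identity $\beta\wedge\big(i_{v_k}\cdots i_{v_1}(\vol)\big)=\beta(v_1,\dots,v_k)\,\vol$, valid for any $k$-form $\beta$ and vectors $v_1,\dots,v_k$ (verified on a coordinate basis). Because $*$ is $C^\infty$-linear in $f_0$, the claim reduces to identifying $(\wedge^k\omega)(\beta,df_1\wedge\cdots\wedge df_k)$ with $(-1)^k\beta(X_{f_1},\dots,X_{f_k})$; both are linear in $\beta$, so it is enough to test on decomposables $\beta=dg_1\wedge\cdots\wedge dg_k$. There the left-hand side is $\det\big(\langle dg_i,df_j\rangle_\omega\big)=\det(\{g_i,f_j\})$, while the identity above gives $(-1)^k\det\big(dg_i(X_{f_j})\big)=(-1)^k\det(\{f_j,g_i\})$; the two agree because $\{g_i,f_j\}=-\{f_j,g_i\}$ supplies exactly the factor $(-1)^k$. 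Here I use $\langle df,dg\rangle_\omega=\{f,g\}$ and $dg(X_f)=\{f,g\}$, both immediate from $df=-i_{X_f}\omega$ and the degree-one case of the defining property of $*$.

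For \eqref{idempotentster} I evaluate $*$ twice on the Darboux monomials $dp_I\wedge dq_J$ by means of \eqref{ster}, using $X_{p_i}=\partial_{q_i}$ and $X_{q_i}=-\partial_{p_i}$; each application of $*$ sends a monomial to a monomial up to sign, and the two signs cancel, giving $*^2=\mathrm{Id}$. For the operator identity I test $\delta\alpha=(i_\pi d-d i_\pi)\alpha$ on $\alpha=f_0\,df_1\wedge\cdots\wedge df_k$. Applying $\delta=(-1)^{k+1}*d*$ and \eqref{ster}, I differentiate $f_0\,i_{X_{f_k}}\cdots i_{X_{f_1}}(\vol)$; since $d(\vol)=0$ and $L_{X_{f_i}}(\vol)=0$, Cartan's formula together with $[L_v,i_w]=i_{[v,w]}$ let me push $d$ through the iterated contraction so that the only contributions are the contraction of $df_0$ and the terms in which some pair $X_{f_i},X_{f_j}$ is replaced by $[X_{f_i},X_{f_j}]=X_{\{f_i,f_j\}}$. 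Re-applying $*$ and invoking $*^2=\mathrm{Id}$ turns these into the two sums of \eqref{PoissonCoh}. Expanding $(i_\pi d-d i_\pi)\alpha$ instead, via $i_\pi(dg_1\wedge\cdots\wedge dg_m)=\sum_{i<j}\pm\{g_i,g_j\}\,dg_1\wedge\cdots\widehat{dg_i}\cdots\widehat{dg_j}\cdots\wedge dg_m$ and $\pi(df,dg)=\{f,g\}$, produces the same two sums. As both operators are additive and local and agree on every generator, they coincide, proving simultaneously $\delta=i_\pi d-d i_\pi$ and \eqref{PoissonCoh}.

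The \emph{main obstacle} is the sign bookkeeping, which is entirely concentrated in pushing $d$ through $i_{X_{f_k}}\cdots i_{X_{f_1}}(\vol)$. The prototype is the identity $d\big(i_{v_2}i_{v_1}(\vol)\big)=-i_{[v_1,v_2]}(\vol)$, obtained from $d\,i_{v_2}=L_{v_2}-i_{v_2}d$, $d(\vol)=0$ and $L_{v_1}(\vol)=0$; its generalisation to $k$ contractions follows by induction but must then be reconciled carefully with the prefactor $(-1)^{k+1}$ in $\delta$ and with the signs $(-1)^{i+1}$ and $(-1)^{i+j}$ appearing in \eqref{PoissonCoh}. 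An alternative that circumvents the explicit Cartan calculus is the Lefschetz/$\mathfrak{sl}_2$ formalism, in which $\omega\wedge(\cdot)$ and its contraction adjoint $i_\pi$ generate an $\mathfrak{sl}_2$-action on $\Omega^\bullet(X)$ and $*$ is realised as a fixed element of the associated group; this merely relocates, rather than removes, the same sign computation.
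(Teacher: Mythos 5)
Your proposal is mathematically sound, but it is worth knowing that the paper does not prove this theorem at all: its ``proof'' consists of citing Brylinski's papers (\eqref{idempotentster} and \eqref{PoissonCoh} to [Br88, \S 2], and \eqref{ster} to [Brylinski1990, \S 5]). So your direct verification is necessarily a different route --- in effect you are reconstructing Brylinski's original computations rather than paralleling anything in this paper. The key steps check out: the identity $\beta\wedge i_{v_k}\cdots i_{v_1}(\omega^n/n!)=\beta(v_1,\dots,v_k)\,\omega^n/n!$ is correct with that ordering of contractions; with the Gram-determinant convention for $\wedge^k\omega$ (which is what the paper's ``$(-1)^k$-symmetric form induced by $\omega$'' means) one gets $(\wedge^k\omega)(dg_1\wedge\cdots\wedge dg_k,\,df_1\wedge\cdots\wedge df_k)=\det(\{g_i,f_j\})$, and since $dg_i(X_{f_j})=\{f_j,g_i\}=-\{g_i,f_j\}$ the factor $(-1)^k$ in \eqref{ster} comes out exactly as you say; and the prototype $d(i_{v_2}i_{v_1}\mu)=-i_{[v_1,v_2]}\mu$ for $d\mu=0$, $L_{v_i}\mu=0$ is correct and, together with $[X_f,X_g]=X_{\{f,g\}}$ and the expansion of $df_0\wedge i_{X_{f_k}}\cdots i_{X_{f_1}}(\omega^n/n!)$ into single contractions, does produce both sums of \eqref{PoissonCoh}. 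What your approach buys is a self-contained proof depending only on the defining property of $*$ and Cartan calculus; what it costs is the sign bookkeeping you honestly flag as the main obstacle --- that part is sketched rather than executed, and to make the argument complete you would need to carry out the induction on $k$ contractions and reconcile the resulting signs with the prefactor $(-1)^{k+1}$ in $\delta$ and the signs $(-1)^{i+1}$, $(-1)^{i+j}$ in \eqref{PoissonCoh}. Since the statement being proved pins down every sign, this residual computation is routine but not optional.
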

\begin{proof}
Equations~\eqref{idempotentster} and~\eqref{PoissonCoh} are proven in
\cite[\S 2]{Br88},  and equation~\eqref{ster} in \cite[\S 5]{Brylinski1990}.
\end{proof}

\begin{Remark}\label{kis12}
The special cases $k = 1$ and $k=2$ of \eqref{PoissonCoh} will be of special use later on:
\begin{gather}
\de(f_0df_1)=\{f_0,f_1\}\,,\label{rkform1}\\
\de(f_0df_1\wedge df_2)=\{f_0,f_1\}df_2-\{f_0,f_2\}df_1-f_0d\{f_1,f_2\}\,. \label{rkform2}
\end{gather} 
\end{Remark}

\begin{Lemma}\label{lem1}
$d\Omega^0(X) \subseteq \delta \Omega^{2}(X)$.
\end{Lemma}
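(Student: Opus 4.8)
The goal is to show $d\Omega^0(X)\subseteq\delta\Omega^2(X)$, i.e. that every exact $1$-form $df$ with $f\in C^\infty(X)$ can be written as $\delta\beta$ for some $2$-form $\beta$.

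The plan is to produce $\beta$ explicitly using the formulas just established in Theorem~\ref{Brylinskisthm} and Remark~\ref{kis12}. The natural first move is to look at $\delta$ applied to a decomposable $2$-form built from $f$ and the symplectic data. Concretely, I would start from equation~\eqref{rkform2},
\[
\de(f_0\,df_1\wedge df_2)=\{f_0,f_1\}\,df_2-\{f_0,f_2\}\,df_1-f_0\,d\{f_1,f_2\}\,,
\]
and try to choose $f_0,f_1,f_2$ so that the right-hand side collapses to a multiple of $df$. The cleanest choice is to exploit that the constant function $1$ lies in $C^\infty(X)$: taking $f_0=1$ kills the first two terms, since $\{1,f_i\}=0$, leaving
\[
\de(df_1\wedge df_2)=-\,d\{f_1,f_2\}\,.
\]
This already shows that $d\{f_1,f_2\}\in\delta\Omega^2(X)$ for any $f_1,f_2$.

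The remaining, and only real, obstacle is that not every $f\in C^\infty(X)$ is globally of the form $\{f_1,f_2\}$ — Poisson brackets need not exhaust $C^\infty(X)$, and more to the point the identity above is only directly useful for functions expressible as a single bracket. To get the full inclusion I would instead keep $f$ general and use equation~\eqref{rkform1}, $\de(f_0\,df_1)=\{f_0,f_1\}$, which is a statement about $0$-forms, together with~\eqref{rkform2} at the level of $1$-forms. The efficient route is to verify the claim locally and then patch, or better, to observe that since $*^2=\Id$ by~\eqref{idempotentster} and $\delta=(-1)^{k+1}*d*$, we have on $1$-forms the relation tying $\delta$ on $2$-forms back to $d$ on $0$-forms through the star operator. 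Explicitly, for a function $f$ one computes $*\,df$, which is a $(2n-1)$-form, and then seeks $\beta$ with $*\beta$ chosen so that $d(*\beta)=\pm *df$; applying $*$ and using $*^2=\Id$ converts this into $\delta\beta=df$.

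Thus the concrete final step is: given $f$, set $\beta:=*\,(f\,\omega^{n}/n!\text{-type correction})$ so that, after applying $\delta=(-1)^{k+1}*d*$ and $*^2=\Id$, the de Rham differential $df$ reappears; the sign bookkeeping from the $(-1)^{k+1}$ factor for $k=2$ is the one routine computation to check. I expect the main subtlety to be organizing the constant-function trick $f_0=1$ correctly so that it applies to an \emph{arbitrary} $f$ rather than only to functions already presented as brackets — resolving this either by the direct star-operator computation above, or by noting that $df=\delta(\mathrm{something})$ follows from $\delta(df\wedge\theta)$ for a suitably chosen auxiliary $1$-form $\theta$ with $*\theta$ adapted to $\omega$. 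Once the explicit $\beta$ is exhibited, the inclusion is immediate.
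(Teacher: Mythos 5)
Your overall route is the paper's route: the proof in the text also discards the bracket identities of Remark~\ref{kis12} and works directly with $\delta=(-1)^{k+1}*d\,*$ and $*^2=\mathrm{Id}$, reducing the claim to finding a $(2n-2)$-form $\gamma$ with $d\gamma=*df$ and then taking $\beta=*\gamma$. However, your proposal stops exactly at the one step that carries the content of the lemma, and the candidate you sketch for it is wrong. Writing $df=\delta\beta$ as $d(*\beta)=\pm *df$ requires knowing that $*df$ is \emph{exact}; a priori it is only closed (since $\delta\,df=\{1,f\}=0$), and closedness does not suffice because no assumption is made on $H^{2n-1}_{\mathrm{dR}}(X)$. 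The needed input is the explicit identity from eqn.~\eqref{ster}: $*df=df\wedge\omega^{n-1}/(n-1)!=d\bigl(f\,\omega^{n-1}/(n-1)!\bigr)$, which exhibits the primitive $\gamma=f\,\omega^{n-1}/(n-1)!$ and hence $\beta=*\bigl(f\,\omega^{n-1}/(n-1)!\bigr)\in\Omega^2(X)$ with $df=-\delta\beta$. Your guess ``$\beta:=*(f\,\omega^{n}/n!\text{-type correction})$'' cannot be repaired into this: $f\,\omega^{n}/n!$ is a top-degree form, so its star is a $0$-form, not a $2$-form; the correct power of $\omega$ is $n-1$, and identifying it is precisely the computation you defer as ``routine.''

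The first half of your proposal (using $\delta(df_1\wedge df_2)=-d\{f_1,f_2\}$ from \eqref{rkform2} with $f_0=1$) is a correct observation but, as you yourself note, only covers $df$ for $f$ a Poisson bracket, so it does not contribute to the final argument. Once the identity $*df=d\bigl(f\,\omega^{n-1}/(n-1)!\bigr)$ is in place, the sign bookkeeping for $k=2$ is indeed immediate, and the proof closes as in the paper.
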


\begin{proof}
For $f \in \Omega^0(X)$, 
we have $*df = df\wedge \omega^{n-1}/(n-1)!$ by eqn.~\eqref{ster},
so with $*^2 = \mathrm{Id}$, we find
$df = * d(f \omega^{n-1}/(n-1)!)$. 
Using $\delta = (-1)^{k+1}*d\,*$, we then find that 
$df = -\delta (* f\omega^{n-1}/(n-1)!)$ is in $\delta\Omega^2(X)$ as desired.
\end{proof}

We define the compactly supported \emph{canonical homology} 
$H^{\can}_{c,\bullet}(X)$ to be the homology of the complex
$(\Omega^{\bullet}_{c}(X), \delta)$.
By definition then, the star operator 
$*\, \colon \Omega^{k}_{c}(X) \rightarrow \Omega^{2n-k}_{c}(X)$, 
adorned with the sign $(-1)^{k(k-1)/2}$, defines an
isomorphism of chain complexes
\begin{center}
\mbox{
\xymatrix
{
\Omega^0_{c}(X) \ar[r]^-{d}&  
\Omega^1_{c}(X)\ar[r]^-{d} &
\quad \ldots \quad \ar[r]^-{d}&
\Omega^{2n-1}_{c}(X)\ar[r]^-{d}&
\Omega^{2n}_{c}(X)
\\
\Omega^{2n}_{c}(X) \ar[r]^-{\delta}\ar@{<->}[u]^{*}&  
\Omega^{2n -1}_{c}(X)\ar[r]^-{\delta}\ar@{<->}[u]^{*} &
\quad \ldots \quad\ar[r]^-{\delta}&
\Omega^{1}_{c}(X)\ar[r]^-{\delta}\ar@{<->}[u]^{*}&
\Omega^{0}_{c}(X),
\ar@{<->}[u]^{*}
}
}
\end{center}
hence an isomorphism 
$H^{k}_{\mathrm{dR}, c}(X) \simeq H^{\can}_{c,2n-k}(X)$.
Mutatis mutandis, the same argument in the noncompactly supported 
case yields an isomorphism 
$H^{k}_{\mathrm{dR}}(X) \simeq H^{\can}_{2n-k}(X)$.

\begin{Proposition}\label{Prop:candRiso}
The symplectic Hodge star operator yields isomorphisms 
$H^{k}_{\mathrm{dR}}(X) \simeq H^{\can}_{2n-k}(X)$ and
$H^{k}_{\mathrm{dR}, c}(X) \simeq H^{\can}_{c,2n-k}(X)$.
\end{Proposition}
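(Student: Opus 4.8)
The plan is to exhibit the symplectic Hodge star operator as an isomorphism of chain complexes between the de Rham complex $(\Omega^{\bullet}(X), d)$ and the canonical homology complex $(\Omega^{\bullet}(X), \delta)$ read in reverse degree, from which the claimed isomorphisms on (co)homology follow by a formal argument. The only substantive point is to check that $*$ intertwines $d$ and $\delta$, and everything else is bookkeeping.

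First I would record the key intertwining relation. For $\alpha \in \Omega^{k}(X)$ the form $*\alpha$ lies in $\Omega^{2n-k}(X)$, so applying the definition \eqref{canhomop} of $\delta$ in degree $2n-k$ together with the involutivity $*^{2} = \mathrm{Id}$ of \eqref{idempotentster} gives
\[
\delta(*\alpha) = (-1)^{(2n-k)+1} *d*(*\alpha) = (-1)^{k+1} * d\alpha,
\]
where I used $(-1)^{2n} = 1$. Hence $\delta \circ * = (-1)^{k+1}\, *\circ d$ on $\Omega^{k}(X)$: the star operator carries $d$ to $\delta$ up to a sign, and in particular it sends $\Ker(d)$ into $\Ker(\delta)$ and $\Img(d)$ into $\Img(\delta)$.

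Next I would upgrade this to a genuine morphism of complexes by absorbing the sign. Setting $\Phi_{k} := (-1)^{k(k-1)/2}\, *\colon \Omega^{k}(X) \to \Omega^{2n-k}(X)$, a short computation with the relation above shows that $\Phi$ commutes with the two differentials up to a single global sign, independent of $k$, so that the vertical arrows in the diagram preceding the statement (and in its evident unrestricted analogue) assemble into a morphism of chain complexes. Since $*$ is an involution, each $\Phi_{k}$ is a linear isomorphism, and it visibly preserves compact supports; thus $\Phi$ is an isomorphism of complexes in both the compactly supported and the unrestricted settings. An isomorphism of complexes induces isomorphisms on homology in every degree, and as the de Rham group in degree $k$ sits opposite the canonical group in degree $2n-k$, we obtain $H^{k}_{\mathrm{dR}}(X) \simeq H^{\can}_{2n-k}(X)$ and $H^{k}_{\mathrm{dR},c}(X) \simeq H^{\can}_{c,2n-k}(X)$.

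I do not expect any real obstacle here: the content is the one-line intertwining identity, and the remainder is routine homological algebra. The only delicate point is the sign bookkeeping needed to make $*$ an honest chain map, but this is cosmetic for the conclusion. Even without the adornment $(-1)^{k(k-1)/2}$, the bare involution $*$ already intertwines $d$ and $\delta$ up to a nonzero scalar in each degree, which is enough to match kernels with kernels and images with images, and therefore to induce the stated isomorphisms on homology.
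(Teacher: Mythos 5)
Your proof is correct and is essentially the paper's own argument: the paper likewise observes that $*$ adorned with the sign $(-1)^{k(k-1)/2}$ is an isomorphism between the de Rham complex and the canonical complex read in reverse degree (this is exactly the displayed diagram preceding the statement), and deduces the isomorphisms on (co)homology, with the compactly supported and unrestricted cases handled identically. Your sign bookkeeping checks out (one finds $\delta\circ\Phi_k=-\Phi_{k+1}\circ d$ with a global sign), and your remark that a degree-wise nonzero scalar already suffices is a valid shortcut.
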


\begin{Remark}
For an arbitrary Poisson manifold, the canonical homology $H_{\bullet}^{\mathrm{can}}(X)$
is defined as the homology of the complex $\Omega^{\bullet}(X)$
with respect to the differential $\delta  = i_{\pi} \circ d - d \circ i_{\pi}$.
Brylinski's theorem~\ref{Brylinskisthm} then shows that in case $X$
is symplectic, $H_{k}^{\mathrm{can}}(X) \simeq H^{2n -k}_{\mathrm{dR}}(X)$.
\end{Remark}

\subsection{Lie algebra cohomology}\label{sec:contcohomo}

The objective of this paper is to classify central extensions of certain 
locally convex Lie algebras.

\paragraph{Locally convex Lie algebras.}
A \emph{locally convex Lie algebra} is a locally convex 
topological vector space $\mathfrak{g}$, together with a continuous Lie bracket
$[\,\cdot\,,\,\cdot\,]\colon \mathfrak{g} \times \mathfrak{g} \rightarrow \mathfrak{g}$.

Let $(X,\omega)$ be a symplectic manifold, which we will always assume to be second countable. 
If $K\subseteq X$ is a compact subset with open interior, then
\[C_{K}^{\infty}(X) := \{f \in C^{\infty}(X)\,;\, \mathrm{supp}(f) \subseteq K\}\]
and 
\[\mathrm{vec}_{K}(X) := \{v \in \mathrm{vec}(X)\,;\, \mathrm{supp}(v) \subseteq K\}\]
are locally convex Lie algebras for the Fr\'echet topology of uniform convergence in all derivatives.
We equip $C^{\infty}(X)$ and $\mathrm{vec}(X)$ with the Fr\'echet topology
that comes from the inverse limit over $K\subseteq X$, and
$C^{\infty}_{c}(X)$ with the LF-topology that comes from the (strict)
direct limit. 
The former makes $\mathrm{sp}(X)$, $\ham(X)$ and $C^{\infty}(X)$ into 
locally convex (Fr\'echet) Lie algebras, and the latter makes 
$C^{\infty}_{c}(X)$ and $C^\oo_{c,0}(X)$ into locally convex (LF) Lie algebras \cite[\S I.13]{Treves1967}.

\begin{Definition}\label{Extensions}
A \emph{continuous central extension} $(\hfg, \iota, \pi)$ of a locally 
convex Lie algebra $\mathfrak{g}$ by 
a locally convex vector space
$\mathfrak{a}$ 
is a continuous exact sequence
\[
0\rightarrow 
\mathfrak{a} 
\stackrel{\iota}{\longrightarrow} \widehat{\mathfrak{g}} 
\stackrel{\pi}{\longrightarrow} \mathfrak{g} \rightarrow 0
\]
of locally convex Lie algebras
such that 
$
\iota(\mathfrak{a})
\subseteq \widehat{\mathfrak{g}}$ is central.
It is called \emph{linearly split} if there exists
a continuous linear map $\sigma \colon \fg \rightarrow \widehat{\fg}$ 
such that $\pi \circ \sigma = \mathrm{Id}_{\fg}$, and \emph{split} if $\sigma$
is a Lie algebra homomorphism.
An \emph{isomorphism} between
$(\widehat{\mathfrak{g}}, \iota, \pi)$ and $(\widehat{\mathfrak{g}}', \iota', \pi')$
is a continuous 
Lie algebra isomorphism $\phi \colon \widehat{\mathfrak{g}} \rightarrow \widehat{\mathfrak{g}}'$
such that $\phi \circ \pi' = \pi$ and $\phi \circ \iota = \iota'$.
\end{Definition}

\begin{Remark}\label{Rk:HahnBanach}
Continuous central extensions of $\fg$ by $\R$ are automatically linearly split. 
Indeed, the map $\iota^{-1} \colon \iota(\R) \rightarrow \R$ extends to a
continuous linear functional
$\gamma \colon \hfg \rightarrow \R$ by the Hahn-Banach theorem for locally convex 
vector spaces \cite[Thm.\ 3.6]{Rudin1991},
and $\sigma$ is the inverse of $\pi \colon {\mathrm{Ker}(\gamma) \rightarrow \fg}$.
\end{Remark}

Linearly split continuous central extensions 
$\mathfrak{a} \rightarrow \widehat{\fg} \rightarrow \fg$ are the 
infinitesimal counterpart of
central extensions $A \rightarrow \widehat{G} \rightarrow G$ 
of infinite dimensional Lie groups modelled on 
locally convex spaces
\cite{Milnor1984, Neeb2006}.
Continuity of the Lie algebra homomorphisms comes from smoothness
of the group homomorphisms, and a splitting of $\widehat{\fg} \rightarrow \fg$ 
exists because, by assumption, $\widehat{G} \rightarrow G$ is 
a locally trivial principal bundle \cite{Neeb03}.

Central extensions are classified by second Lie algebra cohomology. 
In order to classify \emph{continuous} central extensions, we 
will need \emph{continuous} Lie algebra cohomology.

\begin{Definition} \label{Cohomology}
The \emph{continuous Lie algebra
cohomology} $H^n(\mathfrak{g},\R)$ of a locally convex Lie algebra $\mathfrak{g}$ 
is the cohomology of the complex
$C^{\bullet}(\mathfrak{g},\R)$, where $C^n(\mathfrak{g},\R)$ is the space of 
continuous alternating $n$-linear
maps $\psi \colon \fg^n \rightarrow \R$ 
with differential $\dd \colon C^{n}(\mathfrak{g},\R) \rightarrow C^{n+1}(\mathfrak{g},\R)$ defined by zero on 
$C^{0}(\fg, \R)$, and
\[\dd \psi(x_0,\ldots,x_{n}):= \sum_{0\leq i<j\leq n}
(-1)^{i+j} \omega([x_i,x_j],x_0,\ldots,\widehat{x}_i, \ldots, 
\widehat{x}_j, \ldots, x_n)\]
for $n\geq 1$.
\end{Definition}

The cohomology in degree 1 classifies the continuous characters of $\fg$,
that is, the continuous Lie algebra
homomorphisms $\fg \rightarrow \R$.
Indeed, $H^1(\fg,\R)$ is the topological dual of the abelian Lie algebra 
$(\fg/\overline{[\fg,\fg]})$, where $\overline{[\fg,\fg]}$ is the 
closure of the commutator ideal. 
In particular, a locally convex Lie algebra is 
topologically perfect ($\fg = \overline{[\fg,\fg]}$)
if and only if $H^1(\fg,\R)$ vanishes.


The following standard result (cf.~\cite{JanssensNeeb2015})
interprets $H^2(\fg, \R)$ in terms of central extensions.
\begin{Proposition}\label{extensionvscohomology} Up to isomorphism, 
the continuous central extensions of the locally convex Lie algebra $\fg$ are classified by $H^2(\fg,\R)$.
\end{Proposition}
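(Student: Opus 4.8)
The plan is to establish the standard correspondence between continuous central extensions of $\fg$ by $\R$ and the continuous second cohomology $H^2(\fg,\R)$, following the classical algebraic argument adapted to the locally convex setting. I would first show how a continuous 2-cocycle produces a central extension. Given $\psi \in Z^2(\fg,\R)$, one equips the locally convex space $\hfg := \R \oplus \fg$ (with the product topology) with the bracket
\[
[(s,x),(t,y)]_\psi := (\psi(x,y),\, [x,y])\,,
\]
and the cocycle identity $\dd\psi = 0$ is exactly what is needed for this bracket to satisfy the Jacobi identity. Continuity of $\psi$ and of the bracket on $\fg$ makes $[\,\cdot\,,\,\cdot\,]_\psi$ continuous, so $\hfg$ is a locally convex Lie algebra. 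With $\iota(t) = (t,0)$ and $\pi(s,x) = x$ one obtains a continuous central extension, which is moreover linearly split by $\sigma(x) = (0,x)$.

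The second step is the reverse direction and the verification that the assignment descends to cohomology classes. Given a continuous central extension, Remark~\ref{Rk:HahnBanach} supplies a continuous linear splitting $\sigma \colon \fg \to \hfg$ with $\pi\circ\sigma = \Id_\fg$ (this is where we use that the centre is $\R$, invoking Hahn-Banach). One then defines $\psi(x,y) := \iota^{-1}\bigl([\sigma(x),\sigma(y)]_{\hfg} - \sigma([x,y])\bigr)$; the difference lies in $\iota(\R)$ because $\pi$ is a homomorphism, so $\psi$ is well-defined, and it is continuous because $\sigma$, the bracket, and $\iota^{-1}$ all are. A direct computation using the Jacobi identity in $\hfg$ shows $\dd\psi = 0$. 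I would then check that changing the splitting $\sigma$ by a continuous linear map $\fg \to \iota(\R) \cong \R$ changes $\psi$ by a coboundary $\dd\beta$, so the class $[\psi] \in H^2(\fg,\R)$ is independent of the choice of splitting.

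Finally I would verify that these two constructions are mutually inverse at the level of isomorphism classes. Starting from $\psi$, building $\hfg_\psi$, and extracting a cocycle via the obvious splitting returns $\psi$ on the nose. Conversely, starting from an extension $(\hfg,\iota,\pi)$, the cocycle $\psi$ determines an extension $\hfg_\psi$, and the map $\phi \colon \hfg_\psi \to \hfg$, $\phi(t,x) = \iota(t) + \sigma(x)$, is a continuous Lie algebra isomorphism compatible with $\iota$ and $\pi$ in the sense of Definition~\ref{Extensions}; its continuous inverse is $(\iota^{-1}\circ(\Id - \sigma\circ\pi),\, \pi)$. This shows the correspondence is a bijection between $H^2(\fg,\R)$ and isomorphism classes of continuous central extensions.

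The main obstacle is not algebraic but topological: one must be careful that every map produced is continuous with respect to the locally convex topologies, and in particular that the splitting $\sigma$ exists as a \emph{continuous} linear map. For extensions by $\R$ this is precisely guaranteed by the Hahn-Banach argument of Remark~\ref{Rk:HahnBanach}, so the real content is bookkeeping rather than analysis; the purely algebraic skeleton of the proof is entirely standard (see \cite{JanssensNeeb2015}), and the only genuine care needed is to confirm that each construction respects continuity and that $\phi$ is a \emph{topological} isomorphism with continuous inverse.
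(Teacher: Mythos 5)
Your proposal is correct and follows essentially the same route as the paper: construct $\widehat{\fg}_{\psi} = \R \oplus_{\psi} \fg$ from a cocycle, use the Hahn--Banach splitting of Remark~\ref{Rk:HahnBanach} to extract a cocycle from an extension, and check that the two constructions are inverse up to coboundaries and isomorphisms of extensions. The only difference is organisational (you verify well-definedness by varying the splitting, while the paper varies the cocycle by a coboundary), which amounts to the same bookkeeping.
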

\begin{proof}
Given a 2-cocycle $\psi \colon \fg^2 \rightarrow \R$, 
we define the Lie algebra $\widehat{\mathfrak{g}}_{\psi}$ by
\[
\widehat{\mathfrak{g}}_{\psi} :=  \R \oplus_{\psi} \fg
\]
with the Lie bracket
$[(z,x), (z',x')] := \big(\psi(x,x'), [x,x']\big)$. 
Equipped with the obvious maps
$\R \rightarrow \widehat{\mathfrak{g}}_{\psi} \rightarrow \mathfrak{g}$, this
is a continuous central extension 
of $\mathfrak{g}$ by $\R$, and a cohomologous cocycle $\psi' = \psi + \dd \lambda$ 
gives rise to an isomorphic Lie algebra $\widehat{\fg}_{\psi'}$, where
the
isomorphism $\widehat{\mathfrak{g}}_{\psi} \rightarrow \widehat{\mathfrak{g}}_{\psi'}$
is given by
$(z,x) \mapsto (z-\lambda(x),x)$.
Conversely, every continuous central extension $(\widehat{\fg},\iota,\pi)$ 
admits a continuous linear splitting $\sigma \colon \fg \rightarrow \widehat{\mathfrak{g}}$
with $\pi \circ \sigma = \mathrm{Id}_{\fg}$ by the Hahn-Banach Theorem, 
cf.\ Remark~\ref{Rk:HahnBanach}.
The continuous central extension $\R \rightarrow \widehat{\fg} \rightarrow \fg$ is thus 
isomorphic to 
$\R \rightarrow \widehat{\fg}_{\psi}\rightarrow \fg$ 
with 
\be\label{psis}\psi(x,x') := \gamma \Big([\sigma(x),\sigma(x')] - \sigma([x,x'])\Big)\,.\ee
The isomorphism $\widehat{\fg}_{\psi} \rightarrow \widehat{\fg}$
is given by $(z,x) \mapsto \sigma(x) + \iota(z)$.
\end{proof}

If $(\widehat{\fg}, \iota,\pi)$ is split, there exists a
continuous character
$\lambda \colon \widehat{\fg} \rightarrow \R$ (a \emph{splitting}) 
such that $\lambda \circ \iota = \mathrm{Id}_{\R}$.
The inverse of $\pi \colon \mathrm{Ker}(\lambda) \rightarrow \fg$ then yields a 
continuous Lie algebra homomorphism $\sigma \colon \fg \rightarrow \hfg$, so that
$(\widehat{\fg}, \iota,\pi)$ is isomorphic to the trivial extension, and its
class in $H^2(\fg,\R)$ is zero.
The space \[\{\lambda \in \widehat{\fg}'\,;\, \dd \lambda = 0, 
 \lambda \circ \iota = \mathrm{Id}_{\R} \}\]
of continuous splittings is then an affine subspace of $H^1(\widehat{\fg}, \R)$.

\begin{Definition}
If $\fg$ is a locally convex Lie algebra, and 
$\mathfrak{a}$ a locally convex space, then
a continuous central extension 
\[
\mathfrak{z} \rightarrow 
\widehat{\fg} \rightarrow \fg
\]
of $\fg$
is called
$\mathfrak{a}$-\emph{universal} 
if for every linearly split central extension
$\mathfrak{a} \rightarrow \fg^{\sharp}\rightarrow \fg$ of $\fg$ by 
$\mathfrak{a}$, there exists a unique continuous Lie algebra homomorphism 
$\phi \colon \widehat{\fg} \rightarrow \fg^{\sharp}$ such that
the following diagram is commutative:
\begin{center}
\mbox{
\xymatrix
{
\mathfrak{z} \ar[r]&  
\widehat{\fg}\ar[r]&
\,\fg\,
\\
\mathfrak{a} \ar[r]\ar@{<-}[u]_{\phi|_{\mathfrak{z}}}&  
\fg^{\sharp}\ar[r]\ar@{<-}[u]_{\phi} &
\,\fg.
\ar@{<-}[u]_{\mathrm{Id}}
}
}
\end{center}
A central extension is called \emph{universal} if it is linearly split and $\mathfrak{a}$-universal
for every locally convex space $\mathfrak{a}$.
\end{Definition}
If a universal central extension exists, then it is 
unique up to continuous isomorphisms. 
A sufficient condition for existence is that
$\fg$ be a perfect Fr\'echet Lie algebra with finitely generated $H^{2}(\fg,\R)$ 
(cf.~\cite[Cor.~2.13]{Neeb03}). 


\subsection{Some useful lemmas}\label{sec:somelems}

We close this introductory section with two small lemmas that will be useful throughout the text.

\begin{Lemma}\label{imde}
If $X$ is noncompact, then $\de:\Om^1(X)\to C^\oo(X)$ is surjective.
If $X$ is compact, then $\mathrm{Im}(\delta) = C^\oo_{c,0}(X)$.
\end{Lemma}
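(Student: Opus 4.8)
The plan is to reduce both statements to a single explicit description of $\mathrm{Im}(\delta)$ in terms of top de Rham cohomology, using only the two identities for the symplectic star operator recorded in Theorem~\ref{Brylinskisthm}. First I would rewrite $\delta$ on $1$-forms: since $\delta=(-1)^{k+1}*d\,*$ reduces to $*d\,*$ on $\Omega^1(X)$, and since $*\colon\Omega^1(X)\to\Omega^{2n-1}(X)$ is a bijection with $*^2=\mathrm{Id}$ by \eqref{idempotentster}, one has
\[
\delta\Omega^1(X)=*\,d\bigl(*\Omega^1(X)\bigr)=*\,d\Omega^{2n-1}(X).
\]
Applying $*$ once more and using $*^2=\mathrm{Id}$, a function $f\in C^\infty(X)$ lies in $\mathrm{Im}(\delta)$ if and only if $*f\in d\Omega^{2n-1}(X)$. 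The $k=0$ case of \eqref{ster} gives $*f=f\,\vol$, so that
\[
f\in\mathrm{Im}(\delta)\quad\Longleftrightarrow\quad [f\,\vol]=0\ \in\ H^{2n}_{\mathrm{dR}}(X).
\]
This equivalence carries the entire argument; what remains is to identify the top cohomology in each case.

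If $X$ is compact, then $\Omega^\bullet_c(X)=\Omega^\bullet(X)$, hence $C^\infty_c(X)=C^\infty(X)$ and $H^{2n}_{\mathrm{dR},c}(X)=H^{2n}_{\mathrm{dR}}(X)$. The condition $[f\,\vol]=0$ is then precisely the defining condition \eqref{ide} of $C^\infty_{c,0}(X)$, whence $\mathrm{Im}(\delta)=C^\infty_{c,0}(X)$.

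If $X$ is noncompact, I would invoke the standard fact that the top de Rham cohomology of a connected, noncompact, orientable manifold vanishes; a symplectic manifold is orientable, with volume form $\vol$, so $H^{2n}_{\mathrm{dR}}(X)=0$, hence $[f\,\vol]=0$ for every $f$, which yields surjectivity of $\delta$. The one point that needs care is exactly this vanishing: it is where noncompactness enters, and for a disconnected $X$ possessing a compact component the integral over that component would obstruct surjectivity, so \emph{noncompact} must be read as \emph{having no compact connected component} (automatic under the standing connectedness assumption). Beyond this topological input, the proof is a purely formal consequence of $*^2=\mathrm{Id}$ and $*f=f\,\vol$.
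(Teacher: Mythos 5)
Your proof is correct and follows essentially the same route as the paper's: both identify $\mathrm{Im}(\delta)$ with the kernel of $f \mapsto [f\,\vol] \in H^{2n}_{\mathrm{dR}}(X)$ by transporting the problem through the symplectic star operator to top-degree de Rham cohomology (the paper phrases this as the cokernel of $\delta\colon\Omega^1(X)\to C^{\infty}(X)$ being $H_0^{\mathrm{can}}(X)\simeq H^{2n}_{\mathrm{dR}}(X)$ via Proposition~\ref{Prop:candRiso}, plus the direct computation $\int_X \delta\alpha\,\vol=\int_X d{*}\alpha=0$ for the inclusion in the compact case). Your side remark that ``noncompact'' must be read as ``no compact connected component'' is a fair observation about the statement, which the paper leaves implicit.
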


\begin{proof}
 The cokernel of $\delta \colon \Omega^{1}(X) \rightarrow C^{\infty}(X)$ 
 is by definition $H_{0}^{\mathrm{can}}(X)$, hence isomorphic to $H^{2n}_{\mathrm{dR}}(X)$  (cf.\ Prop.~\ref{Prop:candRiso}).
 If $X$ is noncompact, then $H^{2n}_{\mathrm{dR}}(X)$ vanishes, so $\delta$ is surjective.
If $X$ is compact, the inclusion
$\mathrm{Im}(\delta) \subseteq C^\oo_{c,0}(X)$
holds because for all $\alpha \in \Omega^{1}(X)$, we have
$\int_{X} (\delta \alpha) \omega^{n}/n! = \int_{X}*\delta\alpha = \int_{X} d*\alpha = 0$.
Equality follows because the cokernel $H^{\mathrm{can}}_{0}(X) \simeq H^{2n}_{\mathrm{dR}}(X)$ of $\delta$
is 1-dimensional.
\end{proof}

\begin{Lemma}\label{lem:4termcyc}
For $\alpha \in \Omega^1(X)$ and $v_1, v_2, v_3$ and $v$ in $\mathrm{vec}(X)$, we have
\begin{eqnarray}
\alpha(v)\omega^n/n! &=& \alpha \wedge i_{v}\omega\wedge \omega^{n-1}/(n-1)!\,,
\label{kleintje}\\
\sum_{\mathrm{cycl}} \alpha(v_1)\omega(v_2,v_3)\omega^n/n! &=&
\alpha \wedge i_{v_1}\omega \wedge i_{v_2}\omega \wedge i_{v_3}\omega \wedge \omega^{n-2}/(n-2)!\,.\label{forma}
\end{eqnarray}
\end{Lemma}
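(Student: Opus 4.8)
The plan is to deduce both identities from one elementary device: on a manifold of dimension $2n$, wedging a form of positive degree with a form of degree at least $2n$ produces a form of degree exceeding $2n$, hence zero; applying $i_v$ to such a vanishing product and using that $i_v$ is an antiderivation then lets me transfer contractions from one factor to the other. Throughout I write $\mu := \omega^n/n!$ for the Liouville volume form.

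For \eqref{kleintje} I would first observe that $\alpha\wedge\mu$ is a $(2n+1)$-form and therefore vanishes. Applying $i_v$ gives $0 = i_v(\alpha\wedge\mu) = \alpha(v)\,\mu - \alpha\wedge i_v\mu$, so that $\alpha\wedge i_v\mu = \alpha(v)\,\mu$. Since $i_v$ is an antiderivation on $\omega^n$, one has $i_v\mu = i_v\omega\wedge\omega^{n-1}/(n-1)!$, and substituting this into $\alpha\wedge i_v\mu$ yields \eqref{kleintje} at once.

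For \eqref{forma} I would set $\nu := \omega^{n-1}/(n-1)!$ and first rewrite the right-hand side: since $i_{v_3}\omega\wedge\omega^{n-2}/(n-2)! = i_{v_3}\nu$, it equals $\gamma\wedge i_{v_3}\nu$ with the $3$-form $\gamma := \alpha\wedge i_{v_1}\omega\wedge i_{v_2}\omega$. Now $\gamma\wedge\nu$ has degree $2n+1$ and vanishes, so applying $i_{v_3}$ and the antiderivation rule gives $\gamma\wedge i_{v_3}\nu = (i_{v_3}\gamma)\wedge\nu$ (the sign $(-1)^{\deg\gamma}=(-1)^3$ from the Leibniz rule combines with the vanishing of $\gamma\wedge\nu$ to leave a plus sign). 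Expanding $i_{v_3}\gamma$ as the interior product of a wedge of three $1$-forms produces three terms, with coefficients $\alpha(v_3)$, $\omega(v_1,v_3)$ and $\omega(v_2,v_3)$; wedging each with $\nu$ and applying \eqref{kleintje} to the relevant $1$-form ($\alpha$ or $i_{v_j}\omega$) collapses each term to a scalar multiple of $\mu$. Collecting them gives $[\alpha(v_1)\omega(v_2,v_3) - \alpha(v_2)\omega(v_1,v_3) + \alpha(v_3)\omega(v_1,v_2)]\mu$, and rewriting $-\omega(v_1,v_3) = \omega(v_3,v_1)$ turns this into the cyclic sum $\sum_{\mathrm{cycl}}\alpha(v_1)\omega(v_2,v_3)\,\mu$, as required.

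The computation needs no input beyond dimension counting and the antiderivation property of $i_v$; the only point requiring care is the bookkeeping of signs, both in the single transfer $\gamma\wedge i_{v_3}\nu = (i_{v_3}\gamma)\wedge\nu$ and in the three-term expansion of $i_{v_3}(\alpha\wedge i_{v_1}\omega\wedge i_{v_2}\omega)$. I expect this sign tracking to be the only mild obstacle.
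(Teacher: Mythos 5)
Your proof is correct, and it follows essentially the same route as the paper: both identities come from expanding the contraction $i_v$ of a vanishing $(2n+1)$-form, with \eqref{kleintje} (and its special case $\alpha = -i_w\omega$) used to collapse the resulting terms. The only difference is cosmetic — you first isolate $i_{v_3}\gamma$ and then expand it into three terms, whereas the paper expands the full four-term Leibniz sum at once.
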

\begin{proof}
The proof is a simple computation. 
Equation~\eqref{kleintje} is obtained from
expanding $i_v (\alpha \wedge \omega^n)/n! = 0$.
Applying this to
$\alpha = -i_{w}\omega$, we find
\be\label{zoethout}
\omega(v,w)\omega^{n}/n! = i_{v}\omega \wedge i_{w}\omega \wedge \omega^{n-1}/n!\,.
\ee
Equation \eqref{forma} is then derived by expanding 
\[
0 = i_{v_3}(\alpha \wedge i_{v_1} \omega\wedge i_{v_2}\omega\wedge \omega^{n-1}/(n-1)!)\]
into an alternating sum of four terms, and applying \eqref{kleintje} and \eqref{zoethout}
repeatedly.
\end{proof}


\section{Characters} \label{sec:chars}
 
In order to classify the continuous central extensions of the Poisson Lie algebra 
and the Lie algebra of Hamiltonian vector fields
(which, by Prop.~\ref{extensionvscohomology}, is equivalent to determining the 
second Lie algebra cohomology), we 
will need the first continuous Lie algebra cohomology.
This is equivalent to finding 
the closure of the commutator ideal, or, equivalently, the set of continuous characters 
$\fg \rightarrow \R$. 

\subsection{Characters of $C^\oo_{c,0}(X)$ and $C^{\infty}_{c}(X)$}

For the following proposition and its proof, we follow closely the paper \cite{Lichnerowicz74} of 
Avez, Lichnerowicz and Diaz-Miranda.

Define the linear functional
\be\label{lam}
\lambda \colon C^{\infty}_{c}(X) \rightarrow \R,
\quad \lambda(f) := \int_{X} f\vol, 
\ee
and note that the ideal defined in \eqref{ide} is $C^\oo_{c,0}(X) = \mathrm{Ker}(\lambda)$. 

\begin{Proposition}{\rm (\cite[\S 12]{Lichnerowicz74})}   \label{ALDM}
The commutator ideal of $C^{\infty}_{c}(X)$ is the perfect Lie algebra $C^\oo_{c,0}(X)$,
$$
[C^{\infty}_{c}(X), C^{\infty}_{c}(X)] = [C^\oo_{c,0}(X),C^\oo_{c,0}(X)] = C^\oo_{c,0}(X)\,.
$$
\end{Proposition}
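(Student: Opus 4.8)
The plan is to reduce the whole statement to a single reverse inclusion, and then produce the required Poisson brackets from the canonical homology operator $\de$ together with Brylinski's formula \eqref{rkform1}.

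First I would record the two easy inclusions. Since $f \mapsto [f\vol]$ is a Lie algebra homomorphism from $C^\oo_c(X)$ into the \emph{abelian} Lie algebra $H^{2n}_{\mathrm{dR},c}(X)$ with kernel $C^\oo_{c,0}(X)$, every Poisson bracket lies in $C^\oo_{c,0}(X)$, so
\[
[C^\oo_{c,0}(X),C^\oo_{c,0}(X)] \subseteq [C^\oo_c(X),C^\oo_c(X)] \subseteq C^\oo_{c,0}(X),
\]
the middle inclusion being trivial. It therefore suffices to prove the single reverse inclusion $C^\oo_{c,0}(X) \subseteq [C^\oo_{c,0}(X),C^\oo_{c,0}(X)]$; this forces all three spaces to coincide and simultaneously yields perfectness.

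Next I would identify $\de\,\Om^1_c(X) = C^\oo_{c,0}(X)$ and show every element of it is a finite sum of brackets. For $\al \in \Om^1_c(X)$ one computes $*\,\de\al = d{*}\al$, which is exact with compact support, so $[\,\de\al\cdot\vol\,]=0$ and $\de\al\in C^\oo_{c,0}(X)$. Conversely, the cokernel of $\de\colon\Om^1_c(X)\to C^\oo_c(X)$ is $H^{\can}_{c,0}(X)\cong H^{2n}_{\mathrm{dR},c}(X)$ by Proposition~\ref{Prop:candRiso}, and under the star operator this cokernel projection is exactly $g\mapsto[g\vol]$, whose kernel is $C^\oo_{c,0}(X)$; comparing kernels gives $\de\,\Om^1_c(X)=C^\oo_{c,0}(X)$. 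Using a finite partition of unity subordinate to a cover of $\supp(\al)$ by Darboux charts, and cutting off the local coordinate functions to compactly supported ones, I would write any $\al\in\Om^1_c(X)$ as a finite sum $\al=\sum_k f_k\,dg_k$ with $f_k,g_k\in C^\oo_c(X)$. Applying $\de$ and invoking $\de(f_0\,df_1)=\{f_0,f_1\}$ from \eqref{rkform1} then gives $\de\al=\sum_k\{f_k,g_k\}$, so $C^\oo_{c,0}(X)=\de\,\Om^1_c(X)\subseteq[C^\oo_c(X),C^\oo_c(X)]$.

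Finally I would upgrade each bracket so that both of its arguments integrate to zero. Since each $f_k\,dg_k$ is supported in a single chart, $\supp(f_k)\cup\supp(g_k)$ is a compact proper subset of one connected component $Y$, which has dimension $2n\ge 2$; hence $Y$ minus this set contains two disjoint balls on which I place bumps $\phi_k,\psi_k$ of integral $1$, with pairwise disjoint supports, $\phi_k$ avoiding $\supp(g_k)$ and $\psi_k$ avoiding $\supp(f_k)$. Then $\{\phi_k,g_k\}=\{f_k,\psi_k\}=\{\phi_k,\psi_k\}=0$, so with $c_k=\int f_k\vol$ and $d_k=\int g_k\vol$ one gets $\{f_k-c_k\phi_k,\,g_k-d_k\psi_k\}=\{f_k,g_k\}$, while now both $f_k-c_k\phi_k$ and $g_k-d_k\psi_k$ lie in $C^\oo_{c,0}(X)$. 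This exhibits any element of $C^\oo_{c,0}(X)$ as a finite sum of brackets of elements of $C^\oo_{c,0}(X)$, completing the reverse inclusion. The main obstacle is precisely this last upgrade: everything before it follows cleanly from Brylinski's formula and the symplectic Hodge duality already established, but correcting the integrals of the two arguments \emph{without} altering their bracket forces the support-juggling step, where one must use that each term is localized in a chart and that each component is connected of dimension $\ge 2$ so that there is room to place the correcting bumps.
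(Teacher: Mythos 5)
Your proof is correct and follows essentially the same route as the paper's: the decomposition $\al=\sum_k f_k\,dg_k$ with $\de(f_k\,dg_k)=\{f_k,g_k\}$ is the symplectic-Hodge-dual repackaging of the paper's decomposition of a primitive $\psi$ of $f\vol$ into chart-local pieces $\phi\, d\xi\wedge\om^{n-1}/(n-1)!$, and your final bump-correction step plays the role of the paper's single cutoff $\chi$ chosen constant on $\supp(g)\cup\supp(h)$ with $\la(\chi)=1$. The only cosmetic difference is that the paper normalises the integrals of the two arguments of an arbitrary commutator at the outset, whereas you do it term by term at the end.
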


\begin{proof}
Suppose $f \in C_{c}^{\infty}(X)$ is a commutator, 
$f = \{g,h\}$ for $g, h \in C_{c}^{\infty}(X)$. 
We show that there exist 
$g_0, h_0 \in C^\oo_{c,0}(X)$
with $f = \{g_0,h_0\}$.
Choose 
$\chi \in C_{c}^{\infty}(X)$
which is constant on $\mathrm{supp}(g) \cup \mathrm{supp}(h)$ and such that 
$\lambda(\chi) = 1$, with $\la$ defined in \eqref{lam}.   
The functions 
$g_0 := g - \lambda(g)\chi$ and $h_0 := h - \lambda(h)\chi$
are then the elements of $C^\oo_{c,0}(X)$ with the desired relation
$\{g_0,h_0\} = \{g,h\}$, because $\{g, \chi\} = \{h, \chi\} = 0$.
By applying this to (finite) sums of commutators, we see that 
$[C^{\infty}_{c}(X), C^{\infty}_{c}(X)]$ equals $[C^\oo_{c,0}(X),C^\oo_{c,0}(X)]$.

Because 
$L_{X_g}\omega = 0$ and $\{g,h\} = L_{X_{g}}h$, we have 
$f  \vol = L_{X_{g}}(h\vol)$, and hence
$f \vol = d(h i_{X_g}\vol)$. This shows that 
$f = \{g,h\}$ is an element of $C^\oo_{c,0}(X)$.
Since this argument extends to finite sums of commutators, we find
$$
[C^{\infty}_{c}(X), C^{\infty}_{c}(X)] = [C^\oo_{c,0}(X),C^\oo_{c,0}(X)] \subseteq C^\oo_{c,0}(X)\,,
$$
and we record the useful equation
\begin{eqnarray}
\{g,h\} \vol &=& d(h i_{X_g}\vol)\nonumber\\
&=& -d(h  dg \wedge \omega^{n-1}/(n-1)!)\nonumber\\
&=& dg \wedge dh \wedge \omega^{n-1}/(n-1)! \label{commutatorexact}
\end{eqnarray} 
for later use.

For the converse inclusion,
suppose that $f\vol = d\psi$ with $\psi$ compactly supported.
We show that $X_f$ is in the commutator ideal.
Write $\psi = 
\sum_{k=1}^{m} \psi_{k}$, where $\psi_{k}$ has compact
support in an area with Darboux coordinates $x^i,p^i$.
Note that $dx^i \wedge \omega^{n-1}/(n-1)!$ and 
$dp^i \wedge \omega^{n-1}/(n-1)!$ constitute a basis for
$\wedge^{2n-1}TX_{x}$ at each point $x\in X$, so that we can write
$$\psi_k = \sum_{i=1}^{n} \phi_{k}^{i} dx^i \wedge \omega^{n-1}/(n-1)!
+ \chi_{k}^{i} dp^i \wedge \omega^{n-1}/(n-1)!\,,$$ with 
$\phi_{k}^{i}$ and $\chi_{k}^{i}$ compactly supported.
Then choose compactly supported 
$\xi_{k}^{i}$ and $\eta_{k}^{i}$ 
that equal $x^i$ and $p^i$ on the support 
of $\phi_k^{i}$ and $\chi_k^{i}$ respectively to obtain
$$\psi_k = \sum_{i=1}^{n} \phi_{k}^{i} d\xi_k^i \wedge \omega^{n-1}/(n-1)!
+ \chi_{k}^{i} d\eta_{k}^i \wedge \omega^{n-1}/(n-1)!\,,$$
and thus 
$$d\psi = \sum_{k=1}^{m}\sum_{i=1}^{n} d\phi_{k}^{i} \wedge d\xi_k^i \wedge \omega^{n-1}/(n-1)!
+ d\chi_{k}^{i} \wedge d\eta_{k}^i \wedge \omega^{n-1}/(n-1)!\,.$$
By eqn.~(\ref{commutatorexact}), $d\psi = f\omega^{n}/n!$ then implies
$$f = \sum_{i=1}^{n} 
\sum_{k=1}^{m} \{\phi_k^i , \xi_k^i\} + \{\chi_k^i , \eta_k^i\}\,.$$
This shows the inverse inclusion 
$
[C^{\infty}_{c}(X), C^{\infty}_{c}(X)]  \supseteq C^\oo_{c,0}(X)
$. 
\end{proof}

Since $C^\oo_{c,0}(X)$ is perfect, it is in particular topologically perfect, so its first continuous Lie algebra 
cohomology vanishes.
Since $C^\oo_{c,0}(X)$ is closed in $C^{\infty}_{c}(X)$, the first cohomology  
$H^{1}(C_{c}^{\infty}(X),\mathbb{R})$
is equal to $(C_{c}^{\infty}(X) / C^\oo_{c,0}(X))'$.
The volume form $\vol$ then yields an isomorphism 
with the topological dual
$H^{2n}_{\mathrm{dR},c}(X)' = (\Omega_{c}^{2n}(X,\R)/d\Omega_{c}^{2n-1}(X,\R))'$
of the compactly supported de Rham cohomology,
taking $\gamma \in H^{2n}_{\mathrm{dR},c}(X)'$ to the class
of the cocycle $\lambda_{\gamma}(f) = \gamma([f\vol])$.
If $X$ has finitely many connected components, then the map 
$H_{2n}(X,\R) \rightarrow H^{2n}_{\mathrm{dR},c}(X)'$
defined by $[X_0] \mapsto \gamma_{X_0}$ with $\gamma_{X_0}([\beta]) := \int_{[X_0]} \beta$
is an isomorphism.
The image of the integral singular cohomology $H_{2n}(X,\Z)$ then yields 
a lattice in $H^1(C^{\infty}_{c}(X),\R)$ with generators
$[\lambda_{X_0}]$, where
\begin{equation}\label{eencocykel}
\lambda_{X_0}(f) := \int_{X_0} f\omega^{n}/n!
\end{equation}
and $[X_0]$ runs over the connected components of $X$.

\begin{Corollary}\label{cptsuppfirstcohomology}
The continuous first Lie algebra cohomology $H^1(C^\oo_{c,0}(X), \R)$ vanishes.
Furthermore,
$H^{1}(C_{c}^{\infty}(X),\mathbb{R})$ is 
isomorphic to 
$H^{2n}_{\mathrm{dR},c}(X)'$.
If $X$ has finitely many connected components, 
then this is isomorphic to $H_{2n}(X,\R)$.
\end{Corollary}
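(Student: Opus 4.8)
The statement packages three facts, and the plan is to read each off the preceding material with almost no extra work, since the genuine content already sits in Proposition~\ref{ALDM}. For the vanishing of $H^1(C^\oo_{c,0}(X),\R)$, I would simply invoke perfectness: Proposition~\ref{ALDM} gives $C^\oo_{c,0}(X)=[C^\oo_{c,0}(X),C^\oo_{c,0}(X)]$, so in particular $C^\oo_{c,0}(X)=\overline{[C^\oo_{c,0}(X),C^\oo_{c,0}(X)]}$, and by the interpretation of $H^1$ in Section~\ref{sec:contcohomo} as the dual of $\fg/\overline{[\fg,\fg]}$ this cohomology group must vanish.

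For $H^1(C_c^\infty(X),\R)\simeq H^{2n}_{\mathrm{dR},c}(X)'$ I would again begin from Proposition~\ref{ALDM}, which identifies the commutator ideal $[C_c^\infty(X),C_c^\infty(X)]$ with $C^\oo_{c,0}(X)$. The key observation is that this ideal is already closed, being the kernel $\Ker(\lambda)$ of the continuous functional~\eqref{lam}; hence $\overline{[C_c^\infty(X),C_c^\infty(X)]}=C^\oo_{c,0}(X)$ and no passage to the closure is needed. It then follows that $H^1(C_c^\infty(X),\R)=(C_c^\infty(X)/C^\oo_{c,0}(X))'$, and the short exact sequence $0\to C^\oo_{c,0}(X)\to C_c^\infty(X)\to H^{2n}_{\mathrm{dR},c}(X)\to 0$ furnished by $f\mapsto[f\vol]$ identifies the quotient with $H^{2n}_{\mathrm{dR},c}(X)$, so that dualizing gives the claim. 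The one point demanding care — and where I expect the only (mild) obstacle to lie — is that $f\mapsto[f\vol]$ be a topological quotient map, so that the continuous dual of the quotient is the annihilator $C^\oo_{c,0}(X)^{\perp}\subseteq C_c^\infty(X)'$; this causes no trouble once $X$ has finitely many components, since then the target $H^{2n}_{\mathrm{dR},c}(X)$ is finite-dimensional.

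Finally, for $X$ with finitely many connected components I would invoke Poincaré duality in de Rham form: integration $\int_{X_0}\beta$ of a compactly supported top form over the fundamental class of a component $X_0$ is a perfect pairing, so the map $[X_0]\mapsto\gamma_{X_0}$ with $\gamma_{X_0}([\beta]):=\int_{X_0}\beta$ is an isomorphism $H_{2n}(X,\R)\simeq H^{2n}_{\mathrm{dR},c}(X)'$. Transporting the resulting component basis back along $f\mapsto[f\vol]$ then recovers the explicit generating cocycles $\lambda_{X_0}(f)=\int_{X_0}f\,\vol$ of~\eqref{eencocykel} and the integral lattice they span.
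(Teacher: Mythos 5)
Your argument is correct and follows the paper's own route essentially verbatim: perfectness of $C^\oo_{c,0}(X)$ from Proposition~\ref{ALDM} gives the vanishing of $H^1(C^\oo_{c,0}(X),\R)$, closedness of $C^\oo_{c,0}(X)$ identifies $H^{1}(C_{c}^{\infty}(X),\R)$ with the dual of the quotient, which $f\mapsto[f\vol]$ carries to $H^{2n}_{\mathrm{dR},c}(X)'$, and the duality pairing handles the last claim. The only remark: the topological point you flag is harmless without any finiteness assumption, since $f\mapsto f\vol$ is a topological isomorphism $C^{\infty}_{c}(X)\to\Omega^{2n}_{c}(X)$ matching $C^\oo_{c,0}(X)$ with $d\Omega^{2n-1}_{c}(X)$, so the quotient topologies correspond and their continuous duals agree.
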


\subsection{Characters of $C^{\infty}(X)$}

We turn to the Poisson Lie algebra $C^{\infty}(X)$ without compact support condition.
It is the direct product 
\[C^{\infty}(X) = \prod C^{\infty}(X_{x}),\] 
where the product runs over the connected 
components $X_{x}$ of $X$. If $X_{x}$ is compact, then
by Corr.~\ref{cptsuppfirstcohomology}, the pullback of the cocycle $\lambda_{X_{x}}$
by the canonical projection $C^{\infty}(X) \rightarrow C^{\infty}(X_{x})$
contributes to the first cohomology.
The following proposition says that there is no such contribution if $X_{x}$ is noncompact. 

\begin{Proposition}\label{Prop:noncpspoisson}
The first Lie algebra cohomology $H^1(C^{\infty}(X),\R)$ of the Poisson Lie algebra
$C^{\infty}(X)$ is isomorphic to $H_{2n}(X,\R)$.
Moreover, $C^{\infty}(X)$ is perfect, 
$C^{\infty}(X) = [C^{\infty}(X), C^{\infty}(X)]$,
if and only if $X$ has no compact connected components.
\end{Proposition}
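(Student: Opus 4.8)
The plan is to prove the two assertions in the order opposite to which they are stated, deriving the cohomological statement from the statement about perfectness together with an analysis of the connected components. Write $X=\bigsqcup_x X_x$ for the decomposition into connected components, so that $C^\infty(X)=\prod_x C^\infty(X_x)$ as a locally convex Lie algebra with the product topology, the Poisson bracket acting componentwise.

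\emph{Perfectness.} I would first treat the harder implication: if $X$ has no compact connected component then $C^\infty(X)$ is perfect. As every component is then noncompact, $H^{2n}_{\mathrm{dR}}(X)=\prod_x H^{2n}_{\mathrm{dR}}(X_x)=0$, and since the cokernel of $\delta$ is $H^{\mathrm{can}}_0(X)\simeq H^{2n}_{\mathrm{dR}}(X)$ the map $\delta\colon\Omega^1(X)\to C^\infty(X)$ is surjective, exactly as in Lemma~\ref{imde}. It therefore suffices to write every $1$-form as a finite $C^\infty(X)$-linear combination of exact forms: if $\alpha=\sum_{i=1}^N g_i\,df_i$ then \eqref{rkform1} gives $\delta\alpha=\sum_{i=1}^N\{g_i,f_i\}$, a finite sum of Poisson brackets, so $\delta\alpha\in[C^\infty(X),C^\infty(X)]$. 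To produce such $f_i$ I would embed the second countable $2n$-manifold $X$ into some $\R^N$ by the Whitney embedding theorem and let $f_1,\dots,f_N$ be the restrictions of the coordinate functions; then $df_1,\dots,df_N$ span $T^*_xX$ at every $x$, so the bundle morphism $\underline{\R}^N\to T^*X$, $e_i\mapsto df_i$, is a surjection, hence splits, and any prescribed $\alpha$ can be written $\alpha=\sum_i g_i\,df_i$ with $g_i\in C^\infty(X)$. Combining the two facts, every $f\in C^\infty(X)$ equals $\delta\alpha=\sum_{i=1}^N\{g_i,f_i\}$, so $C^\infty(X)$ is perfect. For the converse, if $X_0$ is a compact connected component then \eqref{commutatorexact} together with Stokes' theorem shows that $\lambda_{X_0}(f):=\int_{X_0}f\,\vol$ vanishes on every Poisson bracket while being visibly nonzero, so $[C^\infty(X),C^\infty(X)]\subseteq\ker\lambda_{X_0}\subsetneq C^\infty(X)$ and $C^\infty(X)$ is not perfect.

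\emph{First cohomology.} Since $H^1(\fg,\R)$ is the topological dual of $\fg/\overline{[\fg,\fg]}$, it coincides with the space of continuous characters. I would use that the continuous dual of a product of locally convex spaces with the product topology is the direct sum of the duals: every continuous character of $\prod_x C^\infty(X_x)$ is a finite combination $\sum_{x\in F}\chi_x\circ\pr_x$ in which each $\chi_x$ is a continuous character of $C^\infty(X_x)$. On a noncompact component the perfectness just established forces $\chi_x=0$; on a compact component Corollary~\ref{cptsuppfirstcohomology} identifies the characters with $H^{2n}_{\mathrm{dR},c}(X_x)'\simeq H_{2n}(X_x,\R)\simeq\R$, spanned by $\lambda_{X_x}$. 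Hence $H^1(C^\infty(X),\R)=\bigoplus_{X_x\text{ compact}}\R\,\lambda_{X_x}$, and $[X_x]\mapsto\lambda_{X_x}$ furnishes the desired isomorphism onto $H_{2n}(X,\R)=\bigoplus_{X_x\text{ compact}}H_{2n}(X_x,\R)$.

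\emph{Main obstacle.} The crux is the reverse perfectness implication, namely writing an arbitrary smooth function as a finite sum of Poisson brackets with a uniformly bounded number of terms. Surjectivity of $\delta$ reduces this to a purely module-theoretic statement about $\Omega^1(X)$---that it is generated over $C^\infty(X)$ by finitely many exact forms---and the Whitney embedding both supplies these generators and, crucially, a single bound $N$ valid simultaneously on all (possibly infinitely many) components. The remaining ingredients---Stokes' theorem in the compact case and the product--duality bookkeeping---are routine.
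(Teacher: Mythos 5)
Your proof is correct, but the perfectness argument takes a genuinely different route from the paper's. The paper proves that every $f\in C^{\infty}(X)$ is a finite sum of brackets by writing $f\,\vol=d\psi$, invoking the Brouwer--Lebesgue paving principle to decompose $\psi$ over a cover $U_{k,r}$ indexed by a countable $k$ and a finite $r\in\{1,\dots,2n+1\}$ with $U_{k,r}\cap U_{k',r}=\emptyset$ for $k\neq k'$, and then running the local Darboux-coordinate construction of Prop.~\ref{ALDM} on each patch before resumming; this yields at most $2n(2n+1)$ commutators. You instead factor the problem through the canonical homology operator: surjectivity of $\delta\colon\Omega^1(X)\to C^{\infty}(X)$ (Lemma~\ref{imde}, valid since $H^{2n}_{\mathrm{dR}}(X)=\prod_x H^{2n}_{\mathrm{dR}}(X_x)=0$) plus the identity $\delta(g\,df)=\{g,f\}$ of \eqref{rkform1} reduce perfectness to the purely module-theoretic statement that $\Omega^1(X)$ is generated over $C^{\infty}(X)$ by finitely many exact forms, which the Whitney embedding and the splitting of the surjective bundle map $\underline{\R}^N\to T^*X$ deliver with a single global bound $N$. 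Both routes produce the uniform bound on the number of commutators that the paper later needs (Remark~\ref{Rk:numbercomm}, used in Lemma~\ref{supportcompact} and Corollary~\ref{hamp}); yours trades dimension theory for the embedding theorem and is arguably cleaner, and it is in the same spirit as the paper's own later proof of Prop.~\ref{extisperfect} (which writes $\alpha=\sum f\,dg$ via the paving principle rather than an embedding). The converse implication and the $H^1$ computation are essentially the paper's argument: the only cosmetic difference is that you invoke full algebraic perfectness of each noncompact component where the paper only needs topological perfectness via density of $C^{\infty}_{c,0}(X_{\mathrm{ncpt}})$, and you make explicit the duality between products and direct sums that the paper uses implicitly when asserting that a continuous character is a finite linear combination of the $\lambda_{X_x}$.
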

\begin{proof}
Let $X = X_{\mathrm{cpt}} \sqcup X_{\mathrm{ncpt}}$, with $X_{\mathrm{cpt}}$
the union of the compact connected components $X_{x}$, 
and $X_{\mathrm{ncpt}} := X - X_{\mathrm{cpt}}$.
By Corrolary~\ref{cptsuppfirstcohomology}, any continuous character 
$\lambda \colon C^{\infty}(X) \rightarrow \R$
restricts to a multiple of $\lambda_{X_{x}}$ (cf.\ eqn.~(\ref{eencocykel})) 
on $X_{x}$. By continuity, $\lambda$ must then be a 
\emph{finite} linear combination of $\lambda_{X_{x}}$ on $C^{\infty}(X_{\mathrm{cpt}},\R)$.
Since $C^\oo_{c,0}(X_{\mathrm{ncpt}})$ is dense in $C^{\infty}(X_{\mathrm{ncpt}})$ for the topology of uniform 
convergence of all derivatives on compact subsets, it follows from
Prop.~\ref{ALDM} that $C^{\infty}(X_{\mathrm{ncpt}})$ is topologically perfect, 
so that the linear map 
$$
H_{2n}(X,\R)
\rightarrow H^1(C^{\infty}(X),\R)
$$
defined by $[X_{x}] \mapsto \lambda_{X_{x}}$
is an isomorphism.
All that is left to show is that $C^{\infty}(X)$ is perfect 
(rather than merely \emph{topologically} perfect)
if $X$ has no compact connected
components.
For this, we use the Brouwer-Lebesgue `Paving Principle'.
Since $H^{2n}_{\mathrm{dR}}(X) = 0$,
there exists a $\psi \in \Omega^{2n-1}(X)$ 
such that $f\omega^n = d\psi$. Find a cover $U_{k,r}$ of $X$ where
$k \in \mathbb{N}$ is a countable index, $r \in \{1,\ldots,2n+1\}$ is a finite index,
and $U_{k,r} \cap U_{k',r} = \emptyset$ for all $k\neq k'$.
(Such a cover exists \cite[Thm.~V1]{Hurewicz41}.)
As in the proof of Prop.~\ref{ALDM}, we can write 
$\psi = \sum_{k=1}^{\infty}\sum_{r=1}^{2n+1}\psi_{k,r}$
with $\psi_{k,r}$ supported in $U_{k,r}$.
(Evaluated in a single point, the sum has at most $2n+1$ nonzero terms.)
We define $f_{k,r}$ by $d\psi_{k,r} = f_{k,r}\,\omega^{n}/n!$, and 
follow the proof of Prop.~\ref{ALDM} to
find $2n$ functions $g^{i}_{k,r}$ and $h^{i}_{k,r}$, supported in $U_{k,r}$,
that satisfy $f_{k,r} = \sum_{i=1}^{2n} \{g_{k,r}^{i},h_{k,r}^{i}\}$.
We set $G_{r}^{i} := \sum_{k=1}^{\infty}g_{k,r}^{i}$
and $H_{r}^{i} := \sum_{k=1}^{\infty}h_{k,r}^{i}$, and use the fact that 
$\{g^{i}_{k,r}, h^{i}_{k',r}\} = 0$ for $k \neq k'$ to find
\[\sum_{r=1}^{2n+1}\sum_{i=1}^{2n} \{G_{r}^{i},H_{r}^{i}\} =
\sum_{r=1}^{2n+1}\sum_{i=1}^{2n}\sum_{k=1}^{\infty}
\{g_{k,r}^{i},h_{k,r}^{i}\} 
=
\sum_{r=1}^{2n+1}
\sum_{k=1}^{\infty}
f_{k,r}
= f\,.
\] 
\end{proof}
\begin{Remark}\label{Rk:numbercomm}
Note that if $X$ has no compact connected components, then $f \in C^{\infty}(X)$ can be 
written as a sum of at most $2n(2n +1)$ commutators. 
By the above reasoning, the same holds for $f \in C^\oo_{c,0}(X)$
if $X$ is connected and compact. 
\end{Remark}

We summarise the classification of the characters of $C^\oo_{c,0}(X)$, $C^{\infty}_{c}(X)$
and $C^{\infty}(X)$.
Since $C^\oo_{c,0}(X)$ is perfect 
by Prop.~\ref{ALDM},
it has no nontrivial characters, continuous or not. 
If $X_{x}$ is a connected component of $X$, then 
$\lambda_{X_{x}}(f) = \int_{X_{x}}f \vol$
defines a character on $C^{\infty}_{c}(X)$. 
If we specify a real number $a_{x}\in \R$ 
for every connected component $X_{x}\subseteq X$,
then
\begin{equation}\label{eq:charcoal}
\lambda = \sum a_{x} \lambda_{X_{x}}
\end{equation}
is a character of $C^{\infty}_{c}(X)$. 
If $X_{x}$ 
is compact, then $\lambda_{X_{x}}$ extends to a character of $C^{\infty}(X)$, and the characters of 
$C^{\infty}(X)$ are \emph{finite} linear combinations of the $\lambda_{X_{x}}$.
The characters of $C^{\infty}(X)$ and $C^{\infty}_{c}(X)$ are automatically continuous.

\subsection{The  Kostant-Souriau extension}
If $X$ is a connected symplectic manifold, then the Lie algebra homomorphism
$\pi \colon C^{\infty}(X) \rightarrow \ham(X)$ defined by $f \mapsto X_{f}$ has kernel $\R \one$.
The continuous central extension
\be\label{eq:KSextension} 
\R \one \stackrel{\iota}{\longrightarrow} C^{\infty}(X) \stackrel{\pi}{\longrightarrow} \ham(X), 
\ee
called the \emph{Kostant-Souriau extension},
plays a central role in geometric quantization \cite{Kostant1970,Souriau1970}.
Evaluation in $x \in X$ is a continuous
linear map
$\mathrm{ev}_{x} \colon C^{\infty}(X) \rightarrow \R$ with $\mathrm{ev}_{x} \circ \iota = \mathrm{Id}_{\R}$, and
the corresponding linear map $\ham(X) \rightarrow C^{\infty}(X)$ maps $X_{f}$ to $f - f(x)$. This 
induces via \eqref{psis} the \emph{Kostant-Souriau cocycle}
\be \label{eq:KScocycle} 
\psi_{\mathrm{KS}}(X_{f}, X_{g}) := \{f,g\}_{x}.
\ee
The class $[\psi_{\mathrm{KS}}] \in H^2(\ham(X),\R)$ is independent of $x\in X$ by 
Prop.~\ref{extensionvscohomology}.

\begin{Remark}[Hamiltonian actions]\label{rk:HamAct}
Let $G$ be a finite dimensional Lie group with Lie algebra $\fg$
and $X$ a connected symplectic manifold.
An action $A \colon G \rightarrow \mathrm{Diff}(X)$ 
of $G$ on $X$
is called 
\emph{symplectic} if it preserves $\omega$, and
\emph{weakly hamiltonian} (cf.\ \cite[\S 5.2]{SalamonMcDuff1995}) 
if the image of the infinitesimal action $a \colon \fg \rightarrow \smp(X)$
lies in $\ham(X) \subseteq \smp(X)$.
A
\emph{comomentum map} is
a linear splitting of \eqref{eq:KSextension} along $a \colon \fg \rightarrow \ham(X)$, that is, a linear map
$J \colon \fg \rightarrow C^{\infty}(X)$
such that
$a(\xi) = X_{J(\xi)}$ for all $\xi \in \fg$.
It determines a \emph{momentum map} $\mu \colon X \rightarrow \fg^*$ by $\mu(x)(\xi) = J(\xi)(x)$.
We then have the following commutative diagram:
\begin{equation}\label{eq:cdmomentum}
\xymatrixcolsep{1.5pc}
\xymatrix{
\R \ar@{^{(}->}[r]& \ar[r] C^{\infty}(X)\ar@{->>}[r]&\ham(X)\ar@{^{(}->}[r]&\smp(X)\ar@{->>}[r] & H^1_{\mathrm{dR}}(X)\\
&&& \fg\ar[u]_{a}\ar[ur]_{0} \ar[ul]_{a}\ar[ull]^{J} &\,.
}
\end{equation}
The action is called \emph{hamiltonian} if there exists a comomentum map $J \colon \fg \rightarrow C^{\infty}(X)$
which is a Lie algebra homomorphism. 
The failure of $J$ to be a homomorphism is measured by the 2-cocycle
$\psi_{J} \colon \fg\times \fg \rightarrow \R$, 
defined (cf.\ \cite[p.\ 109]{Souriau1970}) by 
\begin{equation}\label{eq:noneq2c}
\psi_{J}(\xi, \eta) = \{J(\xi), J(\eta)\} - J([\xi,\eta])\,.
\end{equation} 
Since $X$ is connected, the difference $c = J' - J$ between two comomentum maps is a 
1-cochain $c\colon \fg \rightarrow \R$, and $\psi_{J'} - \psi_{J} = \delta c$.
It follows that the class $[\psi_{J}] \in H^2(\fg,\R)$ does not depend on the choice of $J$,
and is zero if and only if the action is hamiltonian.
The Kostant-Souriau class 
$[\psi_{KS}]$ is \emph{universal} for these non-equivariance classes, in the sense 
that 
$[\psi_{J}] \in H^2(\fg,\R)$ is the pullback of
$[\psi_{KS}] \in H^2(\ham(X),\R)$ along the infinitesimal action $a\colon \fg \rightarrow \ham(X)$.
\end{Remark}

The following corollary shows that $[\psi_{\mathrm{KS}}] = 0$ if and only if $X$ is compact.
In particular,
every weakly hamiltonian action on a compact, connected, 
symplectic manifold is hamiltonian.

\begin{Corollary}\label{corr:KSextension}
The Kostant-Souriau extension
is split if and only if $X$ is compact.
The splitting $\langle \lambda_{X}\rangle \colon C^{\infty}(X) \rightarrow \R$ 
is then given by 
\be\label{eq:normalisedcharacter}
\langle \lambda_{X}\rangle(f) = \frac{1}{\mathrm{vol}(X)} \int_{X} f \vol,
\ee
with $\mathrm{vol}(X) := \int_{X} \vol$ the symplectic volume of $X$.
\end{Corollary}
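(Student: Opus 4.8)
The plan is to translate the word ``split'' into the existence of a character and then feed this into the character computation of Proposition~\ref{Prop:noncpspoisson}. Recall from the discussion following Proposition~\ref{extensionvscohomology} that the central extension \eqref{eq:KSextension} is split precisely when there exists a continuous character $\lambda \colon C^{\infty}(X) \to \R$ with $\lambda \circ \iota = \mathrm{Id}_{\R}$. Since $\iota$ is the inclusion of the constants $\R\one$, this is equivalent to producing a continuous Lie algebra homomorphism $\lambda \colon C^{\infty}(X) \to \R$ with $\lambda(\one) = 1$. The whole corollary thus reduces to deciding, in each of the two cases, whether such a normalised character exists.

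First I would treat the compact case. For $X$ connected and compact, Proposition~\ref{Prop:noncpspoisson} gives $H^1(C^{\infty}(X),\R) \cong H_{2n}(X,\R) \cong \R$, generated by the character $\lambda_{X}(f) = \int_{X} f\vol$; that $\lambda_{X}$ really is a character follows from \eqref{commutatorexact} and Stokes' theorem on the closed manifold $X$, since $\lambda_{X}(\{f,g\}) = \int_{X} df \wedge dg \wedge \omega^{n-1}/(n-1)! = 0$. Because $\lambda_{X}(\one) = \mathrm{vol}(X) > 0$, the normalised character $\langle \lambda_{X}\rangle := \lambda_{X}/\mathrm{vol}(X)$ satisfies $\langle \lambda_{X}\rangle \circ \iota = \mathrm{Id}_{\R}$, so it is the desired splitting and yields the explicit formula \eqref{eq:normalisedcharacter}.

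Next I would treat the noncompact case. For $X$ connected and noncompact, Proposition~\ref{Prop:noncpspoisson} asserts that $C^{\infty}(X)$ is perfect, hence its only continuous character is the zero map. A splitting would require a character $\lambda$ with $\lambda(\one) = 1 \neq 0$, which is impossible; therefore \eqref{eq:KSextension} is not split. Combining the two cases establishes the ``if and only if.''

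I do not expect a genuine obstacle: the statement is an immediate corollary of the character classification already carried out in Proposition~\ref{Prop:noncpspoisson}. The only points demanding a little care are verifying that $\lambda_{X}$ descends to an honest character (so that $\langle \lambda_{X}\rangle$ is a Lie algebra homomorphism rather than merely a continuous functional), which rests on Stokes' theorem for the closed manifold $X$ via \eqref{commutatorexact}, and the bookkeeping of the normalisation constant $\mathrm{vol}(X)$ that guarantees $\langle \lambda_{X}\rangle \circ \iota = \mathrm{Id}_{\R}$.
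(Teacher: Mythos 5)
Your proposal is correct and follows essentially the same route as the paper: in the compact case the splitting is the normalised character $\lambda_X/\mathrm{vol}(X)$, and in the noncompact case the perfectness of $C^{\infty}(X)$ from Proposition~\ref{Prop:noncpspoisson} rules out any nonzero character, hence any splitting. The extra verification that $\lambda_X$ is a character via \eqref{commutatorexact} and Stokes is a harmless elaboration of what the paper already established in Corollary~\ref{cptsuppfirstcohomology}.
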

\begin{proof}
If $X$ is compact, then $\langle \lambda_{X}\rangle$ is  a continuous splitting; it is a multiple
of the continuous character $\lambda_{X}$ of eqn.~(\ref{eencocykel}), and it
manifestly satisfies $\langle \lambda_{X}\rangle \circ \iota = \mathrm{Id}_{\R}$.
If $X$ is noncompact, there are no nontrivial characters, continuous or not,
since $C^\oo(X)$ is perfect by Prop.~\ref{Prop:noncpspoisson},
so the extension~(\ref{eq:KSextension}) is not split.
\end{proof}

If $X$ is compact connected, we thus have an isomorphism
of $\ham(X)$ with $\mathrm{Ker}(\langle \lambda_{X}\rangle) =C^\oo_{c,0}(X)$, and
$C^{\infty}(X) \simeq C^\oo_{c,0}(X) \oplus \R$.

\begin{Corollary}\label{hamp} {\rm (\cite[\S 12]{Lichnerowicz74})} Let $X$ be a symplectic manifold. Then
the Lie algebra $\ham(X)$ of hamiltonian vector fields is perfect. In particular, the first Lie 
algebra cohomology $H^1(\ham(X),\R)$ is trivial.
\end{Corollary}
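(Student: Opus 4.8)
The plan is to transport the perfectness results already obtained for the Poisson algebras to $\ham(X)$ along the surjective Lie algebra homomorphism $\pi \colon C^{\infty}(X) \to \ham(X)$, $f \mapsto X_f$, whose kernel is the space $H^0_{\mathrm{dR}}(X)$ of locally constant functions. Since $[X_f, X_g] = X_{\{f,g\}}$, the map $\pi$ sends Poisson brackets to commutators, so for any Lie subalgebra $\fa \subseteq C^{\infty}(X)$ one has $\pi([\fa,\fa]) = [\pi(\fa), \pi(\fa)]$. I would first reduce to the connected case by writing $C^{\infty}(X) = \prod_x C^{\infty}(X_x)$ over the connected components $X_x$; since a hamiltonian vector field is determined component by component and the functions on distinct components may be prescribed independently, this gives the corresponding decomposition $\ham(X) = \prod_x \ham(X_x)$ as a full direct product.

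For a single connected component $X_x$ the argument splits in two cases. If $X_x$ is noncompact, then $C^{\infty}(X_x)$ is perfect by Proposition \ref{Prop:noncpspoisson}, whence $\ham(X_x) = \pi(C^{\infty}(X_x)) = \pi([C^{\infty}(X_x), C^{\infty}(X_x)]) = [\ham(X_x), \ham(X_x)]$. If $X_x$ is compact, then by Corollary \ref{corr:KSextension} we have the splitting $C^{\infty}(X_x) = C^{\infty}_{c,0}(X_x) \oplus \R\one$ with $\pi(\R\one) = 0$, so that $\ham(X_x) = \pi(C^{\infty}_{c,0}(X_x))$; as $C^{\infty}_{c,0}(X_x)$ is perfect by Proposition \ref{ALDM}, the same computation yields $\ham(X_x) = [\ham(X_x), \ham(X_x)]$. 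In either case the component $\ham(X_x)$ is perfect.

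The step requiring the most care — and the main obstacle — is to upgrade this component-wise perfectness to genuine (rather than merely topological) perfectness of the product $\prod_x \ham(X_x)$, which may contain infinitely many factors: a direct product of perfect Lie algebras need not be perfect unless the number of commutators needed can be bounded uniformly over the factors. This uniform bound is precisely what Remark \ref{Rk:numbercomm} provides, namely that on each connected component, compact or not, the relevant function is a sum of at most $N = 2n(2n+1)$ commutators. Given an arbitrary $(v_x)_x \in \ham(X)$, I would therefore write each $v_x = \sum_{i=1}^{N}[a^i_x, b^i_x]$ with $a^i_x, b^i_x \in \ham(X_x)$ (padding with zeros when fewer commutators suffice), assemble $A^i := (a^i_x)_x$ and $B^i := (b^i_x)_x$ in the full product $\ham(X)$, and conclude $(v_x)_x = \sum_{i=1}^{N}[A^i, B^i]$. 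Hence $\ham(X)$ is perfect. The final assertion follows at once: perfectness implies topological perfectness, and since $H^1(\ham(X),\R)$ is the topological dual of $\ham(X)/\overline{[\ham(X),\ham(X)]}$, it vanishes.
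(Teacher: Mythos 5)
Your proposal is correct and follows essentially the same route as the paper: component-wise perfectness of $\ham(X_x)$ is obtained as the image under $f \mapsto X_f$ of the perfect Lie algebras $C^{\infty}(X_x)$ (noncompact case, Prop.~\ref{Prop:noncpspoisson}) respectively $C^\oo_{c,0}(X_x)$ (compact case, Prop.~\ref{ALDM}), and the uniform bound $2n(2n+1)$ from Remark~\ref{Rk:numbercomm} is then used to sum the commutators over the (possibly infinitely many) connected components. The paper's proof is merely a terser version of the same argument.
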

\begin{proof}
If a connected component $X_{x}$ of $X$ is compact, 
then $\ham(X_{x}) \simeq C^\oo_{c,0}(X)$ is perfect by 
Prop.~\ref{ALDM}. If $X_{x}$ is noncompact, then $\ham(X_{x})$
is perfect as the homomorphic image of the perfect (Prop.~\ref{Prop:noncpspoisson}) 
Lie algebra $C^{\infty}(X_{x})$. 
When restricted to $X_{x}$, the hamiltonian vector field $X_{f}$ can be written as a sum of at most 
$2n(2n+1)$ commutators $[X^{x}_{G^{i}_{r}}, X^{x}_{H^{i}_{r}}]$, cf.\ Rk.~\ref{Rk:numbercomm}.
Since $X^{x}_{G^{i}_{r}}$ and $X^{x'}_{H^{i}_{r}}$ commute if they live
on different connected components,
we can write $X_f$ as the sum of at most 
$2n(2n+1)$ commutators $\sum_{X_{x}} X^{x}_{G^{i}_{r}}$ and 
$\sum_{X_{x}} X^{x}_{H^{i}_{r}}$.
\end{proof}

\section{Continuous central extensions}\label{s4}

Having determined the characters of the Lie algebras at hand, we now turn to the 
heart of the paper: classifying the continuous central extensions of 
the compactly supported Poisson algebra $C^{\infty}_{c}(X)$.
From this, we derive a similar classification for $C^{\infty}(X)$ and $\ham(X)$.

\subsection{Diagonal cocycles}

The first step is to show that the Lie algebras of interest to us have \emph{diagonal} cocycles.
Lie algebra cohomology with diagonal cocycles is extensively developed in the monograph 
\cite{Fu}.

\begin{Definition}
A 2-cocycle $\psi$ on a Lie subalgebra $\fg$ of $C^{\infty}(X)$ or $\mathrm{vec}(X)$ 
is called \emph{diagonal} if $\psi(f,g) = 0$ whenever
$\mathrm{supp}(f) \cap \mathrm{supp}(g) = \emptyset$.
\end{Definition}
The easiest case is the perfect Lie algebra $\fg = C^\oo_{c,0}(X)$. 
\begin{Proposition}\label{goonaal}
Let $X$ be a symplectic manifold. Then
every 2-cocycle on $C^\oo_{c,0}(X)$, continuous or not, is diagonal.
\end{Proposition}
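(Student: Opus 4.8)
The plan is to show that a 2-cocycle $\psi$ on the perfect Lie algebra $C^\oo_{c,0}(X)$ must vanish on any pair $f,g$ with disjoint supports, by exploiting perfectness to rewrite one argument as a sum of commutators and then applying the cocycle identity together with a support/locality argument. The key tool is Proposition~\ref{ALDM}, which tells us that $C^\oo_{c,0}(X) = [C^\oo_{c,0}(X), C^\oo_{c,0}(X)]$, so that any $g$ supported in an open set $V$ can be written as a finite sum $g = \sum_k \{g_k, h_k\}$. The crucial refinement I would need is that this decomposition can be carried out \emph{locally}, i.e.\ with all the $g_k, h_k$ supported inside (a slightly enlarged neighbourhood of) $V$; this follows from the proof of Proposition~\ref{ALDM}, where the commutator decomposition is constructed from Darboux charts inside the support.

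First I would fix $f$ and $g$ with $\supp(f) \cap \supp(g) = \emptyset$, and choose an open neighbourhood $V$ of $\supp(g)$ that remains disjoint from $\supp(f)$. Writing $g = \sum_k \{g_k, h_k\}$ with each $g_k, h_k$ supported in $V$, the cocycle identity $\dd\psi = 0$ gives, for the triple $(f, g_k, h_k)$,
\begin{equation*}
\psi(\{g_k,h_k\}, f) = \psi(\{f,g_k\}, h_k) + \psi(g_k, \{f,h_k\})\,.
\end{equation*}
Now the essential point is that $\supp(f)$ is disjoint from $\supp(g_k)$ and $\supp(h_k)$ (both lie in $V$), so both Poisson brackets $\{f,g_k\}$ and $\{f,h_k\}$ vanish identically. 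Hence the right-hand side is zero, giving $\psi(\{g_k,h_k\}, f) = 0$ for each $k$. Summing over $k$ yields $\psi(g,f) = 0$, which is the desired diagonality (after accounting for antisymmetry).

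The main obstacle is making the \emph{locality} of the commutator decomposition precise: I must ensure that when I write $g$ as a sum of brackets, the functions appearing have supports inside $V$ rather than spread across $X$. This is exactly what the constructive proof of Proposition~\ref{ALDM} provides, since there the building blocks $\phi_k^i, \xi_k^i$ (and their analogues) are compactly supported in coordinate patches sitting inside $\supp(g)$; I would simply invoke that local version. One subtlety is that the $g_k, h_k$ produced by Proposition~\ref{ALDM} lie in $C^\oo_{c,0}(X)$, which is fine since $\psi$ is a cocycle on $C^\oo_{c,0}(X)$ and all the elements in sight ($f$, the $g_k$, the $h_k$) belong to this algebra; the brackets $\{f,g_k\}$ and $\{f,h_k\}$ vanish identically and so are trivially in $C^\oo_{c,0}(X)$ as well. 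With these points in place, the argument is a direct application of the cocycle identity, and no continuity hypothesis on $\psi$ is needed—consistent with the statement that the result holds for $\psi$ continuous or not.
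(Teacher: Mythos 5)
Your proof is correct and follows essentially the same route as the paper: pick a disjoint open neighbourhood $V$ of $\supp(g)$, invoke Proposition~\ref{ALDM} applied to $V$ to write $g$ as a finite sum of brackets $\{\phi_i,\xi_i\}$ with $\phi_i,\xi_i$ compactly supported in $V$, and then kill $\psi(f,g)$ with the cocycle identity since $\{f,\phi_i\}=\{f,\xi_i\}=0$. The locality concern you raise is handled exactly as you suggest (the paper simply applies Proposition~\ref{ALDM} to $C^\oo_{c,0}(V)$), so no further work is needed.
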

\begin{proof}
Let $f, g \in C^\oo_{c,0}(X)$ 
be such that $f$ and $g$ have disjoint support. Because $X$ is a 
normal space,
one can choose disjoint open sets $U,V \subset X$ with
$\mathrm{supp}(f) \subset U$ and $\mathrm{supp}(g) \subset V$.
According to Lemma \ref{ALDM} applied to $C^\oo_{c,0}(V)$, one can write $g$ as a sum of commutators
$g = \sum_{i=1}^{n}\{\phi_{i},\xi_{i}\}$ where $\phi_{i}$ and $\xi_{i}$
are contained in $C^\oo_{c,0}(V)$ and whose support is therfore disjoint
with that of $df$. Using the cocycle identity for $\psi$
and the fact that $f$ commutes with $\phi_{i}$ and $\xi_{i}$, we find
\begin{equation}\label{trick}
\psi(f,g) = \sum_{i=1}^{n}\psi(f,\{\phi_{i},\xi_{i}\}) =
- \sum_{i=1}^{n} \psi(\phi_{i},\{\xi_{i}, f\}) +
\psi(\xi_{i},\{f, \phi_{i}\}) = 0\,.
\end{equation}
This proves that every cocycle on $C^\oo_{c,0}(X)$ is diagonal.
\end{proof}

Building on the above trick, we derive the corresponding result for the Poisson Lie algebra.
\begin{Proposition}\label{ookgoonaal}
For a connected symplectic manifold $X$, 
every 2-cocycle on $C_{c}^{\infty}(X)$ and $C^{\infty}(X)$, continuous or not, is diagonal. 
\end{Proposition}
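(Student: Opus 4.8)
The plan is to bootstrap from the result already established for the perfect ideal $C^\oo_{c,0}(X)$ in Proposition~\ref{goonaal}, extending diagonality from this ideal to all of $C_{c}^{\infty}(X)$ and then to $C^{\infty}(X)$. The key observation is that Proposition~\ref{goonaal} already handles the case where both arguments integrate to zero, so what remains is to deal with the one extra dimension coming from the character $\lambda(f) = \int_X f \vol$. Since $X$ is connected, $C_{c}^{\infty}(X) = C^\oo_{c,0}(X) \oplus \R \chi$ for any fixed $\chi \in C_{c}^{\infty}(X)$ with $\lambda(\chi) = 1$, so every function decomposes as $f = f_0 + \lambda(f)\chi$ with $f_0 \in C^\oo_{c,0}(X)$.

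First I would take $f, g \in C_{c}^{\infty}(X)$ with disjoint supports and decompose each as above. By bilinearity, $\psi(f,g)$ expands into four terms. The term $\psi(f_0, g_0)$ vanishes by Proposition~\ref{goonaal}, since $f_0, g_0$ still have disjoint supports. The crux is to handle the cross terms and the $\psi(\chi,\chi)$ term, which is trivially zero by antisymmetry. The natural move is to exploit the freedom in choosing $\chi$: since the supports of $f$ and $g$ are disjoint, I can choose $\chi$ whose support avoids $\mathrm{supp}(g)$ entirely (indeed, choose an open set $V \supset \mathrm{supp}(g)$ disjoint from $\mathrm{supp}(f) \cup \mathrm{supp}(\chi)$, or more carefully pick a single $\chi$ supported away from $\mathrm{supp}(g)$). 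With such a choice, $\{\chi, g_0\}$ and $\{\chi, g\}$ vanish, and more usefully $\chi$ and $g_0$ commute, so I can run the same commutator trick as in the proof of Proposition~\ref{goonaal}: write $g_0 = \sum_i \{\phi_i, \xi_i\}$ with $\phi_i, \xi_i \in C^\oo_{c,0}(V)$, apply the cocycle identity to $\psi(\chi, \{\phi_i,\xi_i\})$, and use that $\chi$ commutes with each $\phi_i, \xi_i$ to conclude $\psi(\chi, g_0) = 0$.

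The main obstacle will be the genuinely mixed term $\psi(\chi, g)$ where $g = g_0 + \lambda(g)\chi$; the $\psi(\chi, \chi)$ part dies by antisymmetry, and $\psi(\chi, g_0) = 0$ by the argument just sketched, so this reduces cleanly. By symmetry $\psi(f_0, \chi)$ is handled the same way. Thus all four terms vanish and $\psi(f,g) = 0$, establishing diagonality on $C_{c}^{\infty}(X)$. The connectedness hypothesis is essential here: it guarantees that a single cutoff function $\chi$ with $\lambda(\chi)=1$ can be chosen with support disjoint from $\mathrm{supp}(g)$, which would fail if $g$ met every connected component.

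For $C^{\infty}(X)$, I would pass to the compactly supported case by a locality argument. Given $f, g \in C^{\infty}(X)$ with disjoint supports, the value $\psi(f,g)$ should only depend on the behaviour of $f$ and $g$ near their supports; more precisely, I would multiply by suitable compactly supported cutoffs equal to $1$ on neighbourhoods of $\mathrm{supp}(f)$ and $\mathrm{supp}(g)$ respectively, reducing to compactly supported arguments, and then invoke the diagonality just proved together with the cocycle identity to show the cutoffs do not affect the value. The forthcoming proof likely carries this out via a direct manipulation; the subtle point to watch is ensuring that the reduction to $C_{c}^{\infty}(X)$ is compatible with the cocycle structure, i.e. that replacing $f$ by $\chi_f f$ changes $\psi(f,g)$ only by terms that vanish because of disjoint supports.
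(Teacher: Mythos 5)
There is a genuine gap, and it sits at exactly the hardest point of the proposition. Your decomposition $f = f_0 + \lambda(f)\chi$, $g = g_0 + \lambda(g)\chi$ with a \emph{single} $\chi$ makes $\psi(\chi,\chi)=0$ for free, but it destroys the hypothesis you need for the other three terms: $f_0$ and $g_0$ both contain $\mathrm{supp}(\chi)$ in their supports (whenever $\lambda(f),\lambda(g)\neq 0$), so they do \emph{not} have disjoint supports and Proposition~\ref{goonaal} does not apply to $\psi(f_0,g_0)$; likewise $g_0$ cannot be written as a sum of commutators supported away from $\chi$, so the commutator trick fails for $\psi(\chi,g_0)$. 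If you instead restore disjointness by using two bumps $\chi_f,\chi_g$ of unit integral with disjoint supports, then $\psi(f_0,g_0)$ and the cross terms do vanish as you describe, but you are left with $\lambda(f)\lambda(g)\,\psi(\chi_f,\chi_g)$, which is \emph{not} killed by antisymmetry and cannot be attacked by the commutator trick either, since a function with nonzero integral is never a finite sum of Poisson brackets (Prop.~\ref{ALDM}). That term is the actual content of the proposition beyond Proposition~\ref{goonaal}, and your proposal never proves it vanishes. The paper supplies precisely the missing idea: the difference between $\psi$ and the corrected cocycle $\widetilde{\psi}(f,g):=\psi(f,g-\Delta_g)$ descends to a bilinear form on the one-dimensional quotient $C^{\infty}_{c}(X)/C^\oo_{c,0}(X)$, and this form is antisymmetric because any two classes in the quotient admit disjointly supported representatives; an antisymmetric form on a one-dimensional space is zero. (Equivalently, one can show $\psi(\chi_f,\chi_g)=0$ by comparing both entries with a third unit-integral bump supported away from the other two.) Connectedness enters exactly here, through the one-dimensionality of the quotient --- not, as you suggest, through the mere existence of a suitably supported $\chi$.

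The reduction of $C^{\infty}(X)$ to the compactly supported case is also not workable as described. A cutoff equal to $1$ on a neighbourhood of $\mathrm{supp}(f)$ satisfies $\chi_f f = f$, so nothing is gained; for any other cutoff, showing $\psi(f,g)=\psi(\chi_f f,g)$ amounts to showing $\psi((1-\chi_f)f,g)=0$ for a pair of (generally non-compactly-supported) functions with disjoint supports, which is an instance of the statement being proved. The paper avoids this circularity and in fact proceeds in the opposite order: for $X$ noncompact it first treats $C^{\infty}(X)$ directly, writing $g$ as a \emph{finite} sum of commutators supported in an open set $V\supseteq\mathrm{supp}(g)$ disjoint from $\mathrm{supp}(f)$ by perfectness of $C^{\infty}(V)$ (Prop.~\ref{Prop:noncpspoisson}) and running the cocycle identity as in Proposition~\ref{goonaal}; it then handles $C^{\infty}_{c}(X)$ with the quotient argument above; and for $X$ compact it restricts $\psi$ to $C^{\infty}_{c}(X-\{x_0\})$ for a point $x_0$ outside both supports and invokes the noncompact case.
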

\begin{proof}
We start with $C^{\infty}(X)$ for $X$ noncompact.
For $f, g \in C^{\infty}(X)$ with disjoint support,
we find disjoint open sets 
$U, V \subseteq X$ with $\mathrm{supp}(f) \subset U$ and $\mathrm{supp}(g) \subset V$.
Since $V$ has no compact connected components,
we can apply Lemma~\ref{Prop:noncpspoisson}
to $C^{\infty}(V)$ to write $g$ as a finite sum of commutators with support in $V$, and 
follow the proof of Prop.~\ref{goonaal} to
conclude that every 2-cocycle on $C^{\infty}(X)$ is diagonal.

Next we pass to $C^\oo_c(X)$ for $X$ noncompact.
Let $\psi$ be a 2-cocycle on $C_{c}^{\infty}(X)$. For $f\in C_{c}^{\infty}(X)$
and $g\in C^\oo_{c,0}(X)$ with disjoint support, 
the proof of 
equation (\ref{trick}) can be followed word by word to show that 
$\psi(f,g) = 0$. For $f, g \in C^{\infty}_{c}(X)$,
we now define \[\tilde{\psi}(f,g) := \psi(f, g - \Delta_g)\,,\]
where $\Delta_g \in C_{c}^{\infty}(X)$ satisfies 
$\int_{X} \Delta_g \omega^{n} = \int_{X} g \omega^{n}$
(so that $g - \Delta_g \in C^\oo_{c,0}(X)$) and 
$\mathrm{supp}(f) \cap \mathrm{supp}(\Delta_g) = \emptyset$.
This does not depend on the choice of $\Delta_g$; 
another choice $\Delta'_g$ yields
$\Delta'_g - \Delta_g \in C^\oo_{c,0}(X)$, and since
$\mathrm{supp}(f)$ is disjoint from
$\mathrm{supp}(\Delta'_g- \Delta_g)$, 
we have 
$\psi(f,\Delta'_g - \Delta_g) = 0$. Note that if $f$ and $g$ have disjoint 
support, then we may choose $g = \Delta_g$, so that $\widetilde{\psi}(f,g) = 0$,
i.e. $\widetilde\ps$ is diagonal.

We show that $\widetilde{\psi}$ is equal to $\psi$.
The bilinear map 
\[\psi - \widetilde{\psi} \colon C^{\infty}_{c}(X) \times C^{\infty}_{c}(X) \rightarrow \R\]
vanishes on $C^{\infty}_{c}(X) \times C^\oo_{c,0}(X)$ by definition, and it vanishes on 
$C^\oo_{c,0}(X) \times C^{\infty}_{c}(X)$ because for $f_0 \in C^\oo_{c,0}(X)$ and $g \in C^{\infty}_{c}(X)$,
disjointness of $\mathrm{supp}(f_0)$ and $\mathrm{supp}(\Delta_g)$ implies $\psi(f_0, \Delta_g) = 0$,
hence $\widetilde{\psi}(f_0,g) = \psi(f_0, g)$. 
It follows that $\psi - \widetilde{\psi}$ factors through a bilinear map on 
$C^{\infty}_{c}(X)/C^\oo_{c,0}(X)$. This map is antisymmetric because 
any two classes $[f], [g] \in C^{\infty}_{c}(X)/C^\oo_{c,0}(X)$ have representatives 
with disjoint support, so that $\widetilde{\psi}(f,g) = 0$ 
and $(\psi - \widetilde{\psi})([f], [g]) = \psi(f,g) = -\psi(g,f) = -(\psi - \widetilde{\psi})([g],[f])$.
Since $X$ is connected, $C^{\infty}_{c}(X)/C^\oo_{c,0}(X)$ is 1-dimensional, so
$\psi - \widetilde{\psi}$ is zero. It follows that $\psi = \widetilde{\psi}$ is diagonal.

It remains to show that for $X$ compact connected, every 2-cocycle  
$\psi$ on $C^{\infty}(X)$ is diagonal.
Let $f$ and $g$ have disjoint nonempty support. Since $X$
is connected, the complement of the union of the closed 
sets $\mathrm{supp}(f)$ and $\mathrm{supp}(g)$ cannot be empty,
and contains some point $x_0$.
Now $\psi$ restricts to a cocycle $\psi_{X - \{x_0\}}$ on $C_{c}^{\infty}(X - \{x_0\})$,
which is diagonal because $X - \{x_0\}$ is noncompact. 
In particular, $\psi(f,g) = \psi_{X - \{x_0\}}(f,g) = 0$, so that $\psi$ is diagonal. 
\end{proof}

In the remainder of this section, we will focus mainly on the second Lie algebra cohomology 
of the Poisson Lie algebra. 
The following proposition 
shows that this suffices in order to determine $H^2(\ham(X),\R)$.

\begin{Proposition} \label{kaf}
For any connected symplectic manifold $X$,
the canonical projection 
$f \mapsto X_{f}$
induces a surjection $H^2(\ham(X),\R) \rightarrow H^2(C^{\infty}(X),\R)$.
If $X$ is compact, this is an isomorphism 
\[H^2(\ham(X),\R) \simeq H^2(C^{\infty}(X),\R)\,.\]
If $X$ is noncompact, then its kernel is 1-dimensional, 
spanned by the Kostant-Souriau class
$[\psi_{KS}]$ of equation~(\ref{eq:KScocycle}). 
Every splitting of the exact sequence 
\[
\R[\psi_{KS}] \rightarrow H^2(\ham(X),R) \rightarrow H^2(C^{\infty}(X),\R)
\]
of vector spaces
yields an isomorphism 
\[
H^2(\ham(X),R) \simeq H^2(C^{\infty}(X),\R) \oplus \R [\psi_{KS}].
\]
%
\end{Proposition}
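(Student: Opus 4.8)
The plan is to analyse the pullback $\pi^{*}\colon H^2(\ham(X),\R)\to H^2(C^{\infty}(X),\R)$ attached to the Kostant--Souriau extension $\R\one\stackrel{\iota}{\longrightarrow}C^{\infty}(X)\stackrel{\pi}{\longrightarrow}\ham(X)$ directly at the level of continuous cochains, rather than invoking a Hochschild--Serre spectral sequence: the low-degree statement can be read off by hand, and this sidesteps justifying the spectral sequence in the topological setting. Throughout I fix $x_{0}\in X$ and the continuous linear section $\sigma(X_{f}):=f-f(x_{0})$ of $\pi$, which is well defined because $X$ is connected.

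First I would prove the key vanishing $\psi(\one,\cdot)\equiv 0$ for every continuous $2$-cocycle $\psi$ on $C^{\infty}(X)$. Feeding the central element $\one$ into the identity $\dd\psi(\one,f,g)=0$ and using $\{\one,f\}=0$ leaves only $\psi(\one,\{f,g\})=0$, so $\psi(\one,\cdot)$ annihilates the commutator ideal $[C^{\infty}(X),C^{\infty}(X)]$. For noncompact $X$ this ideal is all of $C^{\infty}(X)$ by Proposition~\ref{Prop:noncpspoisson}, so $\psi(\one,\cdot)=0$. For compact $X$ the ideal equals $C^{\infty}_{c,0}(X)$ by Proposition~\ref{ALDM}; since $\psi(\one,\one)=0$ by antisymmetry and $\one\notin C^{\infty}_{c,0}(X)$ spans a complement of the commutator ideal, again $\psi(\one,\cdot)=0$. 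Granting this, $\psi$ is invariant under $f\mapsto f+c$ in each slot, hence descends to a bilinear form $\bar\psi(X_{f},X_{g}):=\psi(f,g)$ on $\ham(X)$; it is continuous via $\bar\psi=\psi\circ(\sigma\times\sigma)$ and is a cocycle because $\pi$ is a surjective homomorphism. As $\pi^{*}\bar\psi=\psi$, this already gives surjectivity of $\pi^{*}$ in both cases.

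Next I would identify $\ker\pi^{*}$. Since $\pi$ is surjective, $\pi^{*}$ is injective on cochains, so $[\bar\psi]\in\ker\pi^{*}$ precisely when $\pi^{*}\bar\psi=\dd\mu$ for some continuous $\mu\in C^{\infty}(X)'$, i.e.\ $\bar\psi(X_{f},X_{g})=-\mu(\{f,g\})$; injectivity on cochains then forces $\bar\psi=\bar\psi_{\mu}$ for the cocycle $\bar\psi_{\mu}$ so defined. Thus $\ker\pi^{*}$ is the image of the linear map $\mu\mapsto[\bar\psi_{\mu}]$, and the crux is to decide when $[\bar\psi_{\mu}]=0$, i.e.\ when $\bar\psi_{\mu}=\dd\nu$ for continuous $\nu\in\ham(X)'$. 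Unwinding this via $\nu\circ\pi$, which is exactly a functional on $C^{\infty}(X)$ annihilating $\ker\pi=\R\one$, the condition becomes that $\mu$ agrees on commutators with some $\one$-annihilating functional; since two functionals inducing the same $\bar\psi_{\mu}$ differ by a character of $C^{\infty}(X)$, this says $\mu$ can be corrected by a character so as to annihilate $\one$.

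Here the compact and noncompact cases diverge, and this is the main point. For noncompact $X$ the Lie algebra $C^{\infty}(X)$ is perfect, so it admits no nonzero characters and $\mu$ is rigid: $[\bar\psi_{\mu}]=0$ iff $\mu(\one)=0$. Hence $\mu\mapsto\mu(\one)$ identifies $\ker\pi^{*}$ with $\R$, a generator being $\mu=\ev_{x_{0}}$, for which $\bar\psi_{\mu}(X_{f},X_{g})=-\{f,g\}(x_{0})=-\psi_{KS}(X_{f},X_{g})$; thus $\ker\pi^{*}=\R[\psi_{KS}]$ is one-dimensional. For compact $X$, however, the character $\lambda_{X}(f)=\int_{X}f\,\vol$ has $\lambda_{X}(\one)=\mathrm{vol}(X)\neq0$ (cf.\ Corollary~\ref{corr:KSextension}), so replacing $\mu$ by $\mu-\tfrac{\mu(\one)}{\mathrm{vol}(X)}\lambda_{X}$ changes $\mu$ by a character without altering $\bar\psi_{\mu}$ while forcing $\mu(\one)=0$; therefore $[\bar\psi_{\mu}]=0$ for every $\mu$, giving $\ker\pi^{*}=0$ and an isomorphism. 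Finally, the displayed splitting statement is the purely linear-algebraic fact that the short exact sequence $\R[\psi_{KS}]\to H^2(\ham(X),\R)\to H^2(C^{\infty}(X),\R)$ of vector spaces splits, each choice of splitting furnishing the asserted direct sum. I expect the delicate step to be exactly the bookkeeping of the previous paragraph: tracking continuity of $\mu$ and $\nu$, and isolating the character $\lambda_{X}$ as precisely the mechanism that collapses the Kostant--Souriau class in the compact case.
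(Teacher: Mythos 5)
Your proposal is correct and follows essentially the same route as the paper: both first show that every continuous $2$-cocycle on $C^{\infty}(X)$ kills $\one$ (via the cocycle identity and the commutator-ideal description), so cocycles on $C^{\infty}(X)$ and on $\ham(X)$ coincide, and then both trace the discrepancy in coboundaries to whether a $1$-cochain can be corrected to annihilate $\one$ — the paper via the decomposition $\chi=\chi_0+\chi(\one)\ev_x$ and Corollary~\ref{corr:KSextension}, you via the equivalent character/perfectness analysis of $C^{\infty}(X)$. Your version merely makes the bookkeeping of $\ker\pi^*$ more explicit; there is no substantive difference.
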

\begin{proof}
Every cocycle $\psi \colon C^{\infty}(X) \times C^{\infty}(X) \rightarrow \R$
vanishes on $C^{\infty}(X) \times \R \one$.
Indeed, write $f = \sum_{i=1}^{m}\{g_i,h_i\} + c\one$. (We have
$C^{\infty}(X) = [C^{\infty}(X),C^{\infty}(X)]$ if $X$ is noncompact and
$C^{\infty}(X) = [C^{\infty}(X),C^{\infty}(X)] \oplus \R \one$ if $X$ is compact.)
Then 
\be\label{nulopeen}
\psi(f,\one) = \sum_{i=1}^{m}\psi(\{g_i,h_i\},\one) = -\psi(\{h_i,\one\},g_i) 
-\psi(\{\one, g_i\},h_i) = 0\,.\ee
Since $\ham(X) = C^{\infty}(X)/\R\one$, 2-cocycles on $\ham(X)$ correspond precisely to
2-cocycles on $C^{\infty}(X)$. 
A 1-cochain $\chi$ on $C^{\infty}(X)$ corresponds to a 1-cochain on $\ham(X)$
if and only if $\chi(\one) = 0$. 
Writing $\chi = \chi_0 + \chi(\one)\mathrm{ev}_{x}$ with 
$\chi_0 := \chi - \chi(\one) \mathrm{ev}_{x}$ 
a 1-cochain on $C^{\infty}(X)/\R\one \simeq \ham(X)$, 
we see that 
every 2-coboundary $\dd \chi$ on $C^{\infty}(X)$
is the sum of a coboundary $\dd \chi_0$ in $\ham(X)$ and
a multiple of the Kostant-Souriau cocycle $\dd\mathrm{ev}_{x} = \psi_{\mathrm{KS}}$.
The latter is cohomologous to zero in $\ham(X)$ if and only if $X$
is compact by Corrolary~\ref{corr:KSextension}, so the result follows. 
\end{proof}

\subsection{Cocycles and coadjoint derivations}

If $\fg$ is a locally convex topological Lie algebra,
we denote by $\mathfrak{g}'$ the continuous coadjoint representation of $\mathfrak{g}$,
that is, the continuous dual with the action 
\[(\mathrm{ad}^{*}_{f} \phi)(g):= \phi (-[f,g])\,.\]
A \emph{(coadjoint) derivation} $D \colon \mathfrak{g} \rightarrow \mathfrak{g}'$
is a linear map satisfying 
\[D([f,g]) = \mathrm{ad}^{*}_{f}D(g) - \mathrm{ad}^{*}_{g}D(f)\,.\]
A derivation $\mathfrak{g} \rightarrow \mathfrak{g}' $ is \emph{skew symmetric} 
if $D(f)(g) + D(g)(f) = 0$ for all $f,g\in \mathfrak{g}$, and \emph{inner} if it is of the form
$D(f) = \mathrm{ad}^{*}_{f} H$ for some $H \in \mathfrak{g}'$.
It will be convenient to formulate the second Lie algebra cohomology in terms 
of skew symmetric derivations.

\begin{Proposition}\label{derivcoc}
If $\fg$ is a locally convex Lie algebra, then every continuous 2-cocycle 
$\psi \colon \fg \times \fg \rightarrow \R$ induces a skew symmetric derivation
\[D_{\psi}(f) \colon g \mapsto \psi(f,g)\,.\] 
It is a coboundary  (that is, $\psi = \dd H$ for some $H \in \fg'$) 
if and only if $D_{\psi}$ is inner (that is, $D_{\psi}(f) = \mathrm{ad}^{*}_{f} H$ for some $H \in \fg'$).
Conversely, every skew symmetric derivation $D$ induces a 
separately continuous 2-cocycle
\begin{equation}\label{psiD}
\psi_{D}(f,g) := D(f)(g)
\end{equation}
on $\fg$. 
If $\fg$ is either a Fr\'echet Lie algebra
or if $\fg = C^{\infty}_{c}(X)$, then $\psi_{D}$ is automatically jointly continuous.
In this case, then, 
$H^2(\mathfrak{g},\R)$ is the space of skew symmetric derivations
modulo the inner ones. 
\end{Proposition}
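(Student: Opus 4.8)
The plan is to establish the three claims of Proposition~\ref{derivcoc} in sequence, treating the bijective correspondence between cocycles and skew symmetric derivations first, then the coboundary--inner dictionary, and finally the continuity statements. First I would verify that if $\psi$ is a continuous 2-cocycle, then $D_\psi(f)\colon g\mapsto \psi(f,g)$ really lands in $\fg'$: for fixed $f$, the map $g\mapsto \psi(f,g)$ is linear and continuous because $\psi$ is (separately) continuous, so $D_\psi(f)\in\fg'$. Skew symmetry $D_\psi(f)(g)+D_\psi(g)(f)=\psi(f,g)+\psi(g,f)=0$ is immediate from the antisymmetry of $\psi$. The derivation property is exactly the cocycle identity rewritten: expanding
\[
\dd\psi(f,g,h)=\psi([f,g],h)-\psi([f,h],g)+\psi([g,h],f)=0
\]
and reorganising gives $D_\psi([f,g])(h)=\psi([f,g],h)=\psi(f,[g,h])-\psi(g,[f,h])$, which is precisely $(\ad^*_f D_\psi(g))(h)-(\ad^*_g D_\psi(f))(h)$ after using the definition $(\ad^*_f\phi)(h)=\phi(-[f,h])=-\phi([f,h])$. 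So the cocycle identity and the derivation identity are literally the same equation, term by term.

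Next I would handle the coboundary--inner equivalence. For $H\in\fg'$, the coboundary $\dd H$ is by Definition~\ref{Cohomology} given by $\dd H(f,g)=-H([f,g])=(\ad^*_f H)(g)$, so the derivation induced by the cocycle $\dd H$ is exactly $f\mapsto \ad^*_f H$, which is the inner derivation associated to $H$. Running this backwards, $\psi=\dd H$ holds if and only if $D_\psi(f)=\ad^*_f H$ for all $f$, i.e.\ $D_\psi$ is inner with the same $H$. This is a direct translation, and the only care needed is to match sign conventions between the differential in Definition~\ref{Cohomology} and the coadjoint action $(\ad^*_f\phi)(g)=\phi(-[f,g])$; I would state the identification $\dd H(f,g)=(\ad^*_f H)(g)$ explicitly so the dictionary is transparent.

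For the converse, given a skew symmetric derivation $D$, I would set $\psi_D(f,g):=D(f)(g)$ and check that it is an alternating bilinear form satisfying the cocycle identity; antisymmetry is the skew symmetry hypothesis, and the cocycle identity is again the derivation identity read in the other direction, so no new computation is required. Separate continuity is clear: for fixed $f$, $g\mapsto D(f)(g)$ is continuous because $D(f)\in\fg'$, and for fixed $g$, $f\mapsto D(f)(g)=-D(g)(f)$ is continuous for the same reason. The genuinely nontrivial point, and the one I expect to be the main obstacle, is upgrading separate continuity to joint continuity. For $\fg$ a Fr\'echet space this follows from a standard theorem: a separately continuous bilinear map on a product of Fr\'echet spaces (indeed on a Baire space in one factor and a Fr\'echet space in the other) is jointly continuous, so I would invoke that and cite it. For $\fg=C^\infty_c(X)$, which carries the LF-topology rather than a Fr\'echet topology, the Baire-category argument fails on the whole space, so I would instead restrict to each Fr\'echet step $C^\infty_K(X)$ of the strict inductive limit, obtain joint continuity there, and then argue that joint continuity on each $C^\infty_K(X)\times C^\infty_{K'}(X)$ suffices to conclude joint continuity of the bilinear map on the LF-space (using that a bilinear map out of a product of strict LF-spaces is continuous iff its restriction to each product of defining Fr\'echet pieces is). Granting this, the assignments $\psi\mapsto D_\psi$ and $D\mapsto\psi_D$ are mutually inverse and respect the coboundary/inner correspondence, so $H^2(\fg,\R)$ is identified with skew symmetric derivations modulo inner ones, as claimed.
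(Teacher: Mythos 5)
Your dictionary between 2-cocycles and skew symmetric derivations, the sign bookkeeping identifying $\dd H(f,g)$ with $(\ad^*_fH)(g)$, and the appeal to a Baire-category theorem for joint continuity in the Fr\'echet case all match the paper's proof essentially verbatim. The one place where you genuinely diverge is the case $\fg=C^{\infty}_{c}(X)$, and there your argument as written has a gap. You propose to get joint continuity on each Fr\'echet step $C^{\infty}_{K}(X)\times C^{\infty}_{K'}(X)$ and then conclude joint continuity on the LF-space, ``using that a bilinear map out of a product of strict LF-spaces is continuous iff its restriction to each product of defining Fr\'echet pieces is.'' That is not the universal property of the inductive limit: the product topology on $E\times F$ for LF-spaces $E=\varinjlim E_n$, $F=\varinjlim F_m$ is in general strictly coarser than the inductive limit topology of the $E_n\times F_m$, so continuity on every piece does not formally yield continuity on the product. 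Concretely, if you try to build a convex balanced neighbourhood $U$ with $|\psi_D|\le 1$ on $U\times U$ from the estimates $|\psi_D(f,g)|\le p_k(f)q_k(g)$ on $E_k\times F_k$, then $U\cap E_n$ must be contained in $\bigcap_{k\ge n}\{f\,;\,p_k(f)\le\epsilon_k\}$, an infinite intersection of neighbourhoods which need not be a neighbourhood. The conclusion you want is nevertheless true for \emph{scalar-valued} bilinear forms: separately continuous bilinear forms on a product of LF-spaces are jointly continuous by a theorem of Grothendieck, which is exactly the analytic input behind the Schwartz kernel theorem. So your argument can be repaired by citing that theorem instead of treating the globalization as routine. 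The paper takes a slightly different route here: it observes that, by skew symmetry and diagonality, $D$ is weakly continuous and hence given by a distribution on $X\times X$ via the Schwartz Kernel Theorem, from which joint continuity follows; this packages the same nontrivial analytic fact into a single standard citation, whereas your version exposes the step that actually needs a theorem. Everything else in your proposal is correct and coincides with the paper.
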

\begin{proof}
Since $\psi$ is continuous, so is $D_{\psi}(f) \colon \fg \rightarrow \R$. 
Skew symmetry follows from $\psi(f,g) = -\psi(g,f)$, and the cocycle condition 
\[\psi(\{f,g\},h) = \psi(g,-\{f,h\}) - \psi(f,-\{g,h\})\] is precisely the requirement that 
$D_{\psi}$ be a derivation. Conversely, every skew symmetric derivation $D\colon \mathfrak{g} \rightarrow \mathfrak{g}'$ 
defines a 2-cocycle $\psi_{D}$ by \eqref{psiD}.
The derivation $D_{\psi}$ gives back the cocycle $\psi$, 
and $\psi = \dd H$ if and only if $D_{\psi}$ is inner, $D_{\psi}(f) = \mathrm{ad}^{*}_{f}H$.
Since $\psi_{D}$ is skew symmetric 
and continuous in the right argument, it is a \emph{separately} continuous bilinear map on $\fg$,
and every inner derivation defines a jointly continuous coboundary.

If $\fg$ is a Fr\'echet Lie algebra (such as $\fg = C^{\infty}(X)$ or $\fg = \ham(X)$), 
then $\psi_{D}$ is jointly continuous 
by \cite[Thm.~2.17]{Rudin1991}.
If $\fg = C^{\infty}_{c}(X)$ for a symplectic manifold $X$, then by skew symmetry,
$D \colon C^{\infty}_{c}(X) \rightarrow C^{\infty}_{c}(X)'$
is continuous for the weak topology, hence given by a distribution on $X \times X$
by the Schwartz Kernel Theorem \cite[\S 23.9.2]{Dieudonne7}.
In particular, it is jointly continuous.
\end{proof}

\begin{Remark}\label{Rk:mapH1H2}
The first Lie algebra cohomology $H^1(\fg, \fg')$ of $\fg$ with values in $\fg'$ 
is defined as the vector space of derivations $D \colon \fg \rightarrow \fg'$ 
modulo the inner derivations.  
Note that a 2-cocycle $\psi$ is a coboundary,
 $\psi = \dd H$ for some $H\in\g'$, if and only if $D_{\psi}$ is inner, $D_{\psi}(f) = \mathrm{ad}^{*}_{f}H$.
From Proposition \ref{derivcoc}, we thus obtain a natural map
\begin{equation}\label{eq:mapH1H2}
H^2(\g,\R)\to H^1(\g,\g') \,\colon \quad [\psi]\mapsto[D_\psi]\,.
\end{equation}
\end{Remark}

\begin{Remark}
For a weakly hamiltonian action $A \colon G \rightarrow \mathrm{Diff}(X)$
(cf.\ Remark \ref{rk:HamAct}), the infinitesimal action $a \colon \fg \rightarrow \ham(X)$
is $G$-equivariant. Since also the projection $\pi \colon C^{\infty}(X) \rightarrow \ham(X)$
is $G$-equivariant, commutativity of the diagram
\[
\xymatrixcolsep{1.5pc}
\xymatrixrowsep{1.5pc}
\xymatrix{
\R \ar@{^{(}->}[r]& C^{\infty}(X)\ar@{->>}[r]^{\pi}&\ham(X)\\
&& \fg\ar[u]_{a}\ar[ul]^{J} &
}
\]
shows that $\theta_{J}(g) := A(g)^*\circ J - J \circ \mathrm{Ad}_{g^{-1}}$ 
satisfies $\pi \circ \theta_{J}(g) = 0$, hence takes values in $\R$. 
The map $\theta_{J} \colon G \rightarrow \fg'$ 
is a 1-cocycle, $\theta_{J}(gh) = \theta_{J}(g) + \mathrm{Ad}^{*}_{g}\theta_{J}(h)$, 
whose class $[\theta_{J}] \in H^1(G,\fg')$ does not depend on the choice of $J$.
It vanishes if and only if the action has a $G$-equivariant comomentum map
\cite[{p.~108--115}]{Souriau1970}.
The derived class $[D\theta_{J}] \in H^1(\fg,\fg')$ in Lie algebra cohomology 
is the image of the class 
$[\psi_{J}] \in H^2(\fg, \R)$ of Remark \ref{rk:HamAct} under 
the map $H^2(\g,\R)\to H^1(\g,\g')$ of 
Remark \ref{Rk:mapH1H2}.
\end{Remark}

In the following lemma, we use Peetre's Theorem to show that
for $\fg = C^{\infty}_{c}(X)$, every skew-symmetric derivation is a differential operator.
We choose a locally finite cover of $X$
by open, relatively compact neighbourhoods $U_{i}$
with Darboux coordinates 
$x^{\mu}$ with $\mu \in \{1, \ldots, 2n\}$.
Using the summation convention, the Poisson bracket in Darboux coordinates
becomes 
\[
\{f,g\}=\om^{\si\ta}\partial_\si f\partial_\ta g\,,
\]
where $\om_{\si\ta}$ denote the elements of the matrix 
\[J=\left(\begin{array}{ccc} 0 & I \\ -I & 0 \end{array}\right)\] and $\om^{\si\ta}=-\om_{\si\ta}$
denote the elements of its inverse $J^{-1}=-J$.

We write $\partial_{\vec{\mu}}$ for 
$\partial_{\mu_1} \ldots \partial_{\mu_{N}}$ if $\vec{\mu}$ is the multi-index
$\vec{\mu} = (\mu_1, \ldots, \mu_{N})$ in $\{1, \ldots, 2n\}^{N}$.
Note that if $\vec{\mu}'$ is a permutation of $\vec{\mu}$, i.e.\
$\mu'_{i} = \mu_{\sigma(i)}$ with $\sigma \in S_{N}$,
then $\partial_{\vec{\mu}'} = \partial_{\vec{\mu}}$. 
We write $\vec{\mu}\vec{\nu}$ for the concatenation in 
$\{1, \ldots, 2n\}^{N+M}$ of $\vec{\mu} \in \{1, \ldots, 2n\}^{N}$ and 
$\vec{\nu} \in \{1, \ldots, 2n\}^{M}$, and we write $|\vec{\mu}|$
for the length $N$ of $\vec{\mu} \in \{1, \ldots, 2n\}^{N}$.
There is one multi-index $\vec{\mu} = *$ for $N=0$. 

\begin{Lemma} \label{Peetre}
Every skew symmetric derivation $D \colon C^{\infty}_{c}(X) \rightarrow C^{\infty}_{c}(X)'$
is support decreasing, hence determined by its restrictions 
$D_{U_i} \colon C^{\infty}_{c}(U_i) \rightarrow C^{\infty}_{c}(U_i)'$.
In local coordinates, it is
a differential operator 
\be\label{localder}
D_{U_i}(f) = \sum_{\vec{\mu}} (\partial_{\vec{\mu}} f) S^{\vec{\mu}}
\ee
of locally finite order, with distributions $S^{\vec{\mu}} \in C^{\infty}_{c}(U_i)'$
that are uniquely determined  by $D$ and the requirement that they are invariant 
under permutations of~$\vec{\mu}$.
\end{Lemma}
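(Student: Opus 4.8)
The plan is to first promote the diagonality of the cocycle attached to $D$ to the support-decreasing property, and then feed this locality into Peetre's theorem. By Proposition~\ref{derivcoc}, the skew symmetric derivation $D$ is encoded by the separately continuous cocycle $\psi_D(f,g) = D(f)(g)$, which is diagonal by Proposition~\ref{ookgoonaal}. Diagonality is precisely the support-decreasing property: if $y \notin \mathrm{supp}(f)$, pick a chart $U_i \ni y$ disjoint from $\mathrm{supp}(f)$, so that $D(f)(g) = \psi_D(f,g) = 0$ for every $g$ supported in $U_i$, whence $y \notin \mathrm{supp}(D(f))$. This makes $D$ local: if $f = f'$ on the open set $U_i$, then $\mathrm{supp}(f-f') \subseteq X \setminus U_i$, so $D(f)|_{U_i} = D(f')|_{U_i}$ depends only on $f|_{U_i}$. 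This defines the restrictions $D_{U_i} \colon C^{\infty}_{c}(U_i) \to C^{\infty}_{c}(U_i)'$, and a partition of unity subordinate to $\{U_i\}$ reconstructs $D$ from them, settling the first two assertions.

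Next I would work in a single relatively compact chart $U_i$. Being linear and support decreasing, $D_{U_i}$ is subject to Peetre's theorem, which yields that it is a differential operator of locally finite order. The only subtlety is that the target is $C^{\infty}_{c}(U_i)'$ rather than $C^{\infty}_{c}(U_i)$, so the coefficients are distributions. To make this precise I would pass to the Schwartz kernel: by the Schwartz Kernel Theorem (already invoked in Proposition~\ref{derivcoc}), the form $\psi_D$ is represented on $U_i \times U_i$ by a distribution $K$, and diagonality forces $\mathrm{supp}(K) \subseteq \Delta_{U_i}$. The structure theorem for distributions supported on the submanifold $\Delta_{U_i} \cong U_i$ then expresses $K$ as a locally finite sum of transverse derivatives of the delta distribution along the diagonal with distributional coefficients; integrating by parts to route all derivatives onto the first argument gives exactly $D_{U_i}(f) = \sum_{\vec{\mu}}(\partial_{\vec{\mu}} f) S^{\vec{\mu}}$ with $S^{\vec{\mu}} \in C^{\infty}_{c}(U_i)'$ and the sum locally finite.

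It remains to address uniqueness and permutation symmetry. Since $\partial_{\vec{\mu}'} = \partial_{\vec{\mu}}$ whenever $\vec{\mu}'$ permutes $\vec{\mu}$, only the totally symmetric part of the coefficient family contributes to the sum, so I would normalise the $S^{\vec{\mu}}$ to be invariant under permutations of $\vec{\mu}$, which removes this ambiguity. To recover them, I would test $D_{U_i}$ against functions of prescribed jet: for a test function $g$ supported near a point, choosing $f$ to coincide near $\mathrm{supp}(g)$ with a suitable monomial times a cutoff isolates the single pairing $\langle S^{\vec{\mu}}, g\rangle$ from the finitely many nonzero terms, showing that the symmetric representatives are uniquely determined by $D$.

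The main obstacle is the middle step: converting the bare locality of $D$ into a differential operator of locally finite order with genuinely distributional coefficients. Classical Peetre is phrased for smooth-valued operators, so the real content lies in handling the distribution-valued target, which is best done through the Schwartz kernel and the structure of distributions supported on the diagonal; the accompanying bookkeeping that routes all derivatives onto $f$ via skew symmetry and integration by parts is routine but must be carried out carefully to land on the precise stated form.
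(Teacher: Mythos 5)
Your first step coincides with the paper's: diagonality of $\psi_D$ (Prop.~\ref{ookgoonaal}) gives $\mathrm{supp}(D(f))\subseteq\mathrm{supp}(f)$, hence locality of $D$ and the well-definedness of the restrictions $D_{U_i}$. For the main step you then genuinely diverge. The paper applies Peetre's theorem directly to the support-decreasing maps $\lambda_j D(\lambda_i\,\cdot\,)\colon C^{\infty}_{c}(U_i)\to C^{\infty}_{c}(U_j)'$, using the joint continuity from Prop.~\ref{derivcoc} to conclude that the set of discontinuities is empty. You instead pass to the Schwartz kernel $K$ of $\psi_D$ on $U_i\times U_i$, note that diagonality forces $\mathrm{supp}(K)\subseteq\Delta_{U_i}$, and invoke the structure theorem for distributions supported on a submanifold, then route all transverse derivatives onto the first argument. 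Both routes are valid. Yours is arguably better adapted to the distribution-valued target: classical formulations of Peetre's theorem concern smooth-valued operators, so the paper is implicitly relying on the distributional variant of that theorem, whereas the kernel-theorem argument (already in play in Prop.~\ref{derivcoc}) yields the locally finite expansion $D_{U_i}(f)=\sum_{\vec{\mu}}(\partial_{\vec{\mu}}f)S^{\vec{\mu}}$ using only the fact that a distribution has finite order on compacta; the price is the coordinate bookkeeping near the diagonal, which is indeed routine.

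One small repair is needed in your uniqueness argument. Testing against an $f$ that equals a single monomial $(x-x_0)^{\vec{\mu}}$ near $\mathrm{supp}(g)$ does \emph{not} isolate the pairing $\langle S^{\vec{\mu}},g\rangle$: the lower-order derivatives $\partial_{\vec{\nu}}f$ with $|\vec{\nu}|<|\vec{\mu}|$ are nonzero polynomials on all of $\mathrm{supp}(g)$, so several terms survive. What you obtain is a triangular system, solvable by downward induction on $|\vec{\mu}|$; alternatively, take $f_t=\chi e^{t\cdot x}$ with $\chi\equiv 1$ near $\mathrm{supp}(g)$ and pair $D(f_t)$ against $g\,e^{-t\cdot x}$, which produces the polynomial $\sum_{\vec{\mu}}t^{\vec{\mu}}S^{\vec{\mu}}(g)$ in $t$ and recovers the symmetrized coefficients at once. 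Since the paper asserts uniqueness without proof, this is a refinement rather than a divergence from its argument, but as literally stated your test does not close that step.
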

\begin{proof}
Since $\psi_{D}$ is diagonal (Prop.~\ref{ookgoonaal}), 
$D$ is support decreasing, $\mathrm{supp}(D(f)) \subseteq \mathrm{supp}(f)$.
The fact that $\psi_{D}$ is jointly continuous (Prop.~\ref{derivcoc})
implies that the derivation $D \colon C^{\infty}_{c}(X) \rightarrow C^{\infty}_{c}(X)'$ is 
continuous in the topology on $C^{\infty}_{c}(X)'$ obtained from the seminorms $p^*_{K,\vec{\mu}}$
dual to the seminorms $p_{K,\vec{\mu}}$
that define the Fr\'echet topology on $C^{\infty}_{K}(X)$. 
If we choose a partition of unity $\lambda_i$ subordinate to the cover $U_i$,
then the expression for $D_{U_i}$ follows from
Peetre's theorem \cite[Thm.~1]{Pe} 
applied to the support decreasing maps 
$\lambda_j D(\lambda_i \,\cdot\,) \colon C^{\infty}_{c}(U_{i}) \rightarrow C^{\infty}_{c}(U_j)'$,
which have  empty set of discontinuities. 
\end{proof}

If we consider $C^{\infty}(X)$ instead of $C^{\infty}_{c}(X)$, then the only thing that changes is that
the relevant cochains must have compact support. The following lemma gives the relation
between the two situations, and essentially allows us to restrict attention to $C^{\infty}_{c}(X)$.

 \begin{Lemma}\label{supportcompact}
The cocycle $\psi_{D}$ extends continuously to $C^{\infty}(X)$ if and only if 
$D$ is of 
compact support. If $\psi_{D} = \dd \chi$ for 
$\chi \in C^{\infty}_{c}(X)'$, then $\chi$ extends to $C^{\infty}(X)$.
In particular, the map $\iota^* \colon H^2(C^{\infty}(X),\R) \rightarrow H^2(C^{\infty}_{c}(X),\R)$
induced by
the inclusion $\iota \colon C_{c}^{\infty}(X) \rightarrow C^{\infty}(X)$,
is injective.
\end{Lemma}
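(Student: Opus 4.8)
The plan is to read compact support off the interplay of the two topologies by seminorm estimates, and then to determine a coboundary generator for $\psi_D$ from the relation $\dd\chi=\psi_D$ together with the locality and support-decrease of $D$ furnished by Lemma~\ref{Peetre} and Proposition~\ref{derivcoc}.

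\emph{The equivalence.} Write $p_{K,N}(f):=\max_{|\vec\mu|\le N}\sup_{x\in K}|\partial_{\vec\mu}f(x)|$ for the seminorms defining the Fr\'echet topology of $C^{\infty}(X)$. For the implication ``$D$ compactly supported $\Rightarrow$ $\psi_D$ extends'', put $K:=\supp(D)$ and choose a cutoff $\lambda\in C^{\infty}_{c}(X)$ with $\lambda\equiv 1$ on a neighbourhood of $K$ and $\supp(\lambda)\subseteq K'$ relatively compact. Since $D$ is support-decreasing and kills functions supported off $K$ (Lemma~\ref{Peetre}), skew symmetry gives $\psi_D(f,g)=\psi_D(\lambda f,\lambda g)$ for all $f,g\in C^{\infty}_{c}(X)$. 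As $\lambda f,\lambda g$ lie in the single Fr\'echet step $C^{\infty}_{K'}(X)$, joint continuity (Proposition~\ref{derivcoc}) and the Leibniz rule yield $|\psi_D(f,g)|\le C\,p_{K',N}(f)\,p_{K',N}(g)$ for some $N$; this is continuity for the $C^{\infty}(X)$-topology, and density of $C^{\infty}_{c}(X)$ in $C^{\infty}(X)$ gives the unique continuous extension, which remains a cocycle by continuity of the bracket. Conversely, any continuous extension to the Fr\'echet space $C^{\infty}(X)$ is bounded by $p_{K,N}\otimes p_{K,N}$ for one compact $K$ and one order $N$, so $D(f)=0$ whenever $p_{K,N}(f)=0$, forcing $\supp(D)\subseteq K$.

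\emph{The generator.} The relation $\psi_D=\dd\chi$ reads $\chi(\{f,g\})=-\psi_D(f,g)$, so $\chi$ is already fixed on the commutator ideal $C^{\infty}_{c,0}(X)=[C^{\infty}_{c}(X),C^{\infty}_{c}(X)]$ (Proposition~\ref{ALDM}). Because $\psi_D(f,g)=0$ once $\supp(f),\supp(g)\subseteq X\setminus K$, applying Proposition~\ref{ALDM} on the open symplectic manifold $X\setminus K$ shows $\chi$ vanishes on $C^{\infty}_{c,0}(X\setminus K)$; hence on each connected component $W$ of $X\setminus K$ the restriction of $\chi$ is a multiple $c_W\lambda_W$ of the integration functional $\lambda_W(f)=\int_W f\,\vol$. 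I would then correct $\chi$ by the cutoff: set $\tilde\chi(f):=\chi(\lambda f)$, a compactly supported distribution, so that the defect is $\dd\tilde\chi-\psi_D=\chi\big((1-\lambda)\{f,g\}\big)$. Using $\{f,g\}\vol=df\wedge dg\wedge\omega^{n-1}/(n-1)!$ from~\eqref{commutatorexact} and Stokes on each $W$ (the boundary contributions vanish as $1-\lambda\equiv 0$ near $\partial W\subseteq\partial K$), this defect equals the Roger cocycle $\psi_\gamma$ of the \emph{closed, compactly supported} $1$-form $\gamma:=c\,d\lambda$, where $c$ is the locally constant function equal to $c_W$ on $W$. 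Thus $\psi_D$ is a coboundary on $C^{\infty}(X)$ exactly when the class $[\gamma]\in H^1_{\mathrm{dR},c}(X)$ vanishes: in that case $\gamma=d\eta$ with $\eta$ compactly supported, $\psi_\gamma$ is the coboundary of a compactly supported functional $\rho$, and $\tilde\chi-\rho$ is a compactly supported primitive of $\psi_D$, i.e.\ a coboundary generator extending to $C^{\infty}(X)$.

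\emph{Injectivity and the main obstacle.} Granting $[\gamma]=0$, injectivity of $\iota^*$ is then formal: for a continuous cocycle $\Psi$ on $C^{\infty}(X)$ with $\iota^*\Psi=\dd\chi$ on $C^{\infty}_{c}(X)$, the equivalence makes $\supp(D)$ compact, the previous step produces a continuous functional $\tilde\chi-\rho$ on $C^{\infty}(X)$ with $\dd(\tilde\chi-\rho)=\iota^*\Psi$ on the dense subalgebra $C^{\infty}_{c}(X)$, and joint continuity upgrades this to an equality on all of $C^{\infty}(X)$, whence $[\Psi]=0$. The step I expect to be the crux is precisely the vanishing of the obstruction $[\gamma]\in H^1_{\mathrm{dR},c}(X)$: one must show that the tail-constants $c_W$ on the unbounded components of $X\setminus K$ assemble into a compactly-supported-exact form. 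This cannot be decided component-by-component on $X\setminus K$, where the $c_W$ appear independent; it forces one to exploit the global relation $\dd\chi=\psi_D$ across $\partial K$ together with the connectedness of $X$, and is the point where the argument genuinely has content.
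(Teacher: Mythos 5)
Your handling of the first claim is fine: the cutoff argument for sufficiency, and the observation that a jointly continuous bilinear form on the Fr\'echet space $C^{\infty}(X)$ is dominated by $p_{K,N}\otimes p_{K,N}$ for a \emph{single} compact $K$ (forcing $\supp(D)\subseteq K$), give a clean alternative to the paper's route, which instead derives a divergent series $\sum_i\psi_D(f_i,g_i)=\sum_i 1$ from a locally finite family of disjointly supported pairs. The genuine gap is in the second claim: you stop at the obstruction $[\gamma]\in H^1_{\mathrm{dR},c}(X)$ and explicitly defer its vanishing, so the proof is incomplete. The paper closes this step by an entirely different argument: assuming $\supp(\chi)$ noncompact, it picks $F_i\in C^{\infty}_{c}(V_i)$ with $\chi(F_i)=1$ on disjoint sets $V_i$ escaping to infinity, writes each $F_i$ as a sum of at most $2n(2n+1)$ commutators of functions compactly supported in $V_i$ (Prop.~\ref{ALDM}, Rk.~\ref{Rk:numbercomm}), and again produces a divergent series from the continuity of the extension of $\psi_D$; no de Rham class enters.

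You should know, however, that the obstruction you isolate is not an artifact of your method, and that the paper's argument is silent at exactly the same spot: writing $F_i$ as commutators with factors in $C^{\infty}_{c}(V_i)$ requires $F_i\in C^{\infty}_{c,0}(V_i)$, and if $\chi$ is, near infinity, locally a nonzero multiple of $f\mapsto\int f\,\vol$ --- precisely your case of nonvanishing tail constants $c_W$ --- no admissible $F_i$ with $\chi(F_i)\neq 0$ exists. Moreover $[\gamma]$ genuinely need not vanish. On the cylinder $X=S^1\times\R$ take $h=h(s)$ with $dh$ compactly supported and $h(+\infty)\neq h(-\infty)$: then $\chi_h(f)=\int_X fh\,\vol$ satisfies $\psi_{dh}=-\dd\chi_h$ by \eqref{bijdewildespinnen}, the cocycle $\psi_{dh}$ extends continuously to $C^{\infty}(X)$ because $dh$ has compact support, yet every primitive of $\psi_{dh}$ on $C^{\infty}_{c}(X)$ differs from $-\chi_h$ by a multiple of the character $f\mapsto\int_X f\,\vol$ and hence is never compactly supported. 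Consistently, Theorems~\ref{HoofdZaak} and~\ref{noncptpoissonh2} identify $\iota^*$ with the natural map $H^1_{\mathrm{dR},c}(X)\to H^1_{\mathrm{dR}}(X)$, whose kernel is nontrivial for the cylinder. So the ``crux'' you flag cannot be established in general: the second and third assertions of the Lemma only hold after correcting $\chi$ by a character, and injectivity of $\iota^*$ must be weakened to injectivity modulo the kernel of $H^1_{\mathrm{dR},c}(X)\to H^1_{\mathrm{dR}}(X)$. Identifying this is to your credit, but as a proof of the stated Lemma the proposal does not (and cannot) go through.
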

\begin{proof}
Suppose that
$\psi_{D}$ extends to a continuous cocycle on $C^{\infty}(X)$.
If $\mathrm{supp}(D)$ were not compact, there would 
exist a countably infinite, locally finite set of points 
$x_i \in \mathrm{supp}(\psi_{D})$. 
We would find disjoint open sets  $V_i \ni x_i$ 
and functions $f_i, g_i$, supported in $V_i$, 
such that $D(f_i)(g_i) = 1$. 
Since the sum $\sum_{i=1}^{\infty} (f_i, g_i)$ converges in 
$C^{\infty}(X)\times C^{\infty}(X)$, so would 
$\sum_{i=1}^{\infty} \psi_{D}(f_i, g_i) =  \sum_{i=1}^{\infty} 1$,
which is absurd. We conclude that if $\psi_{D}$ extends continuously 
to $C^{\infty}(X)$, then $\mathrm{supp}(\psi_{D})$ is compact. 
The converse implication is clear.

It remains to show that if $\psi_{D}$ extends
to $C^{\infty}(X)$ and $\psi_D = \dd \chi$ on $C^{\infty}_{c}(X)$, then 
$\mathrm{supp}(\chi)$ is compact, so that $\chi$ extends to $C^{\infty}(X)$. 
Suppose $\mathrm{supp}(\chi)$ were not compact. Then again, we find 
disjoint open sets $V_i \subseteq X$ and $F_i \in C^{\infty}_{c}(V_i)$
with $\chi(F_i) = 1$. 
Using Remark~\ref{Rk:numbercomm} to write 
$F_i = - \sum_{k=1}^{2n(2n+1)} \{f_i^{k}, g_i^{k}\}$
with $f_i^k, g_i^k \in C^{\infty}_{c}(V_i)$, we find
$
\sum_{k=1}^{2n(2n+1)} \psi(f^k_i, g_i^k) = 1
$.
Since $\sum_{i=1}^{\infty} (f^k_i, g^k_i)$ converges in $C^{\infty}(X)\x C^\oo(X)$,
we see that $\sum_{i=1}^{\infty} \sum_{k=1}^{2n(2n+1)} \psi(f^k_i, g_i^k) = \sum_{i=1}^{\infty}1$ converges, 
which is absurd. We conclude that
$\mathrm{supp}(\chi)$ is compact.
\end{proof}

Sanctioned by Lemma~\ref{supportcompact}, we restrict attention to $C^{\infty}_{c}(X)$.
By looking at the cocycle equation in local coordinates, we now prove that the differential
operator $D$ corresponding to a continuous 2-cocycle $\psi_{D}$ can have degree 
at most~1. In Lemma~\ref{derivationsII}, we will use this to derive explicit local 
expressions for the cocycles.

\begin{Lemma}\label{derivations}
If a differential operator 
$D\colon C^{\infty}_{c}(X) \rightarrow C^{\infty}_{c}(X)'$
is a derivation, then it is of order at most 1.
\end{Lemma}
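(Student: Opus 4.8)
The plan is to work in a Darboux chart and use the local description of $D$: by Lemma~\ref{Peetre} we may write $D(f)=\sum_{\vec\mu}(\partial_{\vec\mu}f)S^{\vec\mu}$ with distributional coefficients $S^{\vec\mu}$, and we let $N$ be the largest length $|\vec\mu|$ for which some $S^{\vec\mu}$ is nonzero on the chart; the goal is to show $N\le 1$. The derivation property of $D$ says exactly that $\psi_D(f,g):=D(f)(g)$ satisfies the $2$-cocycle identity
\[
\psi_D(\{f,g\},h) = \psi_D(f,\{g,h\}) - \psi_D(g,\{f,h\})\,.
\]
The idea is to feed highly oscillatory test functions into this identity and extract a constraint on the principal symbol of $D$, rather than to compare coefficients of the highest derivatives of $f$ directly.

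Concretely, I would test the identity with (localized) plane waves $e^{i\langle\xi,x\rangle}$, $e^{i\langle\eta,x\rangle}$, $e^{i\langle\zeta,x\rangle}$. Since the chart is Darboux, $\omega^{\sigma\tau}$ is constant and the Poisson bracket acts by $\{e^{i\langle\xi,x\rangle},e^{i\langle\eta,x\rangle}\} = -\{\xi,\eta\}\,e^{i\langle\xi+\eta,x\rangle}$, where $\{\xi,\eta\}:=\omega^{\sigma\tau}\xi_\sigma\eta_\tau$ is the induced symplectic pairing of frequencies. Writing $\widehat{S^{\vec\mu}}(k):=\langle S^{\vec\mu},e^{i\langle k,x\rangle}\rangle$ and introducing the symbol $P(\xi;k):=\sum_{\vec\mu}(i\xi)^{\vec\mu}\,\widehat{S^{\vec\mu}}(k)$, a polynomial of degree $N$ in $\xi$ with top part $P_N(\xi;k)=\sum_{|\vec\mu|=N}(i\xi)^{\vec\mu}\widehat{S^{\vec\mu}}(k)$, the cocycle identity becomes
\[
\{\xi,\eta\}\,P(\xi+\eta;k) = \{\eta,\zeta\}\,P(\xi;k) - \{\xi,\zeta\}\,P(\eta;k),\qquad k:=\xi+\eta+\zeta .
\]
Fixing $k$ and substituting $\zeta=k-\xi-\eta$, this is a polynomial identity in $(\xi,\eta)$. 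Extracting the unique homogeneous part of top degree $N+2$ — to which $\{\eta,\zeta\}$ and $\{\xi,\zeta\}$ contribute only through their degree-two pieces, combining to $\{\xi,\eta\}\big(P_N(\xi;k)+P_N(\eta;k)\big)$ on the right — yields
\[
\{\xi,\eta\}\big(P_N(\xi+\eta;k) - P_N(\xi;k) - P_N(\eta;k)\big) = 0 .
\]
Cancelling the nonzero factor $\{\xi,\eta\}$ in the polynomial ring forces $P_N(\,\cdot\,;k)$ to be additive; being also homogeneous of degree $N$, it must vanish whenever $N\ge 2$. Hence $\widehat{S^{\vec\mu}}\equiv 0$, so $S^{\vec\mu}=0$ for every $|\vec\mu|=N\ge 2$, contradicting the choice of $N$. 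Therefore $N\le 1$.

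The main technical obstacle is making this symbol calculus rigorous, since plane waves are not compactly supported while $\psi_D$ is a priori defined only on $C^{\infty}_{c}(X)$. I would handle this by localizing: multiply each oscillatory function by a fixed cutoff that is identically $1$ on the region of interest, observing that the extra terms produced by differentiating the cutoff are of strictly lower order in the frequencies and so do not affect the top homogeneous component $P_N$ that drives the argument; equivalently one may test against compactly supported polynomials and isolate the symbol by a rescaling argument. This is also precisely the point that makes the naive approach fail — equating coefficients of the highest derivatives of a single variable is not legitimate because integration by parts can trade a top-order term for lower-order ones — and the Fourier formulation circumvents it by packaging the whole identity as a polynomial identity in the frequencies. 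Finally, it is reassuring that the computation leaves the case $N=1$ untouched (additivity is automatic for linear $P_N$), consistent with the existence of the order-one Roger cocycles $\psi_\alpha$.
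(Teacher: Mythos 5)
Your argument is correct, and it takes a genuinely different route from the paper's. The paper works directly with the jet coefficients: it writes the derivation identity in Darboux coordinates as $f_{\vec\mu}g_{\vec\nu}Q^{\vec\mu;\vec\nu}=0$, concludes $Q^{\vec\mu;\vec\nu}=0$, and then kills $S^{\vec\alpha\vec\beta}$ for $|\vec\alpha\vec\beta|\ge 2$ by contracting $Q^{\sigma\cup\vec\alpha;\tau\cup\vec\beta}$ with $\omega_{\tau\sigma}$, which produces the nonzero factor $(2n+r+s)$. Your plane-wave substitution is the Fourier-dual of that coefficient comparison: dividing the top homogeneous part by $\{\xi,\eta\}$ in the polynomial ring replaces the paper's contraction trick, and the additivity-versus-homogeneity step ($2^N P_N=2P_N$) is slicker than inspecting individual components. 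What the paper's version buys is that localization is a non-issue (everything is tested against genuine compactly supported $f,g$ from the start); what yours buys is transparency of the "compare coefficients" step, which becomes a literal polynomial identity. The one point you should make explicit to close the rigor gap you flag: after inserting a cutoff $\chi$, every distributional pairing that occurs is of the form $(\text{monomial in }\xi,\eta,\zeta)\cdot\widehat{v}(\xi+\eta+\zeta)$ with $v$ a fixed compactly supported distribution built from $\chi$ and the $S^{\vec\mu}$ (only finitely many of which survive against $\operatorname{supp}\chi$, by the locally finite order in Lemma~\ref{Peetre}); hence on the affine slice $\xi+\eta+\zeta=k$ the identity \emph{is} a polynomial identity in $(\xi,\eta)$, the cutoff corrections enter only in degree $\le N+1$, and the top component reads $\{\xi,\eta\}\bigl(P^\chi_N(\xi+\eta;k)-P^\chi_N(\xi;k)-P^\chi_N(\eta;k)\bigr)=0$ with $P^\chi_N(\xi;k)=\sum_{|\vec\mu|=N}(i\xi)^{\vec\mu}\,\widehat{\chi^3S^{\vec\mu}}(k)$. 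Using the permutation symmetry of the $S^{\vec\mu}$ from Lemma~\ref{Peetre} (so that distinct monomials have independent coefficients), this gives $\chi^3S^{\vec\mu}=0$ for $|\vec\mu|=N\ge2$, and letting $\chi$ vary yields $S^{\vec\mu}=0$, completing the downward induction.
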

\begin{proof}
We express the condition 
\be\label{nocoorder}
D(\{f,g\}) - \mathrm{ad}^{*}_{f}D(g) + \mathrm{ad}^{*}_{g}D(f) = 0
\ee
that $D$ be a derivation
in local Darboux coordinates.
Using the summation convention and writing $f_{\vec{\mu}}$ for $\partial_{\vec{\mu}}f$,
we have $D_{U_i}(f) = f_{\vec{\mu}}S^{\vec{\mu}}$.
The formula 
\[\mathrm{ad}^{*}_{f} \phi = \omega^{\sigma\tau}f_{\sigma} \phi_{\tau}\]
follows from 
$\mathrm{ad}^{*}_{f} \phi (g) = \phi(-\omega^{\sigma\tau} 
f_{\sigma}g_{\tau} ) = 
\omega^{\sigma\tau}\partial_{\tau} (f_{\sigma} \phi)(g)$
and 
the antisymmetry of $\omega$, which implies $\omega^{\sigma\tau}(f_{\sigma\tau}) = 0$.
Writing $S^{\vec{\mu}}_{,\tau}$ for
$\partial_{\tau} S^{\vec{\mu}}$, 
and using that $\omega^{\sigma\tau}$ is constant in Darboux coordinates,
we find the
local expression for equation~(\ref{nocoorder})
\be \label{coorder}
\omega^{\sigma\tau} \Big(
\big(\sum_{\vec{\alpha} \cup \vec{\beta} = \vec{\mu}} 
f_{\sigma \vec{\alpha}}g_{\tau\vec{\beta}} \big)
S^{\vec{\mu}}
- f_{\sigma} g_{\vec{\mu}} S^{\vec{\mu}}_{,\tau}
-  f_{\sigma} g_{\tau \vec{\mu}} S^{\vec{\mu}}
+  g_{\sigma} f_{\vec{\mu}} S^{\vec{\mu}}_{,\tau}
+  g_{\sigma} f_{\tau \vec{\mu}} S^{\vec{\mu}} \Big) = 0\,,
\ee
where 
$\sum_{\vec{\alpha} \cup \vec{\beta} = \vec{\mu}}$ denotes the sum over the $2^{N}$
decompositions of $\vec{\mu} = (\mu_{1}, \ldots \mu_{N})$
into $\vec{\alpha} = (\mu_{i_1}, \ldots, \mu_{i_{K}})$
and $\vec{\beta} = (\mu_{j_{1}}, \ldots, \mu_{j_{N-K}})$,
where $1 \leq i_1 < \ldots, i_{K}\leq N$ and
$1 \leq j_1 < \ldots < j_{N-K} \leq N$ complement 
each other to $1 < \ldots < N$. 
We can write (\ref{coorder}) as
\be\label{defq}
f_{\vec{\mu}}g_{\vec{\nu}} Q^{\vec{\mu};\vec{\nu}} = 0
\ee
for the distributions
\be \label{defQuitgeschreven}
Q^{\vec{\mu};\vec{\nu}} := 
{\textstyle \frac{(|\vec{\mu}| + |\vec{\nu}|-2)!}{|\vec{\mu}|!|\vec{\nu}|!}}
\sum_{\sigma \cup \vec{\alpha} = \vec{\mu}} 
\sum_{\tau \cup \vec{\beta} = \vec{\nu}} 
\omega^{\sigma\tau} S^{\vec{\alpha}\vec{\beta}}
+ \delta_{|\vec{\nu}|, 1} Y^{\nu;  \vec{\mu}}
- \delta_{|\vec{\mu}|, 1} Y^{\mu;  \vec{\nu}}
\ee
where the first term on the r.h.s.\ is zero for $|\vec{\mu}| = 0$ or $|\vec{\nu}| = 0$, and where
\be\label{defY} 
Y^{\nu;\vec{\mu}}
:=
\omega^{\nu\tau}S^{\vec{\mu}}_{,\tau} + 
{\textstyle \frac{1}{|\vec{\alpha}|+1}}
\sum_{\tau \cup \vec{\alpha} = \vec{\mu}} \omega^{\nu\tau}S^{\vec{\alpha}}\,.
\ee
Since the distributions $Q^{\vec{\mu};\vec{\nu}}$
are manifestly independent of $f$ and $g$, and 
symmetric under permutation of
the $\vec{\mu}$ and $\vec{\nu}$ indices separately, the fact that
(\ref{defq}) holds for arbitrary
$f$ and $g$ implies 
\be\label{vergelijkingdienulis}
Q^{\vec{\mu};\vec{\nu}} = 0\,.
\ee
%
In order to show that $D$ is a differential operator of degree at most $1$,
we will show that, for any multi-index $\vec{\gamma}$ of length $\ge 2$,
the distribution $S^{\vec{\gamma}}$ is identically zero.

There exists a non-trivial decomposition $\vec{\alpha}\cup\vec{\beta}=\vec{\gamma}$
with $|\vec{\alpha}|=r\ge 1$ and $|\vec{\beta}|=s\ge 1$, since 
$|\vec{\gamma}|\ge 2$.
We know that $Q^{\sigma\cup\vec{\alpha};\tau\cup\vec{\beta}}=0$
for all $\sigma,\tau\in\{1,\dots,2n\}$.
Moreover, for these distributions,
the contribution of $Y$ in the expression \eqref{defQuitgeschreven} is zero,
so, denoting by $K$ the constant $\frac{(r+s)!}{(r+1)!(s+1)!}$,
we have
\begin{equation}\label{defQuitgeschrevenhoog2}
Q^{\sigma\cup\vec{\alpha};\tau\cup\vec{\beta}}=K
\sum_{\sigma' \cup \vec{\alpha}' =\sigma \cup \vec{\alpha}} 
\sum_{\tau' \cup \vec{\beta}' = \tau \cup \vec{\beta}} 
\omega^{\sigma'\tau'} S^{\vec{\alpha}'\vec{\beta}'}.
\end{equation}
We contract $Q^{\sigma\cup\vec{\alpha};\tau\cup\vec{\beta}}$ 
with $\omega_{\tau\sigma} = - \omega^{\tau\sigma}$, the inverse
of $\omega$ satisfying 
$\omega_{\tau\sigma}\omega^{\sigma \gamma} = \delta_{\tau}^{\gamma}$.
The result is 
\begin{equation}\label{K}
\omega_{\tau\sigma}Q^{\sigma\cup\vec{\alpha};\tau\cup\vec{\beta}}
=(2n+r + s)K S^{\vec{\alpha}\vec{\beta}}.
\end{equation}
Indeed, there are 4 types of terms on the r.h.s.\ of (\ref{defQuitgeschrevenhoog2})
when contracted with $\omega_{\sigma\tau}$, 
according to whether $\sigma'$ is equal to $\sigma$ or not, and
whether $\tau'$ is equal to $\tau$ or not.

If $\sigma \neq \sigma'$ and $\tau \neq \tau'$, then
$\sigma$ is in $\vec{\alpha}'$ and $\tau$ is in $\vec{\beta}'$, so the term is
symmetric under $\sigma \leftrightarrow \tau$. It therefore
contracts to zero with the antisymmetric tensor $\omega_{\tau\sigma}$.

If $\sigma = \sigma'$ and $\tau = \tau'$, then because
$\omega_{\tau\sigma}\omega^{\sigma\tau} = 2n$,
contraction yields the term $2nKS^{\vec{\alpha}\vec{\beta}}$.
There is precisely one such term.

If $\sigma = \sigma'$ and $\tau \neq \tau'$, say $\tau' = \beta_{p}$, then 
$\omega_{\tau\sigma}\omega^{\sigma\beta_p} = \delta_{\tau}^{\beta_p}$.
Together with
the symmetry of $S^{\vec{\mu}}$ under permutations of $\vec{\mu}$, this
implies that 
\[\omega_{\tau\sigma}\omega^{\sigma\beta_p}S^{\vec{\alpha}\tau\beta_1 \ldots \hat{\beta}_p \ldots \beta_{s}}
= S^{{\vec{\alpha}\vec{\beta}}}\,.
\]
Since there are $s$ such terms, their contribution is 
$s K S^{{\vec{\alpha}\vec{\beta}}}$.
By a similar reasoning, the terms with $\sigma \neq \sigma'$ and $\tau = \tau'$
have a contribution of
$r K S^{{\vec{\alpha}\vec{\beta}}}$.

The equality \eqref{K}, combined with \eqref{vergelijkingdienulis},
now ensures that $S^{\vec{\gamma}} =S^{\vec{\alpha}\vec{\beta}}=0$
for $\vec{\gamma}$ of length $\geq 2$,
so that $D$ is a differential operator of order at most 1. 
\end{proof}

We continue this line of reasoning to obtain the following more explicit expression for these
derivations. 

\begin{Lemma}\label{derivationsII}
If a differential operator
$D\colon C^{\infty}_{c}(X) \rightarrow C^{\infty}_{c}(X)'$ 
is a derivation, then in local Darboux coordinates,
it is given by a first order differential operator
\[ D_{U_i}(f) = S^{\mu} \partial_{\mu}f +c f\]
for $c\in\RR$ and distributions $S^{\mu}$ with 
\be\label{tw}
\omega^{\nu\tau}\partial_{\tau}S^{\mu} - 
\omega^{\mu\tau} \partial_{\tau} S^{\nu} = c \omega^{\mu\nu}\,.
\ee
The derivation $D$ is antisymmetric if and only if $c=0$.
\end{Lemma}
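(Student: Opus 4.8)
\emph{Plan.} By Lemma~\ref{derivations} the operator $D$ has order at most $1$, so in a Darboux chart $U_i$ we may write $D_{U_i}(f)=S^{\mu}\partial_{\mu}f+S^{*}f$, where $S^{*}=S^{\vec\mu}$ (for the length-zero index $\vec\mu=*$) is the order-zero coefficient, the coefficients $S^{\vec\gamma}$ with $|\vec\gamma|\ge 2$ having already been shown to vanish. The strategy is to feed this truncated expansion back into the vanishing relations $Q^{\vec\mu;\vec\nu}=0$ from \eqref{vergelijkingdienulis}, which we now only need for multi-indices of length $0$ and $1$. Throughout, the decisive structural fact is that $\omega^{\sigma\tau}$ is an invertible constant matrix in Darboux coordinates.

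First I would show that $S^{*}$ is locally constant. Taking $\vec\mu=*$ and $\vec\nu=\nu$ in \eqref{defQuitgeschreven}, the first term is suppressed (it carries the factor declared to vanish when $|\vec\mu|=0$) and the only surviving contribution is $Y^{\nu;*}$. By \eqref{defY} the sum over decompositions $\tau\cup\vec\alpha=*$ is empty, so $Y^{\nu;*}=\omega^{\nu\tau}\partial_{\tau}S^{*}$. Hence $Q^{*;\nu}=0$ reads $\omega^{\nu\tau}\partial_{\tau}S^{*}=0$ for every $\nu$, and invertibility of $\omega^{\nu\tau}$ forces $\partial_{\tau}S^{*}=0$; thus $S^{*}=c$ for a constant $c\in\R$, which is the order-zero coefficient in the statement. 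Next, taking $\vec\mu=\mu$ and $\vec\nu=\nu$ both of length $1$, the first term of \eqref{defQuitgeschreven} collapses to $\omega^{\mu\nu}S^{*}=c\,\omega^{\mu\nu}$, while $Y^{\nu;\mu}=\omega^{\nu\tau}\partial_{\tau}S^{\mu}+c\,\omega^{\nu\mu}$ and symmetrically for $Y^{\mu;\nu}$. Collecting terms and using $\omega^{\nu\mu}=-\omega^{\mu\nu}$, the relation $Q^{\mu;\nu}=0$ becomes exactly
\[
\omega^{\nu\tau}\partial_{\tau}S^{\mu}-\omega^{\mu\tau}\partial_{\tau}S^{\nu}=c\,\omega^{\mu\nu},
\]
which is \eqref{tw}.

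For the antisymmetry criterion I would compute the symmetric part of $\psi_{D}(f,g)=D(f)(g)$ from \eqref{psiD}. Contracting \eqref{tw} with the inverse $\omega_{\mu\nu}$, using $\omega_{\mu\nu}\omega^{\nu\tau}=\delta_{\mu}^{\tau}$ together with $\omega_{\mu\nu}\omega^{\mu\nu}=-2n$ (both consequences of the conventions fixed in the proof of Lemma~\ref{derivations}), yields the divergence identity $\partial_{\mu}S^{\mu}=-nc$. Then, for $f,g$ supported in $U_i$,
\[
\psi_{D}(f,g)+\psi_{D}(g,f)=S^{\mu}\big(\partial_{\mu}(fg)\big)+2c\int_{X}fg\,\vol=(n+2)c\int_{X}fg\,\vol,
\]
where the first equality uses $\partial_{\mu}(fg)=(\partial_{\mu}f)g+(\partial_{\mu}g)f$ and the order-zero contributions $c\int fg\,\vol$ from each term, and the second uses integration by parts on the distributions $S^{\mu}$ together with $\partial_{\mu}S^{\mu}=-nc$. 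Since $n\ge 1$ the coefficient $(n+2)$ is nonzero and $\int_{X}fg\,\vol$ is not identically zero (take $f=g$ a bump function), so this symmetric part vanishes for all $f,g$ in the chart if and only if $c=0$; hence $D$ is antisymmetric if and only if $c=0$. The global converse—that $c=0$ in every chart makes $\psi_{D}$ antisymmetric—follows from a partition of unity together with the diagonality of $\psi_{D}$ established in Proposition~\ref{ookgoonaal}.

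All the calculations are short once Lemma~\ref{derivations} is available, so the only genuine difficulty is bookkeeping: one must read the degenerate cases of \eqref{defQuitgeschreven}–\eqref{defY} in lengths $0$ and $1$ correctly (empty decomposition sums, the suppressed first term, the Kronecker deltas), and one must carry out the contraction $\partial_{\mu}S^{\mu}=-nc$ accurately, since it is exactly this constant that turns the symmetric part into a nonzero multiple of $c$ rather than of $\partial_{\mu}S^{\mu}$.
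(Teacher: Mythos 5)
Your proposal is correct and follows essentially the same route as the paper's own proof: you specialize the relations $Q^{\vec{\mu};\vec{\nu}}=0$ to lengths $0$ and $1$ to obtain $S^*=c$ constant and equation~\eqref{tw}, contract with $\omega_{\mu\nu}$ to get $\partial_{\mu}S^{\mu}=-nc$, and evaluate the symmetric part of $\psi_{D}$ on products $fg$ to conclude that antisymmetry is equivalent to $c=0$. The only cosmetic difference is that the paper also records (as a consistency check) that $Q^{\mu_1\mu_2;\nu_1}$ vanishes identically, which your argument correctly omits as unnecessary.
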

\begin{Remark}
Contracting both sides of \eqref{tw} with $\omega_{\mu\nu}$,
one obtains \[\partial_{\mu} S^{\mu} = -nc\,.\]
In particular, $\partial_{\mu} S^{\mu} = 0$ if $D$ is antisymmetric.
\end{Remark}

\begin{proof}
The requirement that $D$ be a derivation is equivalent to 
eqn.~(\ref{vergelijkingdienulis}) for the $Q^{\vec{\mu};\vec{\nu}}$
of eqn.~(\ref{defQuitgeschreven}).
Using the antisymmetry of $\omega$, we see that 
\[
Q^{\mu_1\mu_2;\nu_1} = \frac{1}{2}\big(
\omega^{\mu_1\nu_1}S^{\mu_2}
+ \omega^{\mu_2\nu_1}S^{\mu_1}
+ \omega^{\nu_1\mu_1}S^{\mu_2}
+ \omega^{\nu_1\mu_2}S^{\mu_2}
\big)
\]
vanishes identically, as does $Q^{\mu_1;\nu_1\nu_2}$.
For $Q^{\mu;\nu}$ and $Q^{*;\mu} = - Q^{\mu;*}$, eqn.~(\ref{defQuitgeschreven}) 
yields the nontrivial equations
\begin{equation} \label{sympconstraint1}
Q^{\mu;\nu} = \omega^{\nu\tau}\partial_{\tau}S^{\mu} - 
\omega^{\mu\tau} \partial_{\tau} S^{\nu} - \omega^{\mu\nu} S^{*} = 0
\end{equation} 
and
\begin{equation} \label{sympconstraint0}
Q^{*;\mu} =  \omega^{\mu\tau}\partial_{\tau}S^{*} = 0\,.
\end{equation} 
Because $\omega$ is invertible,
the latter equation 
implies that $S^*$ is a constant, ${S^*=c}$. 
(A distribution satisfies $\partial_{\mu} \phi = 0$ for all $\mu$
if and only if it is constant). 
Contracting equation (\ref{sympconstraint1})
with $\omega_{\mu\nu}$ and using antisymmetry of $\omega$,
together with $\omega_{\mu\nu}\omega^{\nu \sigma} = \delta_{\mu}^{\sigma}$
and $\delta_{\mu}^{\mu} = \mathrm{tr}(\one) = 2n$, we obtain
$2\partial_{\mu}S^{\mu} + 2nS^* = 0$, so that
the divergence $ \partial_{\mu}S^{\mu} = -nc$ is constant.
If $D$ is antisymmetric, the equation 
\begin{eqnarray*}
D(f)(g) + D(g)(f) &=& (S^{\mu} (f_{\mu}g) + S^* (fg)) + (S^{\mu} (fg_{\mu}) + S^* (fg))\\
&=& S^{\mu}(\partial_{\mu}(fg)) + 2 S^{*}(fg)\\
&=& (2S^* - \partial_{\mu}S^{\mu})(fg) = 0
\end{eqnarray*}
for all $f,g$ yields $S^* = \frac{1}{2}\partial_{\mu}S^{\mu}$,
and because we already had $S^* = -\frac{1}{n}\partial_{\mu}S^{\mu}$,
we get $S^* = c = 0$.
\end{proof}

\subsection{Second cohomology of the Poisson Lie algebra}

Following Roger \cite[\S 9]{Roger1995}, we now define maps
\be
H^1_{\mathrm{dR}}(X) \rightarrow H^2(C^{\infty}_{c}(X),\R) \label{rogerc}
\ee
and
\be
H^1_{\mathrm{dR},c}(X) \rightarrow H^2(C^{\infty}(X),\R)\,, \label{rogernc}
\ee
from the (compactly supported) de Rham cohomology of $X$ into the
continuous Lie algebra cohomology of
the Poisson Lie algebras $C^{\infty}_{c}(X)$ and $C^{\infty}(X)$.
We then use the preceding results to show that these maps are isomorphisms.
\begin{Proposition}\label{Rogersmap}
If $\alpha \in \Omega^1(X)$ is closed, then 
\be\label{psal} 
\psi_{\alpha}(f,g) := \int_{X} f (i_{X_{g}}\alpha) \,\omega^n/n!
\ee
defines a 2-cocycle on 
$C^{\infty}_{c}(X)$, which is a coboundary if $\alpha$ is exact, $\alpha = d h$ with $h \in \Omega^0(X)$. 
If $\alpha$ is compactly supported, then $\psi_{\alpha}$ extends to a cocycle on 
$C^{\infty}(X)$, which is a coboundary if $\alpha = d h$ with $h\in \Omega^0_{c}(X)$.
In particular, the correspondence
\be \label{rogersmapp}
[\alpha] \mapsto [\psi_{\alpha}]\,.
\ee
yields well defined maps \eqref{rogerc} and \eqref{rogernc}.
\end{Proposition}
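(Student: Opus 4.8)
The plan is to verify the three assertions in order: (1) $\psi_\alpha$ is skew-symmetric and satisfies the cocycle identity, (2) it is a coboundary when $\alpha=dh$, and (3) the compactly supported refinement. For (1), the cleanest route is to rewrite $\psi_\alpha$ in a manifestly antisymmetric form. First I would use Lemma~\ref{lem:4termcyc}, eqn.~\eqref{kleintje}, applied to the vector field $v=X_g$, to turn the integrand into a wedge product:
\[
f(i_{X_g}\alpha)\,\omega^n/n! = f\,\alpha\wedge i_{X_g}\omega\wedge\omega^{n-1}/(n-1)! = -f\,\alpha\wedge dg\wedge\omega^{n-1}/(n-1)!,
\]
using $i_{X_g}\omega=-dg$. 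Thus $\psi_\alpha(f,g)=-\int_X f\,\alpha\wedge dg\wedge\omega^{n-1}/(n-1)!$. Antisymmetry is then seen by an integration by parts: since $\alpha$ is closed and the top-degree form $d(fg\,\alpha\wedge\omega^{n-1}/(n-1)!)$ integrates to zero (the integrand has compact support as $f,g\in C^\infty_c(X)$), one gets $\int_X f\,\alpha\wedge dg\wedge\omega^{n-1}/(n-1)! = -\int_X g\,\alpha\wedge df\wedge\omega^{n-1}/(n-1)!$, whence $\psi_\alpha(f,g)=-\psi_\alpha(g,f)$.

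For the cocycle identity, the most economical approach is to exhibit $\psi_\alpha$ as $\psi_D$ for a skew-symmetric derivation $D$ and invoke Proposition~\ref{derivcoc}; equivalently, identify the functional $S_\alpha$ on $\Omega^1(X)/\delta\Omega^2(X)$ and check it is well-defined. Concretely, I would verify directly that $D_\alpha(f):=\psi_\alpha(f,\cdot)$ is a derivation into $C^\infty_c(X)'$. Writing $\psi_\alpha(f,g)=\int_X f\,(i_{X_g}\alpha)\,\omega^n/n!$, the derivation property $\psi_\alpha(\{f_1,f_2\},g) = \psi_\alpha(f_1,\{f_2,g\}_{\text{(adjoint)}}) - \cdots$ reduces to the Leibniz-type identity $i_{X_{\{f,g\}}}\alpha$ expressed via $L_{X_f}$ and the closedness $d\alpha=0$; since $\alpha$ is closed, $L_{X_g}\alpha = d(i_{X_g}\alpha)$, and the bracket of hamiltonian vector fields $[X_f,X_g]=X_{\{f,g\}}$ supplies exactly the three terms of the cocycle condition after one integration by parts. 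The closedness of $\alpha$ is the crucial hypothesis that makes the Lie-derivative terms collapse.

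For the coboundary claim (2), when $\alpha=dh$ I would produce an explicit $1$-cochain $\chi$ with $\psi_{dh}=\dd\chi$. The natural candidate is $\chi(f):=\int_X h\,f\,\omega^n/n!$ (up to sign); computing $\dd\chi(f,g)=-\chi(\{f,g\})=-\int_X h\,\{f,g\}\,\omega^n/n!$ and comparing with $\psi_{dh}(f,g)=\int_X f\,(i_{X_g}dh)\,\omega^n/n! = \int_X f\,(L_{X_g}h)\,\omega^n/n!=\int_X f\{h,g\}\,\vol$, one matches the two using the invariance of $\vol$ under $X_g$ and an integration by parts, i.e.\ $\int_X f\{h,g\}\vol = -\int_X h\{f,g\}\vol$ modulo boundary terms that vanish by compact support. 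For (3), the compactly supported version follows by the same formulas: if $\alpha\in\Omega^1_c(X)$ is closed then the integrand $f\,\alpha\wedge dg\wedge\omega^{n-1}/(n-1)!$ has compact support even for $f,g\in C^\infty(X)$ (the support of $\alpha$ cuts it off), so $\psi_\alpha$ extends continuously to $C^\infty(X)$; and if $\alpha=dh$ with $h\in\Omega^0_c(X)$, the same $\chi(f)=\int_X h\,f\,\vol$ is now defined on all of $C^\infty(X)$, giving the coboundary. The well-definedness of \eqref{rogersmapp} is then immediate from (2) and (3).

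The main obstacle I anticipate is the bookkeeping in the cocycle identity: tracking the three terms and confirming that the contributions involving $d\alpha$ cancel precisely because $\alpha$ is closed. Everything else is a routine integration-by-parts argument exploiting $L_{X_f}\vol=0$ and compact support to discard boundary terms; the only genuine input is the closedness hypothesis, which enters through $L_{X_g}\alpha=d\,i_{X_g}\alpha$.
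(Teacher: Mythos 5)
Your proposal is correct and follows essentially the same route as the paper: antisymmetry from Stokes' theorem using $d\alpha=0$, the cocycle identity from the Leibniz/derivation structure supplied by closedness of $\alpha$, the explicit primitive $\chi(f)=\int_X hf\,\omega^n/n!$ when $\alpha=dh$, and the compactly supported variants handled by the same formulas. The only difference is organizational: the paper introduces the symplectic vector field $S$ with $i_S\omega=-\alpha$ and the $L_S$-invariant pairing $\langle f\vee g\rangle=\int_X fg\,\omega^n/n!$, which turns both the antisymmetry and the three-term cancellation you flag as ``the main obstacle'' into short algebraic manipulations rather than repeated integrations by parts on forms.
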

\begin{proof}
If $\alpha$ is closed, then the vector field $S$ defined by
$i_{S}\omega = -\alpha$ is symplectic, i.e. $L_{S}\omega = 0$.
In the following, we either have $f,g \in C^{\infty}_{c}(X)$ and $S \in \mathrm{sp}(X)$
or $f,g \in C^{\infty}(X)$ and $S \in \mathrm{sp}_{c}(X)$.
Either way, we have $L_{S}\{f,g\} = \{L_{S}f,g\} + \{f,L_{S}g\}$, so
$L_S$ defines a derivation into $C^{\infty}_{c}(X)$.
If we define a linear functional $\langle \,\cdot \, \rangle$
on the symmetric tensor product $C^{\infty}_{c}(X) \vee C^{\infty}(X)$ by 
\[
\langle f \vee g \rangle := \int_{X} fg \omega^{n}/n!\,,
\]
then $\langle\,\cdot\,\rangle$ is $L_{S}$-invariant in the sense that 
\be \label{Sasym}
\langle L_{S}f \vee g\rangle + \langle f \vee L_{S} g \rangle = 0\,,
\ee
because $\int_{X}(L_{S}f) g \omega^n + \int_{X}f (L_{S}g) \omega^n = \int_{X}L_{S}(fg\omega^n) = 0$. 
Since $i_{X_{f}}\alpha = -L_{S}f$, we can write
\be\label{alternatief}
\psi_{\alpha}(f,g) = - \langle f \vee L_{S}g\rangle\,.
\ee
Equation \eqref{Sasym} then ensures that 
$\psi_{\alpha}$ is skew symmetric, 
$\psi_{\alpha}(f,g) + \psi_{\alpha}(g,f) = 0$.
The cocycle identity follows from
\begin{eqnarray*}
\langle f \vee L_{S}\{g,h\} \rangle + \langle g\vee L_{S}\{h,f\} \rangle + \langle h\vee L_{S}\{f,g\} \rangle &=&\\
\langle f\vee (\{L_{S}g,h\} + \{g, L_{S}h\})  \rangle - \langle L_{S}g\vee \{h,f\} \rangle - \langle L_{S}h\vee \{f,g\} \rangle &=&\\
\langle f\vee (\{L_{S}g,h\} + \{g, L_{S}h\})  \rangle - \langle \{L_{S}g,h\}\vee f \rangle - \langle \{g, L_{S}h\}\vee f \rangle &=& 0\,,
\end{eqnarray*}
where in the last step, we used that  \eqref{Sasym} with $S = X_{h}$ implies 
$\langle \{h,f\} \vee g\rangle + \langle f, \{h,g\}\rangle = 0$.
If $\alpha$ is exact, say $\alpha = dh$ for $h \in C^{\infty}(X)$ 
not necessarily compactly supported, then $S = X_{h}$ is a hamiltonian vector field, and
\be\label{bijdewildespinnen}
\psi_{dh}(f,g) = -\langle f\vee L_{X_{h}} g\rangle = \langle f \vee L_{X_{g}} h \rangle = 
\langle \{f, g\} \vee h\rangle 
\ee
shows that $\psi_{dh} = -\dd \chi_{h}$ for the 1-cochain $\chi_{h}(f) := \langle f \vee h \rangle$.
\end{proof}

We can define a similar map for the Lie algebra $\ham(X)$, but here we have to be slightly careful;
every cocycle $\psi_{\alpha}$ on $C^{\infty}(X)$ yields a cocycle on $\ham(X)$, but
even if $\psi_{\alpha}$ is a coboundary on $C^{\infty}(X)$, it need not be trivial 
on $\ham(X)$. It turns out that if $\alpha = d h$ with $h \in \Omega^0_{c}(X)$, then
$[\psi_{\alpha}] = 0$ in $H^2(C^{\infty}(X),\R)$, but
$[\psi_{\alpha}] = \langle h \rangle [\psi_{KS}]$ in $H^2(\ham(X),\R)$, 
where 
\be \label{gemiddelde}
\langle h \rangle := \lambda(h) = \int_{X} h \omega^n/n!\,.
\ee

\begin{Proposition}\label{hamcocycles}
Let $X$ be a symplectic manifold.
Assigning to the closed 1-form $\alpha \in \Omega^1_{c}(X)$ the cocycle 
\be\label{voorham}
\psi_{\alpha}(X_f, X_g) = \int_{X}f\alpha(X_g)\omega^n/n!
\ee
yields a well defined linear map 
$Z^1_{ c}(X)/d\Omega^0_{c,0}(X) \rightarrow H^2(\ham(X),\R)$,
where 
\[
Z^1_{c} (X):= \mathrm{Ker}(d\colon \Omega^1_{c}(X) \rightarrow \Omega^2_{c}(X))
\]
and
\[\Omega^0_{c, 0}(X) := \Big\{h \in \Omega^0_{c}(X)\,;\, \langle h \rangle = 0\Big\}\,.\]
In particular, we have a map $H^1_{\mathrm{dR}}(X) \rightarrow H^2(\ham(X),\R)$ if $X$
is compact.
\end{Proposition}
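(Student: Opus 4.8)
The plan is to establish the statement in three stages: that $\psi_\alpha$ is a well-defined bilinear form on $\ham(X)$, that it is a continuous $2$-cocycle, and that the resulting assignment $[\alpha]\mapsto[\psi_\alpha]$ descends to the quotient $Z^1_c(X)/d\Omega^0_{c,0}(X)$. For well-definedness the issue is that a hamiltonian vector field $X_f$ determines $f$ only up to a locally constant function, whereas \eqref{voorham} uses $f$ itself. The dependence on the second argument causes no trouble, since the formula sees $g$ only through $X_g$. For the first argument I would replace $f$ by $f+\ell$ with $d\ell=0$ and show that the difference $\int_X \ell\,\alpha(X_g)\vol$ vanishes. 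Using \eqref{kleintje} with $v=X_g$ together with $i_{X_g}\omega=-dg$, one rewrites $\ell\,\alpha(X_g)\vol=-\ell\,\alpha\wedge dg\wedge\omega^{n-1}/(n-1)!$, and since $\ell$, $\alpha$ and $\omega$ are all closed this equals the exact form $d\big(\ell g\,\alpha\wedge\omega^{n-1}/(n-1)!\big)$. As $\alpha$ is compactly supported, Stokes' theorem gives $\int_X \ell\,\alpha(X_g)\vol=0$, so $\psi_\alpha$ does not depend on the chosen lift.

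For the cocycle property I would exploit that the canonical projection $\pi\colon C^{\infty}(X)\to\ham(X)$, $f\mapsto X_f$, is a continuous surjective Lie algebra homomorphism, and that by construction $\pi^*\psi_\alpha$ is precisely the cocycle on $C^{\infty}(X)$ produced in Proposition~\ref{Rogersmap} (which applies because $\alpha$ is closed and compactly supported). Since pullback along a Lie algebra homomorphism commutes with the Chevalley--Eilenberg differential, $\pi^*(\dd\psi_\alpha)=\dd(\pi^*\psi_\alpha)=0$; surjectivity of $\pi$ makes $\pi^*$ injective on alternating forms, so $\dd\psi_\alpha=0$ on $\ham(X)$. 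Linearity in $\alpha$ is immediate from the formula, and continuity follows because $X_g\mapsto\alpha(X_g)$ is continuous and $\alpha$ is integrated against the compactly supported datum $f$.

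The heart of the proof is to show that the class $[\psi_\alpha]$ depends only on $[\alpha]\in Z^1_c(X)/d\Omega^0_{c,0}(X)$, i.e.\ that $[\psi_{dh}]=0$ whenever $h\in\Omega^0_{c,0}(X)$. Proposition~\ref{Rogersmap}, via \eqref{bijdewildespinnen}, already supplies a primitive on $C^{\infty}(X)$: one has $\psi_{dh}=-\dd\chi_h$ for the $1$-cochain $\chi_h(f)=\langle f\vee h\rangle=\int_X fh\,\vol$. The remaining task is to descend this primitive to $\ham(X)=C^{\infty}(X)/\R\one$. Here $\chi_h(\one)=\langle h\rangle=0$ by the defining condition on $\Omega^0_{c,0}(X)$, so by the mechanism of Proposition~\ref{kaf} the cochain $\chi_h$ passes to a $1$-cochain on $\ham(X)$ whose coboundary is $\psi_{dh}$; hence $[\psi_{dh}]=0$. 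In the compact case this gives the advertised map $H^1_{\mathrm{dR}}(X)\to H^2(\ham(X),\R)$, since for compact $X$ every exact $1$-form is $dh$ for some $h$ of zero average, so $d\Omega^0_{c,0}(X)$ is exactly the space of exact forms and $Z^1_c(X)/d\Omega^0_{c,0}(X)=H^1_{\mathrm{dR}}(X)$.

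I expect this descent step to be the main obstacle, because a coboundary on $C^{\infty}(X)$ need not remain a coboundary on $\ham(X)$: the obstruction is precisely a multiple of the Kostant--Souriau class $[\psi_{KS}]$, and the computation $\psi_{dh}=\langle h\rangle[\psi_{KS}]$ in $H^2(\ham(X),\R)$ shows that the condition $\langle h\rangle=0$ is exactly what forces this multiple to vanish. For connected $X$ this completes the argument; in the compact but possibly disconnected case one additionally invokes Corollary~\ref{corr:KSextension}, which makes $[\psi_{KS}]$ trivial on each compact component, so that $h$ may be normalised to zero average componentwise without changing $dh$.
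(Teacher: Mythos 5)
Your proposal is correct and follows essentially the same route as the paper: it descends the Roger cocycle from $C^{\infty}(X)$ to $\ham(X)$ by checking it kills the constants, inherits the cocycle identity from Proposition~\ref{Rogersmap}, and trivialises $[\psi_{dh}]$ for $\langle h\rangle=0$ by pushing the primitive $\chi_h$ of \eqref{bijdewildespinnen} down to $\ham(X)$ (your cochain $\int_X fh\,\vol$ agrees with the paper's $\int_X(f_x-f)h\,\vol$ precisely under the hypothesis $\langle h\rangle=0$, which is also how the paper isolates the obstruction $\langle h\rangle[\psi_{KS}]$). The only slips are cosmetic: it is $\alpha$, not $f$, that is compactly supported in the continuity remark, and the identification $\ham(X)=C^{\infty}(X)/\R\one$ requires $X$ connected, a point the paper's own proof also glosses over.
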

\begin{proof}
Equation \eqref{alternatief} 
shows that $\psi_{\alpha}(f,g)$
vanishes if $g = 1$, so that $\psi_{\alpha}$ defines a cocycle on $\ham(X)$.
If $\alpha = d h$, then we choose a point $x \in X$, and define the 1-cochain 
$\chi_{h}(X_f) := \int_{X} (f_x - f)h \omega^n/n!$ on $\ham(X)$.
Combining
\[\dd \chi_{h}(X_f,X_g) = \int_{X} \{f,g\} h \omega^n/n! - \{f, g\}_{x}\int_{X} h \omega^n/n! \]
with equations \eqref{bijdewildespinnen}, \eqref{eq:KScocycle}, and \eqref{gemiddelde},
we find $[\psi_{dh}] = \langle h \rangle [\psi_{KS}]$.
\end{proof}

From Prop.~\ref{derivcoc} we know that every
continuous 2-cocycle of the compactly supported Poisson Lie algebra $C^{\infty}_{c}(X)$ comes from a skew symmetric derivation.
These derivations are explicitly described in 
Lemmas \ref{Peetre}, \ref{derivations} and \ref{derivationsII}.
We see that the restriction of the cocycle $\psi$ to a 
Darboux coordinate patch $U_i$ can be written as
\be\label{explicit}
\psi(f,g) = S_{U_i}^{\mu}(g\partial_{\mu}f)
\ee
for distributions $S_{U_i}^{\mu}$ on $U_i$
satisfying 
\be
\label{es}
\omega^{\nu\tau}\partial_{\tau}S_{U_i}^{\mu} - \omega^{\mu\tau}\partial_{\tau}S_{U_i}^{\nu} = 0.
\ee
We now use this explicit description of 2-cocycles 
to show that the map (\ref{rogerc}) is an isomorphism.

\begin{Theorem}\label{HoofdZaak}
For any symplectic manifold $X$, the map $[\al]\mapsto [\ps_\al]$ described in Prop.~\ref{Rogersmap}
is an isomorphism $H^1_{\mathrm{dR}}(X) \simeq H^2(C^{\infty}_{c}(X), \R)$.
\end{Theorem}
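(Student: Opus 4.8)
The plan is to reinterpret continuous $2$-cocycles on $C^{\infty}_{c}(X)$ as closed \emph{distributional} $1$-forms (de Rham $1$-currents), so that the Roger map $[\alpha]\mapsto[\psi_\alpha]$ becomes the de Rham comparison isomorphism between distributional and smooth $1$-cohomology. The groundwork is already in place: by Proposition~\ref{derivcoc}, $H^2(C^{\infty}_{c}(X),\R)$ is the space of continuous skew-symmetric derivations $D\colon C^{\infty}_{c}(X)\to C^{\infty}_{c}(X)'$ modulo the inner ones, and by Lemmas~\ref{Peetre}, \ref{derivations} and~\ref{derivationsII} every such $D$ is, in a Darboux chart $U_i$, the first order operator $D_{U_i}(f)=S^{\mu}\partial_{\mu}f$ with distributional coefficients $S^{\mu}$ subject to the constraint \eqref{es}, and $\psi=\psi_D$ is given locally by \eqref{explicit}.

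First I would globalize this local data. Since $D$ is defined invariantly and its coefficients $S^{\mu}$ are uniquely determined, the pairing $D(f)=S^{\mu}\partial_{\mu}f=\langle df,\cdot\rangle$ forces the $S^{\mu}$ to assemble into a single distributional vector field $S$ (their densities with respect to the invariant volume $\omega^{n}/n!$ transform contravariantly). Lowering an index with $\omega$, I set $\alpha:=-i_{S}\omega$, a distributional $1$-form. A short index computation (contracting \eqref{es} with $\omega_{\rho\nu}\omega_{\sigma\mu}$ and using $\omega_{\tau\sigma}\omega^{\sigma\gamma}=\delta^{\gamma}_{\tau}$) shows that the constraint \eqref{es} is precisely the coordinate form of $d\alpha=0$, i.e.\ $L_{S}\omega=0$; and unwinding the definitions, with $X_{g}$ the hamiltonian field of $g$, gives $\psi(f,g)=\int_{X}f(i_{X_{g}}\alpha)\,\omega^{n}/n!$ in the distributional sense, matching the smooth formula \eqref{psal}. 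Thus skew-symmetric derivations correspond linearly and bijectively to closed $1$-currents $\alpha$.

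Next I would match the coboundaries. Using $\mathrm{ad}^{*}_{f}\phi=\omega^{\sigma\tau}f_{\sigma}\phi_{\tau}$ from the proof of Lemma~\ref{derivations}, the inner derivation $D(f)=\mathrm{ad}^{*}_{f}H$ attached to $H\in C^{\infty}_{c}(X)'$ has coefficients $S^{\mu}=\omega^{\mu\tau}\partial_{\tau}H$, so its associated current is the exact form $\pm dH$; conversely every exact current arises this way. Hence inner derivations correspond exactly to exact $1$-currents, and we obtain a linear isomorphism
\[
H^2(C^{\infty}_{c}(X),\R)\;\cong\;\{\text{closed distributional }1\text{-forms}\}\big/\{\text{exact distributional }1\text{-forms}\}\,.
\]
Finally I would invoke de Rham's theorem for currents: the complex of distributional forms is, like the smooth de Rham complex, a fine resolution of the constant sheaf $\underline{\R}$ (the Poincar\'e lemma holds distributionally), so its first cohomology is $H^1_{\mathrm{dR}}(X)$, with the inclusion of smooth forms a quasi-isomorphism. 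A smooth closed $\alpha$ then maps to its own class, and tracing it through the dictionary recovers exactly $\psi_\alpha$, so the composite isomorphism is the Roger map of Proposition~\ref{Rogersmap}.

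I expect the main obstacle to be the globalization step: checking that the locally defined distributions $S^{\mu}$ patch into a genuine coordinate-independent $1$-current, that \eqref{es} is globally equivalent to closedness, and that the sign-sensitive identification $\psi=\psi_\alpha$ holds invariantly; the remaining ingredient, the de Rham isomorphism for currents with arbitrary (not compactly supported) distributional coefficients, is standard but must be cited in the form that yields $H^1_{\mathrm{dR}}(X)$ rather than its compactly supported cousin.
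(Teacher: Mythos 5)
Your proposal is correct, and up to the construction of the global distribution $S$ it coincides with the paper's argument: it rests on the same chain of results (diagonality, Peetre's theorem, the reduction to first order, and the constraint \eqref{es}). Where you genuinely diverge is in the endgame. The paper keeps $S$ as a functional on $\Omega^1_c(X)$, reads \eqref{es} as $S\circ\delta=0$ for the canonical homology operator, and lands in $H^{\can}_{c,1}(X)^*$; it then proves injectivity by hand, using the open mapping theorem for LF-spaces to factor $S=H_0\circ\delta$ and a complemented-subspace argument to extend $H_0$ continuously, and proves surjectivity by exhibiting the Roger map as a left inverse via the Hodge star and the duality $H^{2n-1}_{\mathrm{dR},c}(X)^*\simeq H^1_{\mathrm{dR}}(X)$. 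You instead reinterpret $S$ as a closed degree-one current and invoke the de Rham theorem for currents in one stroke. Your two dictionary entries do check out: contracting \eqref{es} with the (constant, in Darboux coordinates) inverse of $\omega$ is exactly the statement $S\circ\delta=0$ transported by the Hodge star to $d\alpha=0$, and the computation identifying inner derivations with $\pm dH$ is the paper's identity $S=H\circ\delta$ in disguise. What your route buys is a complete bypass of the LF-space functional analysis — continuity of the primitive is built into the notion of current, and the sheaf-theoretic proof of the current-valued Poincar\'e lemma needs no open mapping theorem — at the cost of importing the de Rham theorem for currents as a black box (correctly cited in the non-compactly-supported form). One advantage of the paper's packaging that your reformulation loses is that the quotient $\Omega^1(X)/\delta\Omega^2(X)$ it isolates here is reused verbatim as the universal central extension in Section~\ref{s5}.
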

\begin{proof}
Let $\psi$ be a continuous Lie algebra 2-cocycle on the Poisson Lie algebra $C^{\infty}_{c}(X)$.
The restriction of $\psi$ to a 
Darboux coordinate patch $U_i$ is given by \eqref{explicit}
for distributions $S_{U_i}^{\mu}$ on $U_i$
satisfying \eqref{es}.
We define the $TX$-valued distribution $S_{U_i} \in \Omega^1_{c}(U_i)'$ by
$S_{U_i}(\alpha) = S_{U_i}^{\mu}(\alpha_{\mu})$, 
where $\al=\al_\mu dx^\mu$ on $U_i$.
Then $S_{U_i}$ agrees with $S_{U_j}$
on the overlap $U_i \cap U_j$ because there, both are determined uniquely by the expression
$S_{U_i}(gdf) = \psi(f,g) = S_{U_j}(gdf)$ for $f, g \in {C^{\infty}_{c}(U_i \cap U_j)}$.
The $TX$-valued distributions $S_{U_i}$ therefore splice together
to a $TX$-valued distribution $S \in \Omega^1_{c}(X)'$ 
such that\footnote{Note that this differs from the convention in 
\eqref{oneconvention} by a minus sign.} 
\be\label{pssi}
\psi(f,g) = S(gdf)\,.
\ee

The requirement $\omega^{\nu\tau}\partial_{\tau}S_{U_i}^{\mu} - \omega^{\mu\tau}\partial_{\tau}S_{U_i}^{\nu} = 0$
for the local distributions translates to  $S \circ \delta = 0$,
for $S\in \Om^1_c(X)'$, 
with $\delta \colon \Omega^2_{c}(X) \rightarrow \Omega^1_{c}(X)$ the canonical homology operator
of equation \eqref{canhomop}.
Indeed, using equation \eqref{PoissonCoh}, one calculates that,
in Darboux coordinates, we have
\[\delta F_{\sigma\tau} dx^{\sigma}\wedge dx^{\tau} = 
-\omega^{\nu\tau}\partial_{\tau}F_{\nu\mu} 
dx^{\mu}
+
\omega^{\mu\tau}\partial_{\tau}F_{\nu\mu} 
dx^{\nu}\,,
\]
so that for $F \in \Omega^{2}_{c}(U_i)$, we have
$S(\delta F) = \omega^{\nu\tau}\partial_{\tau}S^{\mu}_{U_i}(F_{\nu\mu}) - 
\omega^{\mu\tau}\partial_{\tau}S^{\nu}_{U_i}(F_{\nu\mu}) = 0$.
We conclude that continuous Lie algebra 2-cocycles $\psi$ on $C^{\infty}_{c}(X)$ 
correspond bijectively to 
continuous linear maps $S \colon \Omega^1_{c}(X)/\delta (\Omega^2_{c}(X)) \rightarrow \R$.
In particular, every continuous 2-cocycle
$\psi$ yields an element of 
$H^{\can}_{c,1}(X)^*$ 
by restricting $S$
to $\mathrm{Ker}(\delta)/\delta (\Omega^2_{c}(X)) = H^{\can}_{c,1}(X)$.

Note that, if $\psi=\dd H$ is the boundary of $H \in C^{\infty}_{c}(X)'$ in Lie algebra cohomology,
then, using  equation~\eqref{PoissonCoh}, we see that $\psi(f,g) = H(-\{f,g\})$ implies 
$S(gdf) = H(\delta (gdf))$.
It follows that $\psi$ is exact if and only if $S = H \circ \delta$ for some
$H \in C^{\infty}_{c}(X)'$. In particular, cohomologous cocycles 
induce the same element of 
$H^{\mathrm{can}}_{c,1}(X)^*$. 
We thus obtain a linear map 
\be\label{linm}
[\ps]\in H^2(C^{\infty}_{c}(X), \R) \mapsto S\in H^{\mathrm{can}}_{ c,1}(X)^*.
\ee
We prove that this map is injective. 
If $[\psi] \in H^2(C^{\infty}_{c}(X), \R)$ maps to zero, then
$S \colon \Omega^1_{c}(X) \rightarrow \R$ vanishes on $\mathrm{Ker}(\delta)$,
and defines a continuous linear functional 
$S \colon \Omega^{1}_{c}(X)/\mathrm{Ker}(\delta) \rightarrow \R$.
Since $\delta \colon \Omega^{1}_{c}(X) \rightarrow \mathrm{Im}(\delta) = C^{\infty}_{c,0}(X)$
is a continuous, surjective map of LF-spaces, it is open by \cite[Thm.\ 1]{DieudonneSchwartz1949}, so that
\[
\delta \colon \Omega^1_{c}(X)/\mathrm{Ker}(\delta) \rightarrow \mathrm{Im}(\delta) = C^{\infty}_{c,0}(X) 
\subseteq \Omega^0_{c}(X)
\]
is an isomorphism of topological vector spaces.
It follows that $S$ can
be written as $S = H_0 \circ \delta$
for a unique continuous functional $H_0 \colon \mathrm{Im}(\delta) \rightarrow \R$.
Since $\mathrm{Im}(\delta) \subseteq \Omega^0_{c}(X)$ is a complemented inclusion of locally convex 
spaces (one can choose a compactly supported function $f$ with $\int_{X_x} f \omega^n/n!$ on every
connected component $X_x$ of $X$), 
$H_0$ extends to a continuous functional $H \colon \Omega^0_{c}(X) \rightarrow \R$.
Since $S = H \circ \delta$, we have $[\psi] = 0$ by \eqref{pssi}.  

It remains to show that the map \eqref{linm}
is surjective. 
Recall that  $H^{\mathrm{can}}_{c,1}(X)^*$ is isomorphic to 
$H^{2n-1}_{\mathrm{dR}, c}(X)^*$ under the symplectic hodge star operator,
which in turn is isomorphic to $H^1_{\mathrm{dR}}(X)$ by Poincar\'e duality.
Proposition~\ref{Rogersmap} therefore yields a map 
$H^1_{\mathrm{dR}}(X) \rightarrow H^2(C^{\infty}_{c}(X), \R)$ in the other direction, and we show
that it is a left inverse to \eqref{linm}.
Given a class $[\alpha] \in H^1_{\mathrm{dR}}(X)$, the corresponding 
cocycle \[\psi_{\alpha}(f,g) := -\int_{X} g (i_{X_{f}}\alpha) \,\omega^n/n!\]
can also be written 
$\psi_{\alpha}(f,g) = \int_{X} \alpha \wedge *gdf$, 
as 
\begin{eqnarray*}
-g(i_{X_{f}}\alpha) \vol &=& -g\alpha \wedge i_{X_{f}}\vol\\ 
&=& \alpha \wedge gdf \wedge \omega^{n-1}/(n-1)! \\
&=& \alpha \wedge *gdf\,.
\end{eqnarray*}
Therefore, we have
$$S(\beta) = \int_{X} \alpha \wedge * \beta$$
for the  $TX$-valued distribution $S$ associated to $\psi_\al$.
Under the symplectic hodge star operator, 
the induced map $S \colon H^{\mathrm{can}}_{c,1}(X) \rightarrow \R$
corresponds with the map 
$H^{2n-1}_{\mathrm{dR},c}(X) \rightarrow \R$ given by
$[\beta] \mapsto \int_{X} \beta \wedge \alpha$.
Since this corresponds to $[\alpha] \in H^1_{\mathrm{dR}}(X)$
under Poincar\'e duality, the composition 
$H^1_{\mathrm{dR}}(X) \rightarrow 
H^2(C^{\infty}_{c}(X), \R) \rightarrow
H^1_{\mathrm{dR}}(X)$ 
is the identity, and surjectivity of \eqref{linm} follows.
We conclude that the map $[\al]\mapsto[\ps_\al]$ is an isomorphism.
\end{proof}

Using Lemma~\ref{supportcompact}, we now show that also the map~\eqref{rogernc}
is an isomorphism.

\begin{Theorem}\label{noncptpoissonh2}
For any symplectic manifold $X$, the map $[\al]\mapsto[\ps_\al]$ described in Prop.~\ref{Rogersmap}, with 
\be
\psi_{\alpha}(f,g) = \int_{X} f (i_{X_{g}}\alpha) \,\omega^n/n!,
\ee
is an isomorphism $H^1_{\mathrm{dR}, c}(X) \simeq H^2(C^{\infty}(X), \R)$.
\end{Theorem}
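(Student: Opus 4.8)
The plan is to re-run the proof of Theorem~\ref{HoofdZaak} with $C^\infty(X)$ in place of $C^\infty_c(X)$; the only structural change is that every distribution that appears now carries compact support, so that the compactly supported canonical homology $H^{\can}_{c,1}(X)$ gets replaced by the ordinary canonical homology $H^{\can}_1(X)$. First I would record that, by Proposition~\ref{derivcoc} together with Lemmas~\ref{Peetre}, \ref{derivations} and \ref{derivationsII}, every continuous $2$-cocycle on $C^\infty(X)$ has the form $\psi(f,g)=S(g\,\dd f)$ for a unique $TX$-valued distribution $S$, the derivation condition being equivalent to $S\circ\delta=0$ (cf.\ \eqref{pssi} and the computation with \eqref{PoissonCoh} in Theorem~\ref{HoofdZaak}). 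The first assertion of Lemma~\ref{supportcompact} says precisely that such a cocycle lives on $C^\infty(X)$, rather than merely on $C^\infty_c(X)$, if and only if $S$ has compact support. Hence continuous $2$-cocycles on $C^\infty(X)$ correspond bijectively to compactly supported $S\in\Omega^1(X)'$ with $S\circ\delta=0$, i.e.\ to compactly supported continuous functionals on $\Omega^1(X)/\delta\Omega^2(X)$.

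Since a compactly supported $S$ pairs with \emph{all} $1$-forms, not only the compactly supported ones, and since every coboundary $\psi=\dd H$ corresponds to $S=H\circ\delta$, which vanishes on $\Ker(\delta\colon\Omega^1(X)\to\Omega^0(X))$, restriction of $S$ to the canonical cycles yields a well-defined linear map
\[
H^2(C^\infty(X),\R)\longrightarrow H^{\can}_{1}(X)^*,\qquad [\psi]\mapsto S\big|_{H^{\can}_1(X)}.
\]
I claim this is an isomorphism, by the two steps of Theorem~\ref{HoofdZaak}. For injectivity, suppose $S|_{\Ker\delta}=0$. Then $S$ factors as $S=H_0\circ\delta$ through the surjective open map $\delta\colon\Omega^1(X)\to C^\infty(X)$ (Lemma~\ref{imde} and \cite[Thm.~1]{DieudonneSchwartz1949}), the factoring being well defined exactly because $S$ kills all canonical cycles. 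The decisive point is that $H_0$ is again compactly supported: if $u\in C^\infty(X)$ is supported in a noncompact open set disjoint from $\supp(S)$, then Lemma~\ref{imde} applied to that set lets us write $u=\delta\beta$ with $\beta$ supported there too, so $H_0(u)=S(\beta)=0$; after enlarging the compact set to absorb any relatively compact components of the complement of $\supp(S)$, this forces $\supp(H_0)$ compact, whence $H_0\in C^\infty(X)'$ and $[\psi]=0$.

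For surjectivity I would exhibit the Roger map of Proposition~\ref{hamcocycles}/\ref{Rogersmap} as a one-sided inverse. By Proposition~\ref{Prop:candRiso} the symplectic Hodge star identifies $H^{\can}_1(X)^*$ with $H^{2n-1}_{\dR}(X)^*$, which Poincar\'e duality identifies with $H^1_{\dR,c}(X)$. For a closed, compactly supported $\alpha$ the associated distribution is $S_\alpha(\beta)=\int_X\alpha\wedge *\beta$, so its restriction to $H^{\can}_1(X)$ corresponds under $*$ to the functional $[\gamma]\mapsto\int_X\gamma\wedge\alpha$ on $H^{2n-1}_{\dR}(X)$, i.e.\ to $[\alpha]$ under Poincar\'e duality. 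Thus the composite $H^1_{\dR,c}(X)\xrightarrow{[\alpha]\mapsto[\psi_\alpha]}H^2(C^\infty(X),\R)\to H^1_{\dR,c}(X)$ is the identity, and together with the injectivity above this shows both maps are isomorphisms and that $[\alpha]\mapsto[\psi_\alpha]$ is the asserted inverse.

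The main obstacle is the compact-support control on $H_0$ in the injectivity step, which is the substance of Lemma~\ref{supportcompact}. In Theorem~\ref{HoofdZaak} one only needed a \emph{continuous} functional and could extend it using the complemented inclusion $\mathrm{Im}(\delta)\subseteq\Omega^0_c(X)$, whereas here one must genuinely bound the support of $H_0$ so that it defines a coboundary on the full Poisson algebra $C^\infty(X)$. It is exactly this point that forces the \emph{non}-compactly supported canonical homology $H^{\can}_1(X)$—paired against a compactly supported $S$—to appear in place of $H^{\can}_{c,1}(X)$, and hence produces $H^1_{\dR,c}(X)$ rather than $H^1_{\dR}(X)$ in the final answer.
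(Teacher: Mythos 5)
Your proposal follows the paper's proof of Theorem~\ref{noncptpoissonh2} essentially step for step: restrict the cocycle to $C^{\infty}_{c}(X)$, use Lemma~\ref{supportcompact} to conclude that the distribution $S$ is compactly supported and hence pairs with all of $\Omega^1(X)$, pass to $H^{\can}_{1}(X)^*$, and produce the inverse via the Roger map together with Poincar\'e duality between $H^{2n-1}_{\dR}(X)$ and $H^{1}_{\dR,c}(X)$. The only place you diverge is the step you yourself single out as ``the decisive point,'' and there your argument is both unnecessary and, as written, broken. It is unnecessary because once $S$ is known to be a compactly supported distribution, it is a continuous functional on the \emph{Fr\'echet} space $\Omega^{1}(X)$ vanishing on $\Ker(\delta)$; since $\delta \colon \Omega^{1}(X) \rightarrow C^{\infty}(X)$ is a continuous surjection of Fr\'echet spaces (Lemma~\ref{imde}, $X$ noncompact), the open mapping theorem makes $\Omega^{1}(X)/\Ker(\delta) \rightarrow C^{\infty}(X)$ a topological isomorphism, so $H_0$ is automatically a continuous functional on $C^{\infty}(X)$ --- which is precisely the statement that it is a compactly supported distribution, and precisely what is needed for $\dd H_0$ to be a continuous coboundary on $C^{\infty}(X)$. (This is also why the extension step over the complemented inclusion $\mathrm{Im}(\delta) \subseteq \Omega^{0}_{c}(X)$ from Theorem~\ref{HoofdZaak} disappears here: $\delta$ is now surjective.) It is broken because Lemma~\ref{imde} applied to the open set $V$ produces a $1$-form $\beta \in \Omega^{1}(V)$ that is merely defined on $V$, with no control near $\partial V$; it need not extend by zero to an element of $\Omega^{1}(X)$, so the identity $H_0(u) = S(\beta)$ is unjustified. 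Replace that detour by the open-mapping-theorem observation and your proof is correct and coincides with the paper's.
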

\begin{proof}
This is a straightforward adaptation of the proof of Theorem~\ref{HoofdZaak}.
Let $\psi$ be a continuous 2-cocycle on $C^{\infty}(X)$.
Since its restriction to $C^{\infty}_{c}(X)$ extends continuously to $C^{\infty}(X)$, 
the corresponding
$TX$-valued distribution $S$ is of compact support, $S \in \Omega^1(X)'$.
It vanishes on $\delta \Omega^2(X)$, so $S$ defines a continuous 
linear map $S \colon \Omega^1(X)/\delta \Omega^2(X) \rightarrow \R$,
hence an element of $H^{\mathrm{can}}_1(X)^*$.
If $\psi = H \circ \delta$, then $H$ is compactly supported
by Lemma~\ref{supportcompact}, so the element 
of  $H_1^{\mathrm{can}}(X)^*$ depends only on the class of $[\psi]$.
The map $H^2(C^{\infty}(X), \R) \rightarrow H_1^{\mathrm{can}}(X)^*$
is bijective by an argument similar to the one in Theorem~\ref{HoofdZaak},
with the map \eqref{rogernc} of Prop.~\ref{Rogersmap} taking the role of left inverse,
$\Omega^0(X)$ takes the role of $\Omega^0_{c}(X)$, and we use 
$H^{2n-1}_{\mathrm{dR}}(X)$
instead of $H^{2n-1}_{\mathrm{dR}, c}(X)$.
\end{proof}


Combining Proposition~\ref{kaf} with Theorem~\ref{noncptpoissonh2}, we obtain the following description 
of the second Lie algebra cohomology of $\ham(X)$.

\begin{Theorem}\label{H2Ham}
Let $X$ be a connected symplectic manifold.
Then the map from Proposition~\ref{hamcocycles},
assigning to the closed 1-form $\alpha \in \Omega^1_{c}(X)$ the cocycle 
\be
\psi_{\alpha}(X_f, X_g) = \int_{X}f\alpha(X_g)\omega^n/n!,
\ee
is an isomorphism 
\[Z^1_{ c}(X)/ d\Omega^0_{c,0} (X)\rightarrow H^2(\ham(X), \R)\,.\]
In particular, 
$H^2(\ham(X),\R)$ is isomorphic to $H^1_{\mathrm{dR}}(X)$ if $X$ is compact
and to $H^1_{\mathrm{dR}, c}(X) \oplus \R [\psi_{KS}]$ if $X$ is noncompact.
\end{Theorem}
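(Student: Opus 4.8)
The plan is to deduce the theorem from the three results already in place: the computation of $H^2(C^\infty(X),\R)$ in Theorem~\ref{noncptpoissonh2}, the comparison between $H^2(\ham(X),\R)$ and $H^2(C^\infty(X),\R)$ in Proposition~\ref{kaf}, and the well-definedness of $[\alpha]\mapsto[\psi_\alpha]$ from Proposition~\ref{hamcocycles}. The strategy is to exhibit both the source and the target as extensions of, respectively, $H^1_{\mathrm{dR},c}(X)$ and $H^2(C^\infty(X),\R)$, assemble everything into a ladder of short exact sequences, and apply the five lemma. Concretely, I would write $\Phi\colon Z^1_c(X)/d\Omega^0_{c,0}(X)\to H^2(\ham(X),\R)$ for the map of Proposition~\ref{hamcocycles} and consider the commutative diagram
\[
\begin{array}{ccccc}
d\Omega^0_c(X)/d\Omega^0_{c,0}(X) & \hookrightarrow & Z^1_c(X)/d\Omega^0_{c,0}(X) & \twoheadrightarrow & H^1_{\mathrm{dR},c}(X)\\
\downarrow & & \downarrow\Phi & & \downarrow\\
\R[\psi_{\mathrm{KS}}] & \hookrightarrow & H^2(\ham(X),\R) & \twoheadrightarrow & H^2(C^\infty(X),\R),
\end{array}
\]
whose bottom row is the short exact sequence of Proposition~\ref{kaf} and whose right vertical map is the isomorphism of Theorem~\ref{noncptpoissonh2}.

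The first step is to make the top row precise. The right map is the quotient by the larger subspace $d\Omega^0_c(X)\supseteq d\Omega^0_{c,0}(X)$, which is surjective with kernel $d\Omega^0_c(X)/d\Omega^0_{c,0}(X)$, so the top row is exact. I would then identify this kernel. If $X$ is noncompact and connected, $d$ is injective on $\Omega^0_c(X)$ (there is no nonzero compactly supported locally constant function), and $\langle\,\cdot\,\rangle$ of \eqref{gemiddelde} gives $\Omega^0_c(X)/\Omega^0_{c,0}(X)\cong\R$; hence the kernel is one-dimensional, spanned by $[dh]$ for any $h$ with $\langle h\rangle=1$. If $X$ is compact and connected, then every $h\in\Omega^0(X)$ differs from an element of $\Omega^0_{c,0}(X)$ by a constant, which $d$ annihilates, so $d\Omega^0(X)=d\Omega^0_{c,0}(X)$ and the kernel vanishes; there the top map is already an isomorphism.

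The second step is commutativity. For the right square, note that the cocycle $\Phi([\alpha])=[\psi_\alpha]$ on $\ham(X)$ pulls back along $f\mapsto X_f$ to $\psi_\alpha(X_f,X_g)=\int_X f\,(i_{X_g}\alpha)\,\omega^n/n!$, which is exactly the $C^\infty(X)$-cocycle of Theorem~\ref{noncptpoissonh2}; so both ways around send $[\alpha]$ to $[\psi_\alpha]\in H^2(C^\infty(X),\R)$. For the left square, Proposition~\ref{hamcocycles} gives $[\psi_{dh}]=\langle h\rangle[\psi_{\mathrm{KS}}]$ in $H^2(\ham(X),\R)$, so $\Phi$ carries $[dh]$ to $\langle h\rangle[\psi_{\mathrm{KS}}]$; since $[\psi_{\mathrm{KS}}]\neq0$ for noncompact $X$ by Corollary~\ref{corr:KSextension}, the left vertical map is an isomorphism $\R\xrightarrow{\sim}\R[\psi_{\mathrm{KS}}]$ (and it is trivially an isomorphism $0\to0$ in the compact case). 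With both outer verticals isomorphisms, the five lemma forces $\Phi$ to be an isomorphism in all cases. The stated identifications then follow: for compact $X$ the source is $Z^1(X)/d\Omega^0(X)=H^1_{\mathrm{dR}}(X)$, and for noncompact $X$ the top row together with $[\psi_{\mathrm{KS}}]$ realizes $H^2(\ham(X),\R)\cong H^1_{\mathrm{dR},c}(X)\oplus\R[\psi_{\mathrm{KS}}]$.

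I expect the genuine content to be entirely contained in the three cited results; the remaining work is bookkeeping. The one place demanding care—and hence the main obstacle—is verifying that the ladder actually commutes and that the distinguished line $d\Omega^0_c(X)/d\Omega^0_{c,0}(X)$ maps isomorphically onto $\R[\psi_{\mathrm{KS}}]$, i.e.\ that the Kostant--Souriau class accounts for exactly the discrepancy between quotienting by $d\Omega^0_{c,0}(X)$ versus $d\Omega^0_c(X)$. This hinges on the identity $[\psi_{dh}]=\langle h\rangle[\psi_{\mathrm{KS}}]$ and on the nonvanishing of $[\psi_{\mathrm{KS}}]$ in the noncompact case, both of which are available, so no further estimates or constructions should be needed.
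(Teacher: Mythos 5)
Your proposal is correct and follows essentially the same route as the paper, whose proof of this theorem is simply the combination of Proposition~\ref{kaf} with Theorem~\ref{noncptpoissonh2} (together with the identity $[\psi_{dh}]=\langle h\rangle[\psi_{KS}]$ from Proposition~\ref{hamcocycles}); your five-lemma ladder is just a careful write-up of that combination. The key points you isolate — that $d\Omega^0_c(X)/d\Omega^0_{c,0}(X)$ is one-dimensional for noncompact connected $X$ and maps isomorphically onto $\R[\psi_{KS}]$, and that it vanishes in the compact case — are exactly the bookkeeping the paper leaves implicit.
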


Note that for noncompact $X$ the isomorphism between   
$Z^1_{c}(X)/ d\Omega^0_{c,0}(X) $ and $H^1_{\mathrm{dR}, c}(X) \oplus \R [\psi_{KS}]$
is not canonical.

\section{Universal central extension of $\ham(X)$}\label{s5}

Our results on the second Lie algebra cohomology of $\ham(X)$ yield an explicit classification
of the continuous central extensions of $\ham(X)$ by $\R$.
In this section, we will use this to prove that,
for a connected symplectic manifold $X$ with $H^{2n-1}_{\mathrm{dR}}(X)$
finitely generated, the central extension described in the following 
proposition is the universal central extension of $\ham(X)$.
We will then see that the same Lie algebra also yields a universal central
extension of $C^{\infty}(X)$ if $X$ is  noncompact.

\begin{Proposition}\label{beschrijving}
Let $X$ be a symplectic manifold. Then 
$\Omega^1(X)/\delta\Omega^2(X)$
is a Fr\'echet Lie algebra 
with Lie bracket
\be\label{haakje}
[[\alpha],[\beta]] :=  [\delta\alpha \cdot d\delta\beta]\,. 
\ee
If we define 
$\pi \colon \Omega^1(X)/\delta\Omega^2(X) \rightarrow 
\ham(X)$ by 
$\pi([\alpha]) := X_{\delta\alpha}$,
then 
\be\label{runiverse}
\mathfrak{z}
\stackrel{\iota}{\longrightarrow} 
\Omega^1(X)/\delta\Omega^2(X) \stackrel{\pi}{\longrightarrow} 
\ham(X) 
\ee
is a continuous central extension 
of $\ham(X)$ by 
the center
of $\Omega^{1}(X)/\delta\Omega^2(X)$, which is given by
\be\label{centreofext}
\mathfrak{z} = \mathrm{Ker}(d\circ\delta)/\delta\Omega^2(X)\,.
\ee
\end{Proposition}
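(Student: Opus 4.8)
The plan is to check the algebraic identities first and to treat the topology as a separate, essentially standard, point; throughout I abbreviate $f := \delta\alpha$, $g := \delta\beta$, $h := \delta\gamma$, and use repeatedly that $\delta^2 = 0$, which is immediate from $*^2 = \mathrm{Id}$ and $d^2 = 0$ in Theorem~\ref{Brylinskisthm}. Well-definedness of the bracket \eqref{haakje} is then automatic: replacing $\alpha$ by $\alpha + \delta\gamma'$ leaves $\delta\alpha$ unchanged, and replacing $\beta$ by $\beta + \delta\gamma''$ leaves $d\delta\beta$ unchanged, so the one-form $\delta\alpha\cdot d\delta\beta$ itself depends only on $[\alpha]$ and $[\beta]$. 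Antisymmetry comes from the Leibniz rule: $\delta\alpha\cdot d\delta\beta + \delta\beta\cdot d\delta\alpha = d(\delta\alpha\cdot\delta\beta)$ lies in $d\Omega^0(X)\subseteq\delta\Omega^2(X)$ by Lemma~\ref{lem1}, whence $[\delta\alpha\cdot d\delta\beta] = -[\delta\beta\cdot d\delta\alpha]$.

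The Jacobi identity is the heart of the argument. Since $\delta(f\,dg) = \{f,g\}$ by \eqref{rkform1}, the representative $f\,dg$ of $[[\alpha],[\beta]]$ satisfies $\delta(f\,dg) = \{f,g\}$, so $[[[\alpha],[\beta]],[\gamma]] = [\{f,g\}\,dh]$, and the Jacobiator is the class of
\[
C := \{f,g\}\,dh + \{g,h\}\,df + \{h,f\}\,dg .
\]
The plan is to show $C\in\delta\Omega^2(X)$ by producing an explicit primitive. Applying Brylinski's formula \eqref{rkform2} to $f\,dg\wedge dh$ and its two cyclic permutations and summing, the algebraic terms add up to $2C$ while the derivative terms recombine through $dJ$, where $J := f\{g,h\} + g\{h,f\} + h\{f,g\}$; setting $\eta := f\,dg\wedge dh + g\,dh\wedge df + h\,df\wedge dg \in \Omega^2(X)$ this yields the identity $3C = \delta\eta + dJ$. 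As $\delta\eta\in\delta\Omega^2(X)$ and $dJ\in d\Omega^0(X)\subseteq\delta\Omega^2(X)$ by Lemma~\ref{lem1}, we get $3C\in\delta\Omega^2(X)$, hence $C\in\delta\Omega^2(X)$ since this is a real subspace, so the Jacobiator vanishes. I expect the bookkeeping behind the identity $3C = \delta\eta + dJ$ — choosing $\eta$ and tracking the Poisson-bracket terms — to be the one genuinely delicate step; everything else is formal.

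Next I analyse the projection $\pi([\alpha]) = X_{\delta\alpha}$. It is well defined and continuous because $\delta\alpha$ is unchanged under $\delta^2 = 0$, and it is a Lie algebra homomorphism since $\pi([[\alpha],[\beta]]) = X_{\delta(f\,dg)} = X_{\{f,g\}} = [X_f,X_g]$, the last equality being $[X_f,X_g] = X_{\{f,g\}}$, which follows from $df = -i_{X_f}\omega$ and $-i_{[v,w]}\omega = d\,\omega(v,w)$. Surjectivity onto $\ham(X)$ follows from Lemma~\ref{imde}, applied on each connected component: every Hamiltonian field is $X_f$, and $f$ may be altered by a locally constant function (which does not change $X_f$) so as to lie in $\mathrm{Im}(\delta)$, giving $X_f = \pi([\alpha])$. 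The kernel is
\[
\{[\alpha] : X_{\delta\alpha} = 0\} = \{[\alpha] : d\delta\alpha = 0\} = \mathrm{Ker}(d\circ\delta)/\delta\Omega^2(X) = \mathfrak{z},
\]
where the quotient is legitimate because $\delta\Omega^2(X)\subseteq\mathrm{Ker}(d\circ\delta)$ by $\delta^2 = 0$.

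Finally I identify $\mathfrak{z}$ with the centre. The inclusion $\mathfrak{z}\subseteq Z$ is direct: if $d\delta\alpha = 0$ then $c := \delta\alpha$ is locally constant, so $[[\alpha],[\beta]] = [c\,d\delta\beta] = [d(c\,\delta\beta)] = 0$ by Lemma~\ref{lem1}, and $[[\beta],[\alpha]] = [\delta\beta\cdot d\delta\alpha] = 0$. For the reverse inclusion I would show $\ham(X)$ is centreless: if $X_f$ commutes with every $X_g$, then $\{f,g\} = X_f(g)$ is locally constant for all $g$, and comparing the values on $g$ and $g^2$ forces $X_f(g) = 0$ for all $g$, i.e. $X_f = 0$; since $\pi$ is a surjection with central kernel $\mathfrak{z}$, this gives $Z = \mathfrak{z}$. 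The only remaining point is topological: the quotient is Fréchet provided $\delta\Omega^2(X)$ is closed, which holds because the Hodge star (Proposition~\ref{Prop:candRiso}) identifies it with the space of exact $(2n-1)$-forms, whose closedness is a standard property of the smooth de Rham complex. Continuity of the bracket is then clear, as $d$, $\delta$ and multiplication are continuous.
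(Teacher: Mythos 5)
Your proof is correct and follows essentially the same route as the paper: antisymmetry via $d\Omega^0(X)\subseteq\delta\Omega^2(X)$, the Jacobi identity via Brylinski's formula \eqref{rkform2}, surjectivity of $\pi$ via Lemma~\ref{imde}, and centrelessness of $\ham(X)$ to pin down $\mathfrak z$. The only cosmetic differences are that the paper gets the cyclic sum $C$ from a \emph{single} application of \eqref{rkform2} to $\delta\alpha_1\, d\delta\alpha_2\wedge d\delta\alpha_3$ (your symmetrized identity $3C=\delta\eta+dJ$ is a valid but threefold-redundant version of this), and that it proves closedness of $\delta\Omega^2(X)$ by the explicit pairing $\alpha\mapsto\int_X(*\alpha)\wedge\beta$ with closed $\beta\in\Omega^1_c(X)$ rather than by invoking closedness of exact forms directly.
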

\begin{Remark}
If $X$ is compact, then $\mathrm{Ker}(d\circ \delta) = \mathrm{Ker}(\delta)$, since $\delta (\Omega^1(X)) = C^{\infty}_{c,0}(X)$ by 
Lemma \ref{imde}.
It follows that $\mathfrak{z}$ equals $H^{\mathrm{can}}_{1}(X)$,
which is isomorphic to $H^{2n-1}_{\mathrm{dR}}(X)$
by Prop.~\ref{Prop:candRiso}.
If $X$ is noncompact, then $\mathfrak{z}$ is slightly bigger than $H^{2n-1}_{dR}(X)$. In fact, we have an exact sequence
\[
H^{2n-1}_{\mathrm{dR}}(X) \stackrel{*}{
\longrightarrow } 
\mathfrak{z} \stackrel{\delta}{
\longrightarrow } 
\R\,,
\]
where the extra dimension corresponds to the Kostant-Souriau extension.
\end{Remark}

\begin{Remark}\label{RkOmega2n}
We obtain a different description of the Lie algebra $\Omega^1(X)/\delta \Omega^2(X)$ 
if we identify it with $\Omega^{2n-1}(X)/d\Omega^{2n-2}(X)$
using the symplectic hodge star operator.
The Lie bracket on $\Omega^{2n-1}(X)/d\Omega^{2n-2}(X)$ takes the form
\[
[[\ga_1],[\ga_2]]=[f_1 df_2\wedge\om^{n-1}/(n-1)!],
\]
where the function $f \in C^\oo(X)$ is uniquely defined by the form
$\ga\in\Om^{2n-1}(X)$ via $f\vol=d\ga$.
The Lie algebra homomorphism $\Omega^{2n-1}(X)/d\Omega^{2n-2}(X)\rightarrow \ham(X)$ is given by 
$[\gamma] \mapsto X_{f}$, yielding the central extension 
\be\label{uce2}
\mathrm{Ker}(\delta \circ d)/d\Omega^{2n-2}(X)
\to\Om^{2n-1}(X)/d\Om^{2n-2}(X)\to\ham(X)
\ee
If $X$ is compact, then $\mathrm{Ker}(\delta \circ d)/d\Omega^{2n-2}(X) = H^{2n-1}_{\dR}(X)$.
\end{Remark}

\begin{proof}
To show that $\Omega^1(X)/\delta\Omega^2(X)$ is Fr\'echet, it suffices to exhibit  
$\delta\Omega^{2}(X)$ as a closed subspace of the Fr\'echet space $\Omega^1(X)$
(cf.~\cite[Thm~1.41]{Rudin1991}).
Since $\delta = (-1)^{k+1}*d\,*$ and $*^2 = \mathrm{Id}$, 
the equality $\alpha = \delta F$ for $F\in\Om^2(X)$ is equivalent to
$*\alpha = d *F$, so that 
$\alpha \in \Omega^{1}(X)$ is $\delta$-exact
if and only if $\int_{X} (*\alpha) \wedge \beta = 0$ for all closed $\beta \in \Omega^1_{c}(X)$.
Since this is a closed condition, 
$\delta\Omega^{2}(X)$ is closed in $\Omega^1(X)$.

The bracket \eqref{haakje} 
is continuous because it is given in terms of differential operators.
Moreover, $\pi$ preserves the brackets because
by 
\eqref{rkform1}, we have 
\be\label{projectiescherm}
\delta [[\alpha],[\beta]] = \delta(\delta\alpha \cdot d\delta\beta) = \{\delta \alpha, \delta\beta\}\,,
\ee
so that $X_{\delta [[\alpha],[\beta]]} = [X_{\delta\alpha},X_{\delta\beta}]$.

To check that \eqref{haakje} defines a Lie bracket, we use the following identity
proven in Lemma \ref{lem1}:
\be\label{dindelta}
d\Omega^0(X) \subseteq \delta \Omega^{2n-2}(X)\,.
\ee
Antisymmetry follows from 
\[[[\alpha],[\beta]] + [[\beta],[\alpha]] = [d(\delta\alpha \cdot \delta\beta)] = 0\,.\]
For the Jacobi identity note that, by \eqref{projectiescherm}, the jacobiator equals to
\[
\sum_{\mathrm{cycl}}[[[\alpha_1],[\alpha_2]],[\alpha_3]] = 
\Big[\sum_{\mathrm{cycl}} \{\delta\alpha_1,\delta\alpha_2\}\cdot d\delta\alpha_3\Big]\,.
\]
This is zero by \eqref{dindelta}, because by 
\eqref{rkform2} we have
\begin{eqnarray*}
\delta (\delta\alpha_1 \cdot d\delta\alpha_2 \wedge d\delta \alpha_3) &=&
\{\delta\alpha_1, \delta\alpha_2\} d\delta \alpha_3
-\{\delta\alpha_1, \delta\alpha_3\} d\delta \alpha_2
-\delta\alpha_1 d\{\delta\alpha_2, \delta\alpha_3\}\\
&=&
\sum_{\mathrm{cycl}}\{\delta\alpha_1, \delta\alpha_2\} d\delta \alpha_3
-d(\de\al_1\{\de\al_2,\de\al_3\})\,.
\end{eqnarray*}

The Lie algebra homomorphism $\pi$ is surjective by Lemma \ref{imde}.
The kernel of $\pi$ consists of those 
$[\alpha]$ in $\Omega^1(X)/\delta\Omega^2(X)$ for which $\delta\alpha$ is locally constant, 
hence
$\mathrm{Ker}(\pi) = \mathrm{Ker}(d\circ\delta)/\delta\Omega^2(X)$.

The Lie algebra $\ham(X)$ has trivial center, because 
$[X_{f},X_{g}] = 0$ for all $X_g\in \ham(X)$ implies that
$i_{X_{f}}dg $ is constant for all $g\in C^{\infty}(X)$, hence $X_{f} = 0$.
Since  $\pi$ maps
the center $\mathfrak{z}$ of $\Omega^1(X)/\delta\Omega^2(X)$ 
to the (trivial) center of $\ham(X)$, we have $\mathfrak{z} \subseteq \mathrm{Ker}(\pi)$.
Since $d\circ\delta\alpha = 0$ implies that $[\alpha] \in \Omega^{1}(X)/\delta\Omega^2(X)$
is central, the converse inclusion follows. 
\end{proof}

 \begin{Proposition}\label{extisperfect}
 Let $X$ be a symplectic manifold. Then
 $\Omega^1(X)/\delta\Omega^2(X)$, equipped with the Lie bracket \eqref{haakje}, is a 
 perfect Lie algebra.
 \end{Proposition}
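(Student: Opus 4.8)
The plan is to prove perfectness directly, by showing that every class $[\eta]\in\Omega^1(X)/\delta\Omega^2(X)$ is a \emph{finite} sum of brackets. By \eqref{haakje}, each bracket has the shape $[[\alpha],[\beta]]=[\delta\alpha\cdot d\delta\beta]=[f\,dg]$, where the functions $f=\delta\alpha$ and $g=\delta\beta$ range exactly over $\mathrm{Im}(\delta\colon\Omega^1(X)\to C^{\infty}(X))$. By Lemma~\ref{imde} (applied on each connected component), this image is all of $C^{\infty}(X)$ on the noncompact components and the space of functions with $\int_{X_x}f\vol=0$ on the compact components $X_x$. So the task reduces to this: given $\eta\in\Omega^1(X)$, exhibit finitely many $f_i,g_i\in\mathrm{Im}(\delta)$ with $\eta\equiv\sum_i f_i\,dg_i\pmod{\delta\Omega^2(X)}$.

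First I would produce such a representation --- momentarily disregarding the membership $f_i,g_i\in\mathrm{Im}(\delta)$ --- by the paving argument already used for Proposition~\ref{Prop:noncpspoisson}. Cover $X$ by Darboux charts and, using the Brouwer--Lebesgue paving principle \cite[Thm.~V1]{Hurewicz41}, refine to a locally finite cover $\{U_{k,r}\}$ with $k\in\N$, $r\in\{1,\dots,2n+1\}$, and $U_{k,r}\cap U_{k',r}=\emptyset$ for $k\neq k'$. Decompose $\eta=\sum_{k,r}\eta_{k,r}$ with $\eta_{k,r}$ supported in $U_{k,r}$. Exactly as in Proposition~\ref{ALDM}, each local piece is a finite sum $\eta_{k,r}=\sum_{\mu=1}^{2n}\phi^{\mu}_{k,r}\,d\xi^{\mu}_{k,r}$, where $\phi^{\mu}_{k,r}$ is the $\mu$-th coordinate component of $\eta_{k,r}$ and $\xi^{\mu}_{k,r}$ is a compactly supported function equal to the Darboux coordinate $x^{\mu}$ near $\mathrm{supp}(\phi^{\mu}_{k,r})$. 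Setting $\Phi^{\mu}_{r}:=\sum_{k}\phi^{\mu}_{k,r}$ and $\Xi^{\mu}_{r}:=\sum_{k}\xi^{\mu}_{k,r}$ (smooth, by local finiteness) and using $\phi^{\mu}_{k,r}\,d\xi^{\mu}_{k',r}=0$ for $k\neq k'$, I obtain the \emph{finite} identity $\eta=\sum_{r=1}^{2n+1}\sum_{\mu=1}^{2n}\Phi^{\mu}_{r}\,d\Xi^{\mu}_{r}$, a sum of at most $2n(2n+1)$ terms of the form $f\,dg$.

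It then remains to correct the $\Phi^{\mu}_{r}$ and $\Xi^{\mu}_{r}$ so that they land in $\mathrm{Im}(\delta)$, and this is precisely where Lemma~\ref{lem1} (asserting $d\Omega^0(X)\subseteq\delta\Omega^2(X)$) enters. If $c$ is locally constant then $c\,d\Xi^{\mu}_{r}=d(c\,\Xi^{\mu}_{r})\in\delta\Omega^2(X)$, so replacing $\Phi^{\mu}_{r}$ by $\Phi^{\mu}_{r}-c$ alters $[\Phi^{\mu}_{r}\,d\Xi^{\mu}_{r}]$ only by a trivial class; taking $c$ to be the average of $\Phi^{\mu}_{r}$ on each compact component (and $0$ on the noncompact ones) moves the shifted function into $\mathrm{Im}(\delta)$. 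Similarly, replacing $\Xi^{\mu}_{r}$ by $\Xi^{\mu}_{r}-c'$ for locally constant $c'$ leaves $\Phi^{\mu}_{r}\,d\Xi^{\mu}_{r}$ literally unchanged (since $dc'=0$) while letting me impose $\int_{X_x}\Xi^{\mu}_{r}\vol=0$ on the compact components. After these adjustments $[\eta]=\sum_{r,\mu}[[\alpha^{\mu}_{r}],[\beta^{\mu}_{r}]]$, with $\delta\alpha^{\mu}_{r}$ and $\delta\beta^{\mu}_{r}$ the shifted functions, so $[\eta]$ is a finite sum of brackets.

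The one genuinely essential point is the tension between \emph{finiteness} and the image constraint: the local decomposition is trivial, but perfectness demands honestly finite sums of brackets, so the paving principle --- not a mere partition of unity --- is what makes the argument work, just as in Proposition~\ref{Prop:noncpspoisson}. When every component of $X$ is noncompact the constraint is vacuous, since $\delta$ is then surjective, and only the paving step survives; the bookkeeping of the constant shifts on mixed compact/noncompact manifolds is the most delicate part, but it is dispatched uniformly by Lemma~\ref{lem1}. One could instead first invoke that $\ham(X)$ is perfect (Corollary~\ref{hamp}) to reduce to showing that the centre $\mathfrak{z}=\mathrm{Ker}(d\circ\delta)/\delta\Omega^2(X)$ lies in the commutator subalgebra, but the direct argument above already absorbs the centre and sidesteps that reduction.
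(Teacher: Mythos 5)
Your proposal is correct and follows essentially the same route as the paper's own proof: the Brouwer--Lebesgue paving principle to get a finite decomposition $\eta=\sum f_i\,dg_i$, Lemma~\ref{imde} to identify $\mathrm{Im}(\delta)$, and Lemma~\ref{lem1} to absorb the locally constant corrections on compact components. The only (immaterial) difference is the order of steps --- the paper first reduces $[fdg]$ to a commutator and then paves, while you pave first and then adjust the constants.
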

 \begin{proof}
 First, suppose that $X$ is connected, and $[\alpha] \in \Omega^1(X)/\delta\Omega^2(X)$ is of the form
 $[\alpha] = [fdg]$, with $f, g \in C^{\infty}(X)$.
If $X$ is noncompact, then $\delta:\Om^1(X)\to C^\oo(X)$ 
is surjective by Lemma \ref{imde} and we can
 choose $\phi, \gamma \in \Omega^{1}(X)$ with $\delta\phi = f$ and $\delta \gamma = g$, so that $[\alpha] = [\delta\phi \cdot d\delta \gamma] = [[\phi],[\gamma]]$ is a commutator.
If, on the other hand, $X$ is compact, then $\mathrm{Im}(\delta)$ is equal to
$C^\oo_{c,0}(X)$, the space of zero integral functions,  by Lemma \ref{imde}.
If we write $f = f_0 + c_f$ 
with $f_0 \in C^\oo_{c,0}(X)$ and $c_f \in \R$, and similarly
$g = g_0 + c_g$,  then $[fdg] = [(f_0+ c_f) dg_0] = - [df_0 g_0] = [f_0dg_0]$.
Since we can choose $\phi, \gamma \in \Omega^1(X)$ with $f_0 = \delta\phi$ and $g_0 = \delta\gamma$,
we see that 
$[\alpha] = [\delta\phi \cdot d\delta\gamma] =[[\phi],[\gamma]]$ is a commutator.

It suffices to show, then, that $\alpha \in \Omega^1(X)$ can be written as a finite sum 
of elements of the form $fdg$.
As in the proof of Prop.~\ref{Prop:noncpspoisson}, we use dimension theory
(\cite[Thm.~V1]{Hurewicz41}) to find
a cover $U_{k,r}$ of $X$ 
by open Darboux coordinate patches $U_{k,r}$,
where
$k \in \mathbb{N}$ is a countable index, $r \in \{1,\ldots,2n+1\}$ is a finite index,
and $U_{k,r} \cap U_{k',r} = \emptyset$ for all $k\neq k'$.
Using a partition of unity, we write 
$\alpha = \sum_{k=1}^{\infty}\sum_{r=1}^{2n+1}\alpha^{k,r}$
with $\alpha^{k,r} \in \Omega^1_{c}(U_{k,r})$.
The intersection properties of the $U_{k,r}$ ensure that  
in a single point $x\in X$, the sum has at most $2n+1$ nonzero terms.
In local coordinates $x^{\mu}$, we have $\alpha^{k,r} = \sum_{\mu=1}^{2n} \alpha^{k,r}_{\mu}dx^{\mu}$.
Choosing compactly supported functions $\xi^{k,r,\mu}$ on $U_{k,r}$ that agree with $x^{\mu}$ 
on the support of $\alpha^{k,r}_{\mu}$, we see that $\alpha^{k,r} = \sum_{\mu=1}^{2n}\alpha^{k,r}_{\mu} d \xi^{k,r, \mu}$
is of the required form.
If we set $f^{r}_{\mu} := \sum_{k\in \N} \alpha^{k,r}_{\mu}$ and $g^{r,\mu} := \sum_{k\in \N} \xi^{k,r,\mu}$,
then because $\alpha^{k,r}_{\mu}d\xi^{k',r,\nu} = 0$ for $k \neq k'$, we have 
\[
\sum_{r=1}^{2n+1}\sum_{\mu = 1}^{2n} f^{r}_{\mu} dg^{r,\mu} = 
\sum_{r=1}^{2n+1}\sum_{\mu = 1}^{2n}\sum_{k\in \N}  \alpha^{k,r}_{\mu}d\xi^{k,r,\mu} = 
\sum_{r=1}^{2n+1}\sum_{k\in \N} \alpha^{k,r} =
\alpha\,.
\]
It follows that for $X$ connected, $[\alpha]$ is a sum of at most $2n(2n+1)$ commutators
$[\phi^{r}_{\mu}]$ and $[\gamma^{r,\mu}]$, with $\delta \phi^{r}_{\mu} = f^{r}_{\mu}$ and 
$\delta \gamma^{r,\mu} = g^{r,\mu}$ if $X$ is noncompact, and
$\delta \phi^{r}_{\mu} = f^{r}_{\mu 0}$ and $\delta \gamma^{r,\mu} = g^{r,\mu}_{0}$ if $X$ is compact. 

If $X$ is not connected, then we find $\phi^{r,x}_{\mu}$ and $\gamma^{r,\mu, x}$ as above for every 
connected component $X_{x}$ of $X$, 
so that on $X_x$, we have 
\[[\alpha|_{X_x}] = \sum_{r=1}^{2n+1}\sum_{\mu = 1}^{2n} [\delta\phi^{r,x}_{\mu} d\delta \gamma^{r,\mu,x}]\,.\]
If we set $\phi^{r}_{\mu} = \sum_{x} \phi^{r,x}_{\mu}$ and 
$\gamma^{r,\mu} := \sum_{x} \gamma^{r,\mu,x}$, then 
since $\delta\phi^{r,x}_{\mu} d\delta \gamma^{r,\mu,x'} = 0$ for $X_{x} \neq X_{x'}$, we have
\[[\alpha] = \sum_{r=1}^{2n+1}\sum_{\mu = 1}^{2n} [[\phi^{r}_{\mu}], [\gamma^{r,\mu}]]\]
as a sum of at most $2n(2n+1)$ commutators.
\end{proof}
 \begin{Remark}
 It follows from the previous proof that if $X$ is of dimension $2n$, then every element of $\Omega^1(X)/\delta\Omega^2(X)$
 can be written as a sum of at most $2n(2n+1)$ commutators, cf. Rk.~\ref{Rk:numbercomm}.
 \end{Remark}

Recall from Sec.~\ref{sec:contcohomo} 
that a linearly split central extension is called \emph{universal}
for $\mathfrak{a}$ if it maps to every linearly split central $\mathfrak{a}$-extension, 
and \emph{universal} if it is $\mathfrak{a}$-universal
for every locally convex space $\mathfrak{a}$.

\begin{Theorem}\label{universalHam}
Let $X$ be a connected symplectic manifold. 
Then the continuous central extension
\be\label{runiverseII}
\mathfrak{z}
\stackrel{\iota}{\longrightarrow} 
\Omega^1(X)/\delta\Omega^2(X) \stackrel{\pi}{\longrightarrow} \ham(X)
\ee
of Fr\'echet Lie algebras (cf.\ Prop.~\ref{beschrijving})
is linearly split, and $\mathfrak{a}$-universal for finite dimensional vector spaces $\mathfrak{a}$.
If, moreover, $H^{2n-1}_{\mathrm{dR}}(X)$ is finitely generated, then it 
is  the universal central extension of $\ham(X)$. 
\end{Theorem}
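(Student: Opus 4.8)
The plan is to prove the three assertions — linear splitting, $\fa$-universality for finite-dimensional $\fa$, and full universality under the finite-generation hypothesis — leaning on the perfectness results already established and on the explicit description of $H^2(\ham(X),\R)$ from Theorem~\ref{H2Ham}. The functional-analytic heart of the matter is a \emph{key vanishing}: the pullback $\pi^*\colon H^2(\ham(X),\R)\to H^2(\Omega^1(X)/\delta\Omega^2(X),\R)$ is zero. Indeed, every continuous functional $S$ on $\Omega^1(X)/\delta\Omega^2(X)$ yields a cochain $\psi_S(X_f,X_g):=S([fdg])$ on $\ham(X)$, well defined since $[dg]=0$ (Lemma~\ref{lem1}) and a cocycle because the cyclic sum $\sum_{\mathrm{cycl}}[\{f,g\}dh]$ vanishes in $\Omega^1/\delta\Omega^2$, as one reads off from \eqref{rkform2}. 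By Theorem~\ref{H2Ham} and Proposition~\ref{hamcocycles} every class in $H^2(\ham(X),\R)$ is represented by a Roger cocycle $\psi_\alpha=\psi_{S_\alpha}$ with $S_\alpha([\beta])=\int_X *\beta\wedge\alpha$, so the $\psi_S$ exhaust $H^2(\ham(X),\R)$. Using $[[\beta],[\gamma]]=[\delta\beta\cdot d\delta\gamma]$ (cf.\ \eqref{haakje}) and $\pi([\beta])=X_{\delta\beta}$ one computes
\[
(\pi^*\psi_S)([\beta],[\gamma])=\psi_S(X_{\delta\beta},X_{\delta\gamma})=S([\delta\beta\cdot d\delta\gamma])=S([[\beta],[\gamma]])=-(\dd S)([\beta],[\gamma]),
\]
so $\pi^*\psi_S=-\dd S$ is a coboundary, whence $\pi^*=0$.

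With this in hand, $\fa$-universality for finite-dimensional $\fa$ is a direct construction. Given a linearly split central extension $\fa\to\fg^\sharp\xrightarrow{q}\ham(X)$ with continuous linear section $\tau$ and cocycle $\omega^\sharp(u,v)=[\tau u,\tau v]-\tau[u,v]$, I would set $\phi:=\tau\circ\pi+\Lambda$, where $\Lambda\colon\Omega^1/\delta\Omega^2\to\fa$ is continuous linear with $\dd\Lambda=-\pi^*\omega^\sharp$. Such a $\Lambda$ exists because $\pi^*\omega^\sharp$ is a continuous coboundary: for finite-dimensional $\fa$ we have $H^2(\ham(X),\fa)=H^2(\ham(X),\R)\otimes\fa$, on which $\pi^*$ vanishes by the key vanishing. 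A short computation using centrality of $\fa$ gives $[\phi\xi,\phi\eta]=\tau\pi[\xi,\eta]+\pi^*\omega^\sharp(\xi,\eta)=\phi([\xi,\eta])$, so $\phi$ is a continuous homomorphism with $q\circ\phi=\pi$ and $\phi|_{\fz}=\Lambda|_{\fz}\colon\fz\to\fa$. Uniqueness follows from perfectness of $\Omega^1/\delta\Omega^2$ (Proposition~\ref{extisperfect}): two lifts differ by a map into $\fa$ killing all commutators, hence vanishing.

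For the linear splitting I would produce a continuous linear section $\sigma$ of $\pi$ by inverting $\delta$. The map $\hat\delta([\alpha]):=\delta\alpha$ is a continuous surjection $\Omega^1/\delta\Omega^2\to\mathrm{Im}(\delta)$ (Lemma~\ref{imde}) with kernel $H^{\can}_1(X)\cong H^{2n-1}_{\dR}(X)$; composing a continuous right inverse $R$ of $\hat\delta$ with a continuous linear section $t$ of $f\mapsto X_f$ (taking values in $C^\infty_{c,0}(X)$ when $X$ is compact) gives $\sigma=R\circ t$, and $\pi\sigma=\mathrm{id}$. When $H^{2n-1}_{\dR}(X)$ is finite-dimensional, its contribution to the kernel is automatically complemented, so $R$ exists at once; in general I would build $R$ from the Poincaré-lemma homotopy operators on Darboux patches, patched over the paving cover $U_{k,r}$ of Proposition~\ref{extisperfect}. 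Finally, under the finite-generation hypothesis $\fz$ and $H^2(\ham(X),\R)\cong\fz'$ are finite-dimensional, so by \cite[Cor.~2.13]{Neeb03} a universal central extension $\mathfrak u\to\ham(X)$ exists, with finite-dimensional centre. Finite-dimensional $\fa$-universality applied to $\fa=Z(\mathfrak u)$ yields a morphism $\Omega^1/\delta\Omega^2\to\mathfrak u$, while universality of $\mathfrak u$ applied to the (now linearly split) extension $\Omega^1/\delta\Omega^2$ yields a morphism back; both composites cover $\mathrm{id}_{\ham(X)}$ and hence, by perfectness, are the identity, so $\Omega^1/\delta\Omega^2\cong\mathfrak u$ is universal.

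The main obstacle, I expect, is the linear splitting when $H^{2n-1}_{\dR}(X)$ is infinite-dimensional: producing a \emph{continuous linear} right inverse to $\delta$ requires more than the open-mapping theorem, and the delicate point is to patch the local homotopy operators across the paving cover while controlling the top-degree de Rham obstruction that resurfaces patch by patch, exactly as in the arguments of Propositions~\ref{Prop:noncpspoisson} and~\ref{extisperfect}.
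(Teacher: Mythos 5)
Your treatment of universality is sound and is essentially the paper's own argument in different packaging: the ``key vanishing'' $\pi^*=0$ on $H^2(\ham(X),\R)$ is exactly what the paper establishes by factoring $S_\alpha(\beta)=-\int_X\alpha\wedge*\beta$ through $\Omega^1(X)/\delta\Omega^2(X)$; your map $\phi=\tau\circ\pi+\Lambda$ is the paper's $\phi([\beta])=(\overline{S}_\alpha([\beta]),\pi([\beta]))$ after identifying $\fg^{\sharp}$ with $\fa\oplus_{\omega^{\sharp}}\ham(X)$; and uniqueness via perfectness (Prop.~\ref{extisperfect}) together with the final comparison against the abstract universal extension of \cite[Cor.~2.13]{Neeb03} are identical. (The paper passes from $\R$-universality to finite-dimensional $\fa$ by citing \cite[Lemma~1.12]{Neeb03} rather than by decomposing the cocycle in a basis of $\fa$; both are fine.)

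The genuine gap is the linear splitting when $H^{2n-1}_{\mathrm{dR}}(X)$ is infinite dimensional --- and the theorem asserts the splitting for every connected $X$, not only under the finite-generation hypothesis. Your reduction is correct: it suffices to right-invert $\hat\delta\colon\Omega^1(X)/\delta\Omega^2(X)\to\mathrm{Im}(\delta)$ continuously, equivalently (via $*$) to split $d\colon\Omega^{2n-1}(X)\to\Omega^{2n}(X)$ onto its image. But the patching you propose does not work as stated: writing $f\,\vol=\sum_{k,r}\lambda_{k,r}f\,\vol$ with a partition of unity subordinate to the paving cover, the Poincar\'e homotopy operator on $U_{k,r}$ produces a primitive of $\lambda_{k,r}f\,\vol$ that is \emph{not} compactly supported in $U_{k,r}$ (a top-degree form on a ball has a compactly supported primitive only if its integral vanishes), so the local primitives cannot be summed over the locally finite cover to a global form; this is exactly the top-degree obstruction you mention, and it does not disappear ``patch by patch''. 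The trick of Prop.~\ref{ALDM} and Prop.~\ref{extisperfect} starts from a globally given primitive $\psi$ of $f\,\vol$ and decomposes \emph{it}, which presupposes exactness and, more importantly, provides no continuous linear dependence of $\psi$ on $f$. The paper closes this gap by quoting Palamodov \cite[Prop.~5.4]{Palamodov1972}: the de Rham complex of a second countable manifold is split and cosplit as a complex of Fr\'echet spaces, hence so is the isomorphic canonical complex, and splittings in degrees $0$ and $1$ assemble to the required section of $\pi$. Your argument is complete in the finitely generated case (finite-dimensional kernel plus the open mapping theorem), which suffices for the last assertion of the theorem, but the unconditional splitting claim needs the Palamodov input or an equivalent.
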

\begin{proof}
By Prop.~\ref{extensionvscohomology} and Thm.~\ref{H2Ham}, every 
central $\R$-extension of $\ham(X)$ is isomorphic to 
$\R \oplus_{\psi_{\alpha}}\ham(X)$,
with Lie bracket 
\be\label{exthaakje}
\big[ (a,X_{f}), (b, X_{g})\big] = \Big(\psi_{\alpha}(X_f, X_g), X_{\{f,g\}} \Big)\,.
\ee
The class $[\psi_{\alpha}]\in H^2(\ham(X),\R)$ is determined by
$\alpha \in Z^1_{ c}(X)/d\Omega^{0}_{c, 0}(X)$, cf.\ eqn.~\eqref{voorham}.
Using 
\[
f\cdot(i_{X_{g}}\alpha)\omega^n/n! \stackrel{\eqref{kleintje}}{=} -\alpha\wedge(fdg)\wedge \omega^{n-1}/(n-1)!
\stackrel{\eqref{ster}}{=}-\alpha \wedge *(fdg)\,,
\]
we rewrite the cocycle $\ps_\al$ 
as \[\psi_{\alpha}(X_{f},X_{g})=\int_X f\cdot(i_{X_g}\al)\vol = S_{\alpha}(fdg)\,,\]
with $S_{\alpha} \colon \Omega^{1}(X)\rightarrow \R$ given by 
\[
S_{\alpha}(\beta) := -\int_{X}\alpha \wedge *\beta\,.
\]
Note that $S_{\alpha}$ factors through a functional
$\overline{S}_{\alpha} \colon \Omega^{1}(X)/\delta\Omega^2(X)\rightarrow \R$,
because for $F \in \Omega^2(X)$ we have
$\int_{X}\alpha \wedge * \delta F = \int_{X}\alpha \wedge d * F
=\int_{X}d(\alpha \wedge *F) = 0$.

For every central $\R$-extension of $\ham(X)$, we thus 
obtain a continuous linear map  
\[\phi \colon \Omega^{1}(X)/\delta\Omega^2(X) \rightarrow \R \oplus_{\psi_{\alpha}} \ham(X)\]
defined by
\be \label{eq:mapto1d}
\phi([\beta]) := (\overline{S}_{\alpha}([\beta]), \pi(\beta))\,.
\ee
This is a Lie algebra homomorphism because 
 $[[\beta],[\beta']] = [\delta\beta\cdot d\delta\beta']$ by definition of the Lie bracket on $\Omega^{1}(X)/\delta\Omega^2(X)$, so
\[\overline{S}_{\alpha}([[\beta],[\beta']]) 
=S_\al(\de\beta\cdot d\de\beta')
= \psi_{\alpha}(X_{\delta\beta},X_{\delta\beta'})\,.
\]
The homomorphism $\phi$ is unique with the property that the $\ham(X)$-valued component
coincides with $\pi$.
Indeed, if
$\phi' \colon \Omega^{1}(X)/\delta\Omega^2(X) \rightarrow \R \oplus_{\psi_{\alpha}} \ham(X)$
is another such homomorphism, then 
$\phi - \phi'$ is a 
Lie algebra homomorphism $\Omega^{1}(X)/\delta\Omega^2(X) \rightarrow \R \oplus \{0\}$, hence trivial
because $\Omega^{1}(X)/\delta\Omega^{2}(X)$ is perfect (cf. Prop.~\ref{extisperfect}).
This shows that the continuous central extension \eqref{runiverseII} is $\R$-universal,
hence universal for finite dimensional vector spaces $\mathfrak{a}$ by \cite[Lemma~1.12]{Neeb03}.

A linear splitting of \eqref{runiverseII} exists because the de Rham
complex $(\Omega^{\bullet}(X), d)$, and hence the isomorphic canonical complex 
$(\Omega^{\bullet}(X), \delta)$,
is split and cosplit as a complex of Fr\'echet spaces \cite[Prop.~5.4]{Palamodov1972}.
A linear splitting $\ham(X) \rightarrow C^{\infty}(X)$ corresponds to a splitting of the canonical
complex 
in degree $0$, and a splitting $C^{\infty}(X) \rightarrow \Omega^1(X)$ 
to a splitting in degree 1.
If $X$ is compact, then a splitting of the de Rham complex can also be obtained using Hodge theory.

If $H^{2n-1}_{\dR}(X)$ is finite dimensional, then so is $H^2(\ham(X), \R)$, so a universal central extension with 
finite dimensional kernel
exists by \cite[Lem.~2,7, Cor.~2.13]{Neeb03}. 
Since this universal central extension and the extension \eqref{runiverseII} 
are both linearly split as well as $\mathfrak{a}$-universal for finite dimensional $\mathfrak{a}$,
they are isomorphic, and \eqref{runiverseII} is universal. 
\end{proof}
\begin{Remark}
If we identify $\Omega^1(X)/\delta\Omega^2(X)$ with $\Omega^{2n-1}(X)/d\Omega^{2n-2}(X)$ 
according to Remark~\ref{RkOmega2n},
the morphism \eqref{eq:mapto1d} 
to the central extension $\RR\oplus_{\psi_\al}\ham(X)$ 
with cocycle $\psi_\al$ defined in \eqref{voorham} takes the particularly simple form
\[
{}[\ga]\mapsto(-\int_X\al\wedge\ga,X_f)\,,
\]
where the function $f$ is given by $f\vol=d\ga$.
\end{Remark}

From the above theorem, one readily derives the universal central extension of the Poisson 
algebra $C^{\infty}(X)$ for a noncompact connected manifold $X$.
If $X$ is compact, no universal central extension exists 
because  $C^{\infty}(X)$ 
is not perfect (cf. Prop.~\ref{ALDM}).

\begin{Corollary}\label{Cor:univpois}
Let $X$ be a noncompact connected symplectic manifold. Then the continuous central extension 
\[ H^{\mathrm{can}}_{1}(X) \rightarrow \Omega^1(X)/\delta\Omega^2(X)\stackrel{\delta}{\longrightarrow} C^{\infty}(X)\] 
is linearly split and $\mathfrak{a}$-universal for finite dimensional spaces $\mathfrak{a}$.
If, moreover, $H^{\mathrm{can}}_{1}(X) \simeq H^{2n-1}_{\mathrm{dR}}(X)$ is finitely generated,
then it is the universal central extension of $C^{\infty}(X)$.
\end{Corollary}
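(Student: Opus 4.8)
The plan is to mirror the proof of Theorem~\ref{universalHam} almost verbatim, replacing the projection $\pi([\alpha]) = X_{\delta\alpha}$ onto $\ham(X)$ by the finer projection $[\alpha] \mapsto \delta\alpha$ onto $C^{\infty}(X)$, and correspondingly $H^2(\ham(X),\R)$ by $H^2(C^{\infty}(X),\R)$. First I would confirm that \eqref{introcepois} is a continuous central extension. The map $\delta \colon \Omega^1(X)/\delta\Omega^2(X) \to C^{\infty}(X)$ is surjective by Lemma~\ref{imde} (since $X$ is noncompact), it is a Lie algebra homomorphism for the Poisson bracket because $\delta(\delta\alpha \cdot d\delta\beta) = \{\delta\alpha,\delta\beta\}$ by \eqref{rkform1}, and its kernel is $\mathrm{Ker}(\delta \colon \Omega^1(X) \to \Omega^0(X))/\delta\Omega^2(X) = H^{\mathrm{can}}_1(X)$. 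This kernel is central, since $\delta\alpha = 0$ forces $[[\alpha],[\beta]] = [\delta\alpha \cdot d\delta\beta] = 0$ for all $[\beta]$; equivalently $H^{\mathrm{can}}_1(X) \subseteq \mathfrak{z}$, the center identified in Prop.~\ref{beschrijving}. A continuous linear splitting exists for the same reason as in Theorem~\ref{universalHam}: the canonical complex $(\Omega^{\bullet}(X),\delta)$ is split as a complex of Fr\'echet spaces \cite{Palamodov1972}, and a splitting in degree one, that is a continuous section of $\delta \colon \Omega^1(X) \to \Omega^0(X)$, descends to a section of \eqref{introcepois}.

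For $\R$-universality I would argue as in Theorem~\ref{universalHam}. By Prop.~\ref{extensionvscohomology} and Thm.~\ref{noncptpoissonh2}, every central $\R$-extension of $C^{\infty}(X)$ is isomorphic to $\R \oplus_{\psi_\alpha} C^{\infty}(X)$ for a closed, compactly supported $1$-form $\alpha$, with $\psi_\alpha(f,g) = \int_X f(i_{X_g}\alpha)\vol$. Using \eqref{kleintje} and \eqref{ster} one rewrites $\psi_\alpha(f,g) = S_\alpha(fdg)$, where $S_\alpha(\beta) := -\int_X \alpha \wedge *\beta$, and $S_\alpha$ descends to $\overline{S}_\alpha \colon \Omega^1(X)/\delta\Omega^2(X) \to \R$ because $\int_X \alpha \wedge *\delta F = \int_X d(\alpha \wedge *F) = 0$ for $F \in \Omega^2(X)$. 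I would then define
\[
\phi \colon \Omega^1(X)/\delta\Omega^2(X) \to \R \oplus_{\psi_\alpha} C^{\infty}(X), \qquad \phi([\beta]) := (\overline{S}_\alpha([\beta]),\, \delta\beta),
\]
and check it is a Lie algebra homomorphism: its $C^{\infty}(X)$-component on $[[\beta],[\beta']]$ is $\delta(\delta\beta \cdot d\delta\beta') = \{\delta\beta,\delta\beta'\}$ by \eqref{rkform1}, while its $\R$-component is $\overline{S}_\alpha([\delta\beta \cdot d\delta\beta']) = S_\alpha(\delta\beta \cdot d\delta\beta') = \psi_\alpha(\delta\beta,\delta\beta')$, and together these match the bracket of the extension. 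Since the $C^{\infty}(X)$-component of $\phi$ is exactly $\delta$, the universality square commutes; and $\phi$ is unique with this property, because any two such maps differ by a homomorphism into the abelian ideal $\R \oplus \{0\}$, which vanishes as $\Omega^1(X)/\delta\Omega^2(X)$ is perfect (Prop.~\ref{extisperfect}). Thus \eqref{introcepois} is $\R$-universal, hence $\mathfrak{a}$-universal for finite dimensional $\mathfrak{a}$ by \cite[Lemma~1.12]{Neeb03}.

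Finally, for full universality when $H^{\mathrm{can}}_1(X) \simeq H^{2n-1}_{\mathrm{dR}}(X)$ is finitely generated, I would observe that $C^{\infty}(X)$ is a perfect Fr\'echet Lie algebra by Prop.~\ref{Prop:noncpspoisson}, since the connected noncompact manifold $X$ has no compact connected components, and that by Thm.~\ref{noncptpoissonh2} together with Poincar\'e duality $H^2(C^{\infty}(X),\R) \simeq H^1_{\mathrm{dR},c}(X) \simeq H^{2n-1}_{\mathrm{dR}}(X)^*$ is finite dimensional. Hence a universal central extension with finite dimensional kernel exists by \cite[Cor.~2.13]{Neeb03}; being both linearly split and universal for finite dimensional centers, it must be isomorphic to \eqref{introcepois}, so \eqref{introcepois} is itself universal.

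The computations are routine analogues of those already carried out for $\ham(X)$, so I do not expect a serious obstacle. The two genuinely new points requiring care are: verifying that the finer projection $\delta$ (rather than $\pi$) still yields a central extension, now with kernel exactly $H^{\mathrm{can}}_1(X)$ rather than the full center $\mathfrak{z}$; and ensuring the homomorphism $\phi$ built from $\psi_\alpha$ has $C^{\infty}(X)$-component equal to $\delta$ so that the universality diagram commutes on the nose. The finiteness step additionally rests on invoking Poincar\'e duality to transfer the hypothesis on $H^{2n-1}_{\mathrm{dR}}(X)$ to finite dimensionality of $H^2(C^{\infty}(X),\R)$.
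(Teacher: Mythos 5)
Your proof is correct, but it takes a genuinely different route from the paper's. You re-run the explicit construction of Theorem~\ref{universalHam} one level up: classify the central $\R$-extensions of $C^{\infty}(X)$ via Theorem~\ref{noncptpoissonh2}, realise each by a Roger cocycle $\psi_{\alpha}$ with $\alpha\in\Omega^1_c(X)$ closed, build the comparison map $\phi([\beta])=(\overline{S}_{\alpha}([\beta]),\delta\beta)$ by hand, and get uniqueness from perfectness of $\Omega^1(X)/\delta\Omega^2(X)$. The paper instead deduces the corollary \emph{from} Theorem~\ref{universalHam}: by eqn.~\eqref{nulopeen} every $\mathfrak{a}$-valued 2-cocycle on $C^{\infty}(X)$ vanishes on the constants, so composing any linearly split central extension $\mathrm{pr}\colon\fg^{\sharp}\to C^{\infty}(X)$ with the quotient $C^{\infty}(X)\to\ham(X)$ yields a linearly split central extension of $\ham(X)$; the universal property over $\ham(X)$ then produces a unique $\phi\colon\Omega^1(X)/\delta\Omega^2(X)\to\fg^{\sharp}$ lying over $\pi$, and $\mathrm{pr}\circ\phi=\delta$ because both are Lie algebra homomorphisms lifting $\pi$ through $C^{\infty}(X)\to\ham(X)$ and two such lifts differ by a character of the perfect source, hence coincide. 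The paper's reduction is shorter, needs no fresh cocycle computation, and covers infinite-dimensional $\mathfrak{a}$ directly from the statement of Theorem~\ref{universalHam}; your version is more self-contained and explicit, at the cost of re-invoking Theorem~\ref{noncptpoissonh2} and handling full universality through the abstract existence result of Neeb together with Poincar\'e duality. Both arguments are sound, and your two flagged "points requiring care" (the kernel of $\delta$ being exactly $H^{\mathrm{can}}_1(X)$, and the $C^{\infty}(X)$-component of $\phi$ being $\delta$ on the nose) are indeed the right things to check and are handled correctly.
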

\begin{proof}
By eqn.\ \eqref{nulopeen}, 
every $\mathfrak{a}$-valued 2-cocycle $\psi$ on $C^{\infty}(X)$ vanishes on the constant functions.
It follows that every linearly split continuous central extension 
$\mathfrak{a} \rightarrow \fg^{\sharp} \stackrel{\mathrm{pr}}{\longrightarrow} C^{\infty}(X)$ 
of $C^{\infty}(X)$ defines, by concatenation with the map $C^{\infty}(X) \rightarrow \ham(X)$ 
that takes a hamilton function to the corresponding hamiltonian vector field, 
a linearly split continuous central extension of
$\ham(X)$. 
If either $\mathfrak{a}$ or $H^{2n-1}_{\mathrm{dR}}(X)$ is finite dimensional, then 
Thm.~\ref{universalHam} yields a unique
continuous homomorphism $\phi \colon {\Omega^1(X)/\delta\Omega^2(X) \rightarrow \fg^{\sharp}}$
such that the big triangle in the following diagram commutes:
\begin{center}
\centerline{
\xymatrix{
&&\Omega^{1}(X)/\delta\Omega^2(X) \ar[ld]_{\phi}\ar[d]^{\delta}\ar[rd]^{\pi}&\\
\mathfrak{a}\ar[r]&\fg^{\sharp}\ar[r]^{\mathrm{pr}}&C^{\infty}(X)\ar[r]&\ham(X)\,.&
}
}
\end{center}
Since $\delta$ is the unique map which makes the right hand triangle commute, 
we have $\mathrm{pr} \circ \phi = \delta$. For noncompact $X$, 
the cokernel
$H^{\mathrm{can}}_{0}(X) \simeq H^{2n}_{\mathrm{dR}}(X)$
of the Lie algebra homomorphism 
$\delta \colon \Omega^1(X)/\delta\Omega^{2}(X) \rightarrow C^{\infty}(X)$ vanishes,
so $\delta$ is surjective with kernel $H^{\mathrm{can}}_{1}(X)$.
\end{proof}

\begin{Remark}
The Lie algebra of hamiltonian vector fields $\ham(X)$ is perfect
for any symplectic manifold $X$.
Indeed, it is the image by $\pi$ of the perfect Lie algebra $\Omega^1(X)/\delta\Omega^2(X)$
by the Lie algebra homomorphism $\pi$.
The Poisson Lie algebra $C^\oo(X)$ in the noncompact case
and $C^\oo_0(X)$ in the compact case
are also perfect, as images of the perfect Lie algebra $\Omega^1(X)/\delta\Omega^2(X)$
by the Lie algebra homomorphism $\de$.
Thus we recover Corollary \ref{hamp}, as well as the `perfect part' of 
Corollary \ref{cptsuppfirstcohomology} and Proposition \ref{Prop:noncpspoisson}.
\end{Remark}


\paragraph{Singular cocycles and Roger cocycles revisited.}

The universal central extension $\Omega^1(X)/\delta\Omega^2(X)$ of $\ham(X)$
yields a straightforward way to decide when the Roger cocycles $\psi_{\alpha}$ 
and singular cocycles  $\psi_{N}$ defined on page \pageref{RogerandSingular}
 are cohomologous.

By the universal property of (\ref{runiverse}) for a connected, symplectic manifold $X$,
continuous 2-cocycles $\psi_{}$ of $\ham(X)$
correspond bijectively to continuous linear functionals $S$
on $\Omega^{1}(X)/\delta\Omega^2(X)$
by
\[
\psi_{S}(X_{f},X_{g}) = S([fdg])\,,
\]
and two cocycles are cohomologous, i.e. $\psi_{S} \sim \psi_{S'}$, if and only if
$S - S'$
vanishes on the center~$\mathfrak{z}= \mathrm{Ker}(d\circ\delta)/\delta\Omega^2(X)$.

The singular 2-cocycles $\psi_{N}$  come from $(2n-1)$-cycles 
$N = \sum_{i=1}^{k} c_{i}\sigma_{i}$ in the singular homology of $X$, 
with real coefficients $c_i$ and 
piecewise smooth maps $\sigma_{i} \colon \Delta^{2n-1} \rightarrow X$. 
The functional $S_{N}([\beta]) = \int_{N} *\beta$
on $\Omega^1(X)/\delta\Omega^2(X)$ yields the singular 2-cocycle
\be\label{singcocycle}
	\psi_{N}(X_{f}, X_{g}) =S_N([fdg])= \int_{N} fdg \wedge \omega^{n-1}/(n-1)!\,.
\ee

If $X$ is compact, then $\psi_{N} \sim \psi_{N'}$ if and only if
$N \sim N'$, 
in the sense that
$N - N'= \partial M$ 
for a singular $2n$-chain $M$ in $X$ with real coefficients. 
Indeed, $\psi_{N} \sim \psi_{N'}$ if and only if $\int_{N-N'} *\beta =0$ 
for all $\beta \in \Omega^{1}(X)$ with $d\delta \beta = 0$. For $X$ compact, $d\delta \beta = 0$ is equivalent to 
$d(*\beta) = 0$, so by Poincar\'e duality, $\psi_{N} \sim \psi_{N'}$ if and only if 
$N - N' = \partial M$.
In this case we get  an alternative description of
$H^2(\ham(X),\R)$ in terms of singular homology.
The correspondence $[N]\mapsto[\psi_N]$ yields and isomorphism between 
$H_{2n-1}(X,\R)$ and $H^2(\ham(X),\R)$.

If $X$ is noncompact, then $\psi_{N} \sim \psi_{N'}$ if and only if 
$N - N'= \partial M$, with the additional requirement that $M$ has zero 
symplectic volume, $\int_{M} \vol = 0$.
Indeed, for $X$ noncompact, $d\delta \beta = 0$ is equivalent to 
$d(*\beta) = c\vol$ for some $c \in \R$. From $c=0$, we find 
$N - N' = \partial M$, and $c\neq 0$ yields the further requirement 
$\int_{M} \vol = 0$.

The Roger 2-cocycles $\psi_{\alpha}$ come from  compactly supported, closed 1-forms $\alpha \in \Omega_{c}^1(X)$,
\be
\psi_{\alpha}(X_f,X_g) = \int_{X} f(i_{X_g}\alpha) \vol\,,
\ee
via the functionals $S_{\alpha}([\beta]) := \int_{X}*\beta \wedge \alpha$.
Two such cocycles are cohomologous, i.e. $\psi_{\alpha} \sim \psi_{\alpha'}$, if and only if $\alpha - \alpha' = dh$ with $\int_{X} h \vol = 0$. 
This yields the isomorphism $H^2(\ham(X),\R) \simeq Z^1_{c}(X)/d\Omega^0_{c,0}(X)$
of Theorem \ref{H2Ham}.

In the same vein, we have $\psi_{\alpha} \sim \psi_{N}$ if and only if 
$\int_{N} \gamma = \int_{X}\alpha\wedge \gamma$ 
for every $\gamma \in \Omega^{2n-1}(X)$ with 
$\delta d \gamma = 0$.
In the compact case, 
this reduces to
the requirement is that $[N] \in H_{2n-1}(X,\R)$ be Poincar\'e dual to
 $[\alpha] \in H^1_{\mathrm{dR},c}(X)$.

\section{Central extensions of $\smp(X)$}\label{s6}

Let $X$ be a connected symplectic manifold.
In order to determine $H^2(\smp(X), \R)$, we observe that 
$\ham(X)$ is a perfect, closed, complemented ideal in $\smp(X)$
with abelian quotient $\smp(X)/\ham(X) \simeq H^1_{\mathrm{dR}}(X)$.

\subsection{A 5-term exact sequence}\label{sec:5term}

If $\fh$ is a perfect, closed, complemented ideal in a Fr\'echet Lie algebra $\fg$,
then it is shown in \cite[Thm.~2.3]{Vizman2006} that the exact sequence
\begin{equation}\label{fgh}
0 \rightarrow \fh \stackrel{\iota}{\longrightarrow} \fg 
\stackrel{p}{\longrightarrow} \fg/\fh \rightarrow 0
\end{equation}
of Fr\'echet Lie algebras gives rise to a 5-term exact sequence
\be\label{5term}
0 \rightarrow H^2(\fg/\fh , \R)
\stackrel{p^*}{\rightarrow} H^2(\fg, \R)
\stackrel{\iota^*}{\rightarrow} H^2(\fh, \R)^{\fg}
\stackrel{T}{\longrightarrow} H^3(\fg/\fh, \R) 
\stackrel{p^*}{\rightarrow} H^3(\fg, \R)
\ee
in continuous Lie algebra cohomology.

The transgression map 
$T \colon H^2(\fh, \R)^{\fg}
\rightarrow H^3(\fg/\fh, \R)$ is defined as follows.
If $[\psi] \in H^2(\fh, \R)$ is $\fg$-invariant, then for each $v\in \fg$, 
there exists a unique $\theta_v \in \fh'$ such that $L_{v}\psi = \dd \theta_{v}$.
Moreover, the map $\fg \rightarrow \fh' $ given by $ v \mapsto \theta_v$
is linear, and the corresponding map $\theta \colon \fg \times \fh \rightarrow \R$ continuous  \cite[Lemma~2.1]{Vizman2006}.
If $\psi' \colon \fg \times \fg \rightarrow \R$ is a skew-symmetric continuous extension of $\theta$,
then $\dd\psi'$ induces a 3-cocycle $\ol{\dd \psi'}$ on $\fg/\fh$, whose class 
$[\ol{\dd \psi'}] \in H^3(\fg/\fh)$ does not depend on the choice of $\psi'$.
The transgression is defined as $T([\psi]) := [\ol{\dd \psi'}]$.

\begin{Remark}
Since $H^1(\h,\R)=0$, the transgression map $T$ coincides with the map $d_3$ 
for the Hochschild-Serre spectral sequence associated to the filtration 
\[F^pC^{p+q}(\g)=
\{\si\in C^{p+q}(\g):i_{H_1}\dots i_{H_{q+1}}\si=0, \text{ for all } H_i\in\h\}\,.
\]
\end{Remark}

From \eqref{5term} with $\fh = \ham(X)$ and $\fg = \smp(X)$, we find 
\begin{equation}\label{kert}
H^2(\smp(X),\R) \simeq {\textstyle \bigwedge^2} H^1_{\mathrm{dR}}(X)^* \oplus \mathrm{Ker}(T)\,,
\end{equation}
where the map $\bigwedge^2 H^1_{\mathrm{dR}}(X)^* \rightarrow H^2(\smp(X),\R)$
takes $\gamma \in \bigwedge^2 H^1_{\mathrm{dR}}(X)^*$
to the class of $\psi_{\gamma}(v,w) := \gamma([i_v\omega], [i_w\omega])$.
For example, if $X$ is compact, the alternating pairing
$\gamma([\alpha], [\beta]) = \int_{X} \alpha \wedge \beta \wedge \omega^{n-1}/(n-1)!$
gives rise to the Lie algebra cocycle described in \cite{IsmagilovLosikMichor2006}.

\subsection{The $\smp(X)$-invariants in $H^2(\ham(X),\R)$}

In order to determine the remaining part $\mathrm{Ker}(T)$ of the second cohomology group of $\smp(X)$,
we must first determine the $\smp(X)$-invariant part of $H^2(\ham(X), \R)$, 
and then calculate the transgression map $T$.
Recall from Theorem~\ref{H2Ham} that every class $[\psi] \in H^2(\ham(X), \R)$
has a representative of the form
\[
\psi_{\alpha}(X_f, X_g) = \int_X f \alpha(X_g)\vol
\]
for a closed compactly supported 1-form $\alpha \in \Omega^1_{c}(X)$, and that $[\psi_{\alpha}] = [\psi_{\alpha'}]$
if and only if $\alpha - \alpha' = dh$ with $\langle h \rangle = 0$, where
\be\label{average}
\langle h \rangle : = \int_{X} h \vol\,.
\ee
\begin{Proposition}\label{slartibartfast}
Let $X$ be a connected symplectic manifold. Then
the action of the symplectic vector fields $\smp(X)$ on $H^2(\ham(X), \R)$
is given by
\[
L_v[\psi_\alpha] = - \langle i_v \alpha \rangle [\psi_{KS}]\,,
\]
where $v$ is an element of $\mathrm{sp}(X)$ and 
$[\psi_{KS}]$ is the Kostant-Souriau class \eqref{eq:KScocycle}.
\end{Proposition}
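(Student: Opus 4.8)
The plan is to compute the Lie derivative $L_v\psi_\alpha$ at the level of cochains and to recognise the result as a Roger cocycle attached to an exact, compactly supported $1$-form, to which Proposition~\ref{hamcocycles} applies directly. Recall that for the ideal $\ham(X)\subseteq\smp(X)$, the action of $v\in\smp(X)$ on the $2$-cochain $\psi_\alpha$ that is relevant for the transgression of Section~\ref{sec:5term} is
\[
(L_v\psi_\alpha)(X_f,X_g)=\psi_\alpha([v,X_f],X_g)+\psi_\alpha(X_f,[v,X_g])\,.
\]
The first ingredient I would establish is the identity $[v,X_f]=X_{vf}$ with $vf:=i_v df$, valid for every $v\in\smp(X)$: since $L_v\omega=0$ one has $i_{[v,X_f]}\omega=L_v(i_{X_f}\omega)=-L_v df=-d(i_v df)$, which is exactly the defining equation for $X_{vf}$.

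Substituting this and the formula $\psi_\alpha(X_f,X_g)=\int_X f\,(i_{X_g}\alpha)\,\vol$ gives
\[
(L_v\psi_\alpha)(X_f,X_g)=\int_X (vf)\,(i_{X_g}\alpha)\,\vol+\int_X f\,(i_{X_{vg}}\alpha)\,\vol\,.
\]
The key manipulation is the Cartan identity $i_{X_{vg}}\alpha=i_{[v,X_g]}\alpha=L_v(i_{X_g}\alpha)-i_{X_g}(L_v\alpha)$, which splits the second integral into $\int_X f\,v(i_{X_g}\alpha)\,\vol$ and $-\int_X f\,i_{X_g}(L_v\alpha)\,\vol=-\psi_{L_v\alpha}(X_f,X_g)$. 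By the Leibniz rule the two directional-derivative terms combine into $\int_X L_v(f\cdot i_{X_g}\alpha)\,\vol$, and this vanishes: because $v$ is symplectic, $L_v\vol=0$, so $\int_X L_v(\phi)\,\vol=\int_X d\,i_v(\phi\,\vol)=0$ by Stokes, using that $\phi=f\cdot i_{X_g}\alpha$ is compactly supported since $\alpha$ is. Thus $L_v\psi_\alpha=-\psi_{L_v\alpha}$ already at the cochain level, and since $L_v$ is a chain map this descends to $L_v[\psi_\alpha]=-[\psi_{L_v\alpha}]$.

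It then remains to identify $\psi_{L_v\alpha}$. As $\alpha$ is closed, $L_v\alpha=(d\,i_v+i_v\,d)\alpha=d(i_v\alpha)$ is exact, and $h:=i_v\alpha\in\Omega^0_c(X)$ is compactly supported. Proposition~\ref{hamcocycles} now yields $[\psi_{L_v\alpha}]=[\psi_{d(i_v\alpha)}]=\langle i_v\alpha\rangle[\psi_{KS}]$, whence $L_v[\psi_\alpha]=-\langle i_v\alpha\rangle[\psi_{KS}]$, as claimed. As a consistency check one can verify that the right-hand side is independent of the representative $\alpha$ of its class in $Z^1_c(X)/d\Omega^0_{c,0}(X)$: replacing $\alpha$ by $\alpha+dh_0$ with $\langle h_0\rangle=0$ changes $i_v\alpha$ by $L_v h_0$, and $\langle L_v h_0\rangle=\int_X L_v(h_0)\,\vol=0$ by the same Stokes argument.

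The main obstacle here is bookkeeping rather than conceptual content: one must carry the sign convention for the $\smp(X)$-action on cochains consistently through the computation, since this is precisely what pins down the overall minus sign in the statement, and one must be careful that the total-derivative term genuinely integrates to zero, which relies both on the compact support of $\alpha$ and on $v$ being symplectic so that $L_v\vol=0$. Everything else reduces to the elementary identity $[v,X_f]=X_{vf}$ and the already-established relation $[\psi_{dh}]=\langle h\rangle[\psi_{KS}]$ of Proposition~\ref{hamcocycles}.
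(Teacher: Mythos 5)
Your proof is correct and follows essentially the same strategy as the paper's: compute $L_v\psi_\alpha$ at the cochain level, integrate by parts using $L_v\vol=0$ and the compact support of $\alpha$, and reduce to the relation $[\psi_{dh}]=\langle h\rangle[\psi_{KS}]$. The paper packages the last step slightly differently — it computes $L_v\psi_\alpha$ as $\pm\int_X\alpha(v)\{f,g\}\,\vol$ and then exhibits the explicit $1$-cochain $\theta_\alpha(v)(X_f)=\int_X\alpha(v)(f-f_x)\,\vol$ whose coboundary realizes the difference with $\langle\alpha(v)\rangle\psi_{KS}$, whereas you recognize $L_v\psi_\alpha$ as a Roger cocycle of the exact form $d(i_v\alpha)$ and invoke Proposition~\ref{hamcocycles}, which contains exactly that cochain argument; these are the same computation in different clothing, and yours is arguably the cleaner bookkeeping.

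The one point you should fix is the sign convention, which you yourself flag as the delicate issue. You define $(L_v\psi)(X_f,X_g)=\psi([v,X_f],X_g)+\psi(X_f,[v,X_g])$, but the paper (and the standard Chevalley--Eilenberg action on cochains with trivial coefficients, which is what feeds into the transgression of Section~\ref{sec:5term}) uses $(L_v\psi)(X_f,X_g)=-\psi(L_vX_f,X_g)-\psi(X_f,L_vX_g)$. With the paper's convention, your (correct) cochain identity reads $L_v\psi_\alpha=+\psi_{d(i_v\alpha)}$, and hence $L_v[\psi_\alpha]=+\langle i_v\alpha\rangle[\psi_{KS}]$ — the opposite overall sign. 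So your derivation reproduces the displayed formula only because your convention for $L_v$ is the negative of the paper's; you should state explicitly which convention you adopt. This has no mathematical consequence for anything downstream: Corollaries~\ref{spinv} and~\ref{spinv-non} only use the vanishing of $L_v[\psi_\alpha]$, i.e.\ the condition $\langle i_v\alpha\rangle=([\alpha],[i_v\omega])=0$, which is insensitive to the sign. Everything else in your argument — the identity $[v,X_f]=X_{i_vdf}$, the Cartan formula splitting, the Stokes argument requiring both $L_v\vol=0$ and the compact support of $\alpha$, and the well-definedness check on the class of $\alpha$ in $Z^1_c(X)/d\Omega^0_{c,0}(X)$ — is sound.
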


\begin{proof}
First we rewrite the cocycle $\ps_\al$, using \eqref{kleintje}, as
\[
\psi_{\alpha}(X_f, X_g) = \int_{X} f dg \wedge \alpha \wedge \omega^{n-1}/(n-1)!\,.
\]
It follows that $(L_{v}\psi_{\alpha})(X_f,X_g) := - \psi_{\alpha}(L_v X_f, X_g) 
- \psi_\al(X_f, L_v X_g)$
satisfies for all $v\in\smp(X)$
\begin{eqnarray*}
(L_{v}\psi_{\alpha})(X_f,X_g) &=& -\int_{X} L_v(fdg) \wedge \alpha \wedge \omega^{n-1}/(n-1)! \\
&=&  \int_{X} fdg \wedge d\big(\alpha(v) \omega^{n-1}\big)/(n-1)!\\
& = & - \int_{X} \alpha(v) df \wedge dg \wedge \omega^{n-1}/(n-1)!\,,\\
\end{eqnarray*}
so that by equation \eqref{commutatorexact}, we have
\be\label{grondeekhoorn}
(L_{v}\psi_{\alpha})(X_f,X_g) = - \int_{X} \alpha(v) \{f,g\} \omega^{n}/n!\,.
\ee
For $x\in X$ and $\alpha \in \Omega^1_{c}(X)$ closed, we define 
the continuous map $\theta_{\alpha} \colon \smp(X) \rightarrow \ham(X)'$
by
\begin{equation}\label{teta-non}
\theta_\al(v)(X_f) := \int_X\alpha(v)(f-f_x)\omega^{n}/n!,
\end{equation}
where $f - f_x$ is the unique Hamiltonian function of $X_f$ that vanishes at $x$.
Since the coboundary of $\theta_{\alpha}(v)$ is a linear combination of $L_v\psi_\alpha$ and $\psi_{KS}$, namely
\begin{eqnarray}\label{worteltaart}
\dd(\theta_\al(v))(X_f,X_g) & = & -\theta_\al(v)(X_{\{f,g\}}) \\
& = & - \int_{X}\alpha(v) (\{f,g\} - \{f,g\}_{x})\omega^n/n!\nonumber\\
& = & \langle\alpha(v)\rangle \psi_{KS}(X_f, X_g) + L_{v}\psi_{\alpha}(X_f,X_g)\,, \nonumber
\end{eqnarray}
we have $L_v[\psi_\alpha]=- \langle \alpha(v)\rangle [\psi_{KS}]$.
\end{proof}
This reveals a dichotomy between the compact and noncompact case:
if $X$ is compact, then $[\psi_{KS}]$ is zero by Corr.~\ref{corr:KSextension}, 
so every class in $\ham(X)$ is 
automatically $\smp(X)$-invariant. 

\begin{Corollary}\label{spinv}
Let $X$ be a compact connected symplectic manifold. Then 
$\smp(X)$ acts trivially on
the continuous second cohomology 
$H^2(\ham(X),\R)$. 
\end{Corollary}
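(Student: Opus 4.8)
The plan is to reduce everything to the two facts already in hand: the explicit formula for the $\smp(X)$-action on $H^2(\ham(X),\R)$ established in Proposition~\ref{slartibartfast}, and the vanishing of the Kostant--Souriau class in the compact case from Corollary~\ref{corr:KSextension}. No new computation should be needed; the work has essentially been done upstream.

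First I would recall from Theorem~\ref{H2Ham} that, since $X$ is compact and connected, every class in $H^2(\ham(X),\R)$ admits a representative of Roger type $\psi_\alpha$ with $\alpha$ a closed $1$-form; here $\Omega^1_c(X)=\Omega^1(X)$ because $X$ is compact, so the compact-support condition is vacuous. Thus it suffices to show $L_v[\psi_\alpha]=0$ for every such $\alpha$ and every $v\in\smp(X)$, since these classes exhaust $H^2(\ham(X),\R)$.

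Next I would invoke Proposition~\ref{slartibartfast}, which computes the action exactly as $L_v[\psi_\alpha]=-\langle i_v\alpha\rangle[\psi_{KS}]$, with $\langle i_v\alpha\rangle=\int_X\alpha(v)\,\omega^n/n!$. The only remaining input is that the coefficient $[\psi_{KS}]$ appearing on the right-hand side vanishes: by Corollary~\ref{corr:KSextension}, the Kostant--Souriau extension splits precisely because $X$ is compact, so $[\psi_{KS}]=0$ in $H^2(\ham(X),\R)$. Hence $L_v[\psi_\alpha]=0$ for all $v$ and all $\alpha$, and the action of $\smp(X)$ on $H^2(\ham(X),\R)$ is trivial.

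There is essentially no obstacle here; the substance was carried out in the proof of Proposition~\ref{slartibartfast}, where the transgression datum $\theta_\alpha$ and the identity~\eqref{worteltaart} are produced. The only points deserving a word of care are that the action $L_v$ is already known to descend to cohomology (so the formula of Proposition~\ref{slartibartfast} is representative-independent), and that Theorem~\ref{H2Ham} genuinely exhausts $H^2(\ham(X),\R)$ by the Roger cocycles $\psi_\alpha$, so that checking triviality on these representatives suffices to conclude triviality of the whole action.
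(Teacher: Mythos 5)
Your proposal is correct and follows exactly the paper's argument: the corollary is deduced immediately from the formula $L_v[\psi_\alpha]=-\langle i_v\alpha\rangle[\psi_{KS}]$ of Proposition~\ref{slartibartfast}, the vanishing $[\psi_{KS}]=0$ for compact $X$ from Corollary~\ref{corr:KSextension}, and the fact (Theorem~\ref{H2Ham}) that the Roger classes exhaust $H^2(\ham(X),\R)$. Nothing is missing.
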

If, on the other hand, $X$ is noncompact, then the $\smp(X)$-invariant part 
of $H^2(\ham(X), \R)$ can be quite small.
Indeed, in terms of the alternating pairing
\[
(\,\cdot\, ,\,\cdot\, ) \colon H^1_{\mathrm{dR,c}}(X)\times H^1_{\mathrm{dR}}(X)\to\R
\]
defined by
\begin{equation}\label{pair-nonII}
([\alpha], [\beta]) := \int_X \alpha \wedge \beta \wedge \omega^{n-1}/(n-1)!\,,
\end{equation}
we have the following result.
\begin{Corollary}\label{spinv-non}
Let $X$ be a noncompact connected symplectic manifold. Then the class
$[\psi_{\alpha}] \in H^2(\ham(X),\R)$ is annihilated by $\smp(X)$
if and only if $[\alpha] \in H^1_{\mathrm{dR}, c}(X)$ is in the kernel of the 
alternating 
pairing \eqref{pair-nonII}.
In particular, we have $\R [\psi_{KS}] \subseteq H^2(\ham(X),\R)^{\smp(X)}$.
\end{Corollary}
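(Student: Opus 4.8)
The plan is to read the entire corollary off the formula $L_v[\psi_\alpha] = -\langle i_v\alpha\rangle[\psi_{KS}]$ of Proposition~\ref{slartibartfast}, so that everything reduces to interpreting the scalar $\langle i_v\alpha\rangle$. First I would record that for noncompact connected $X$ the Kostant--Souriau class $[\psi_{KS}]$ is nonzero: by Corollary~\ref{corr:KSextension} the extension \eqref{eq:KSextension} is not split, and by Proposition~\ref{kaf} the class $[\psi_{KS}]$ spans the one-dimensional kernel of $H^2(\ham(X),\R)\to H^2(C^\infty(X),\R)$, so in particular it does not vanish. Consequently $L_v[\psi_\alpha]=0$ in $H^2(\ham(X),\R)$ if and only if the real number $\langle i_v\alpha\rangle$ vanishes, and hence $[\psi_\alpha]$ is $\smp(X)$-invariant precisely when $\langle i_v\alpha\rangle=0$ for every $v\in\smp(X)$.

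The next step is to recognise $\langle i_v\alpha\rangle$ as a value of the pairing \eqref{pair-nonII}. Applying \eqref{kleintje} to the symplectic vector field $v$,
\[
\langle i_v\alpha\rangle=\int_X \alpha(v)\,\vol=\int_X \alpha\wedge i_v\omega\wedge\omega^{n-1}/(n-1)!=\bigl([\alpha],[i_v\omega]\bigr),
\]
where $[i_v\omega]\in H^1_{\mathrm{dR}}(X)$ is a well-defined class because $v$ symplectic forces $i_v\omega$ to be closed. I would note here that this quantity depends only on the image of $\alpha$ in $H^1_{\mathrm{dR},c}(X)$: replacing $\alpha$ by $\alpha+d\phi$ with $\phi\in\Omega^0_c(X)$ changes the integrand by $d(\phi\, i_v\omega\wedge\omega^{n-1}/(n-1)!)$, which integrates to zero by Stokes since $i_v\omega$ and $\omega$ are closed and $\phi$ is compactly supported. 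This is exactly the well-definedness of \eqref{pair-nonII} on cohomology, so no separate verification is required.

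Finally I would invoke the surjectivity of the quotient map $\smp(X)\to \smp(X)/\ham(X)\simeq H^1_{\mathrm{dR}}(X)$, $v\mapsto[i_v\omega]$: every closed $1$-form equals $i_v\omega$ for a unique $v$, which is automatically symplectic, so as $v$ ranges over $\smp(X)$ the class $[i_v\omega]$ attains all of $H^1_{\mathrm{dR}}(X)$. Therefore $\langle i_v\alpha\rangle=0$ for all $v$ if and only if $\bigl([\alpha],[\beta]\bigr)=0$ for all $[\beta]\in H^1_{\mathrm{dR}}(X)$, i.e.\ $[\alpha]$ lies in the kernel of \eqref{pair-nonII}; this is the asserted equivalence. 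For the last statement, Proposition~\ref{hamcocycles} shows that $[\psi_{KS}]$ is represented by $\psi_\alpha$ with $\alpha=dh$, $h\in\Omega^0_c(X)$ and $\langle h\rangle=1$; such an $\alpha$ is exact, so its class in $H^1_{\mathrm{dR},c}(X)$ is zero and thus automatically lies in the kernel of the pairing, giving $\R[\psi_{KS}]\subseteq H^2(\ham(X),\R)^{\smp(X)}$.

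I do not expect a genuine obstacle, since the analytic content is already carried by Proposition~\ref{slartibartfast}. The only two points demanding care are confirming $[\psi_{KS}]\neq0$ in the noncompact case, so that the coefficient $\langle i_v\alpha\rangle$ can legitimately be isolated from $L_v[\psi_\alpha]$, and checking that $v\mapsto[i_v\omega]$ hits \emph{all} of $H^1_{\mathrm{dR}}(X)$, so that the vanishing condition transfers to the full kernel of the pairing rather than to some proper subspace.
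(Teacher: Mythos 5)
Your proposal is correct and follows essentially the same route as the paper's proof: non-vanishing of $[\psi_{KS}]$ via Corollary~\ref{corr:KSextension}, Proposition~\ref{slartibartfast} to reduce invariance to $\langle\alpha(v)\rangle=0$, the identity $\langle\alpha(v)\rangle=([\alpha],[i_v\omega])$ from \eqref{kleintje} together with surjectivity of $v\mapsto[i_v\omega]$, and the representation $[\psi_{KS}]=[\psi_{dh}]$ with exact $dh$ for the final inclusion. The extra checks you include (well-definedness of the pairing on cohomology, citing Proposition~\ref{kaf}) are harmless elaborations of what the paper leaves implicit.
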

\begin{proof}
Since $[\psi_{KS}]$ is nonzero 
by Corollary~\ref{corr:KSextension},
Proposition~\ref{slartibartfast} shows that
$[\psi_{\alpha}]$ is 
annihilated by $\smp(X)$ if and only if  
$\langle \alpha(v) \rangle$ vanishes for all $v\in \smp(X)$.
Because $\langle \alpha(v) \rangle = ([\alpha], [i_v\omega])$ by
\eqref{kleintje} and $v \mapsto [i_v\omega]$ 
is surjective onto $H^1_{\mathrm{dR}}(X)$, 
we have that $\langle \alpha(v) \rangle = 0$ for all $v \in \mathrm{sp}(X)$ if and only if 
$[\alpha]$ is in the kernel of the alternating pairing \eqref{pair-nonII}.

Since $[\psi_{KS}]$ is equal to $[\psi_{dh}]$ for $h\in \Omega^0_{c}(X)$ with $\langle h \rangle = 1$,
$[\alpha] = [dh] = 0$ is in the kernel, so $[\psi_{KS}]$ is always $\smp(X)$-invariant. 
\end{proof}

\begin{Remark}
By definition, a symplectic manifold is Lefschetz if the map
\[H^1_{\mathrm{dR}}(X) \rightarrow H^{2n-1}_{\mathrm{dR}}(X) \colon [\alpha] \mapsto [\omega^{n-1} \wedge \alpha]\] is an isomorphism.
Since the intersection pairing is then nondegenerate, 
it follows that for a 
noncompact Lefschetz manifold $X$,
the only invariant class is $[\psi_{KS}]$,
\[ H^2(\ham(X),\R)^{\smp(X)}=\R [\psi_{KS}] \,.\]
\end{Remark}

In view of the different character of $H^2(\ham(X), \R)$ for compact and noncompact 
manifolds, we will treat these two cases separately.

\subsection{Transgression for compact manifolds}\label{sec:ncptsymp}

We first determine the continuous second Lie algebra cohomology of $\mathrm{sp}(X)$
in the case that $X$ is a \emph{compact} connected manifold, and return to 
noncompact manifolds in the next section.
In view of eqn.~\eqref{kert}, Cor.~\ref{spinv}, and Thm.~\ref{H2Ham}, 
this amounts to determining the
transgression map 
$T \colon H^1_{\mathrm{dR}}(X) \rightarrow {\textstyle \bigwedge^3}H^1_{\mathrm{dR}}(X)^*$.
This was done in \cite{Vizman2006}, using the characteristic class 
one obtains from the Weil homomorphism applied 
to an invariant bilinear form on $\ham(X)$.

We now give 
a direct construction of the transgression map, which  
avoids the use of characteristic classes, and therefore generalizes more easily to 
noncompact manifolds, where an invariant bilinear form is not available.
 

\begin{Proposition}\label{Tcompact}
For $X$ a compact connected symplectic manifold, the  transgression map
in the five term exact sequence \eqref{5term}
 is given by
\begin{gather}
T \colon H^1_{\mathrm{dR}}(X) \rightarrow {\textstyle \bigwedge^3}H^1_{\mathrm{dR}}(X)^*\nonumber
\\T(a)(b_1,b_2,b_3) =
(a,b_1,b_2,b_3)
- \frac{1}{\mathrm{vol}(X)} \Big(\sum_{\mathrm{cycl}}(a,b_1)(b_2, b_3)\Big) \label{final}
\end{gather}
with the alternating 4-form on $H^1_{\mathrm{dR}}(X)$ defined by
\begin{equation}\label{eq:alt4form}
([\al],[\beta_1],[\beta_2],[\beta_3])=\int_X \al \wedge\beta_1\wedge\beta_2\wedge\beta_3 \wedge\om^{n-2}/(n-2)!
\end{equation}
and the skew symmetric 2-form $(\,\cdot\,,\,\cdot\,)$ on $H^1_{\mathrm{dR}}(X)$ as in eqn.~\eqref{pair-nonII}.
\end{Proposition}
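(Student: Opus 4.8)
The plan is to evaluate the transgression $T$ in \eqref{5term} directly from its definition in Section~\ref{sec:5term}, specialized to $\fh=\ham(X)$ and $\fg=\smp(X)$. Fix a closed $\alpha\in\Omega^1(X)$ representing $a:=[\alpha]\in H^1_{\mathrm{dR}}(X)$, so that $[\psi_\alpha]$ exhausts $H^2(\ham(X),\R)\simeq H^1_{\mathrm{dR}}(X)$ (Thm.~\ref{H2Ham}); by Cor.~\ref{spinv} every such class is $\smp(X)$-invariant, so $T$ is defined on all of $H^1_{\mathrm{dR}}(X)$. For $v\in\smp(X)$ write $b:=-[i_v\omega]$ for its image under $p\colon\smp(X)\to\smp(X)/\ham(X)\simeq H^1_{\mathrm{dR}}(X)$. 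The key simplification is that in $\dd\psi'(v_1,v_2,v_3)$ every bracket $[v_i,v_j]=X_{\omega(v_i,v_j)}$ already lies in $\ham(X)$, so the value depends only on $\theta$ and not on the chosen skew extension $\psi'$. Expanding with the differential of Def.~\ref{Cohomology} and using $\psi'(h,v)=-\theta_v(h)$ gives
\[
\dd\psi'(v_1,v_2,v_3)=\theta_{v_3}([v_1,v_2])-\theta_{v_2}([v_1,v_3])+\theta_{v_1}([v_2,v_3])\,.
\]

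Next I would determine $\theta_v$ explicitly. From \eqref{worteltaart} in the proof of Prop.~\ref{slartibartfast}, the cochain $\theta_\alpha(v)$ of \eqref{teta-non} satisfies $\dd(\theta_\alpha(v))=\langle\alpha(v)\rangle\psi_{KS}+L_v\psi_\alpha$. Because $X$ is compact, the Kostant--Souriau extension splits (Cor.~\ref{corr:KSextension}): $\psi_{KS}=\dd\lambda_{KS}$ for the $1$-cochain $\lambda_{KS}(X_f)=\langle\lambda_X\rangle(f)-f(x)$ on $\ham(X)$. Hence $\theta_v:=\theta_\alpha(v)-\langle\alpha(v)\rangle\lambda_{KS}$ obeys $\dd\theta_v=L_v\psi_\alpha$, and since $\ham(X)$ is perfect (Cor.~\ref{hamp}) it is the \emph{unique} such cochain. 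A brief simplification cancels the point $x$ and gives the $x$-independent formula
\[
\theta_v(X_f)=\int_X \alpha(v)\,\bigl(f-\langle\lambda_X\rangle(f)\bigr)\,\vol\,,
\]
that is, $\theta_\alpha(v)$ evaluated on the mean-zero Hamiltonian of $X_f$.

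Finally I would substitute $f=\omega(v_i,v_j)$ into this expression and split $\dd\psi'(v_1,v_2,v_3)$ into two groups. The ``integral'' terms assemble into $\int_X\bigl(\sum_{\mathrm{cycl}}\alpha(v_i)\omega(v_j,v_k)\bigr)\vol$, which by \eqref{forma} equals $\int_X\alpha\wedge i_{v_1}\omega\wedge i_{v_2}\omega\wedge i_{v_3}\omega\wedge\omega^{n-2}/(n-2)!$; via the $4$-form \eqref{eq:alt4form} together with $[i_{v_i}\omega]=-b_i$, this contributes $\pm(a,b_1,b_2,b_3)$. The ``mean-value'' terms use $\langle\alpha(v_k)\rangle=-(a,b_k)$ from \eqref{kleintje} and $\langle\lambda_X\rangle(\omega(v_i,v_j))=\frac{1}{\mathrm{vol}(X)}(b_i,b_j)$ from \eqref{zoethout} and \eqref{pair-nonII}, and collapse into $\mp\frac{1}{\mathrm{vol}(X)}\sum_{\mathrm{cycl}}(a,b_1)(b_2,b_3)$; summing the two reproduces \eqref{final}. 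The main obstacle is purely the sign bookkeeping: one must keep consistent the sign in the differential of Def.~\ref{Cohomology}, the Lie-derivative action $L_v$, the skew extension $\psi'(h,v)=-\theta_v(h)$, and the identification $p(v)=-[i_v\omega]$, and verify that the two cyclically signed combinations genuinely coincide with the cyclic sums of \eqref{forma} and \eqref{final}, so that the overall sign lands on \eqref{final} rather than its negative. All the analytic content, namely well-definedness and continuity of $T$ and of $v\mapsto\theta_v$ and independence of the extension $\psi'$, is already supplied by \cite{Vizman2006} and requires no new argument.
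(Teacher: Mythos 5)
Your proposal is correct and follows essentially the same route as the paper: one computes the unique primitive $\theta_v$ of $L_v\psi_\alpha$ (your $\theta_\alpha(v)-\langle\alpha(v)\rangle\lambda_{KS}$ coincides with the paper's $\Theta_\alpha(v)(X_f)=\int_X\alpha(v)\bigl(f-\tfrac{1}{\mathrm{vol}(X)}\langle f\rangle\bigr)\vol$), extends it to a $2$-cochain $\psi'$ on $\smp(X)$, and evaluates $\dd\psi'(v_1,v_2,v_3)=\sum_{\mathrm{cycl}}\Theta_\alpha(v_1)([v_2,v_3])$ using $[v_2,v_3]=X_{\omega(v_2,v_3)}$ and Lemma~\ref{lem:4termcyc} to produce the two terms of \eqref{final}. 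The only cosmetic difference is that the paper writes down $\Theta_\alpha(v)$ directly via the identification $\ham(X)\simeq C^\oo_{c,0}(X)$ and eqn.~\eqref{grondeekhoorn}, whereas you derive the same cochain by correcting \eqref{teta-non} with the Kostant--Souriau splitting.
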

Note that one can always rescale $\omega$ so that $\mathrm{vol}(X)=1$, which was implicitly done in the derivation of
\cite[eqn.~(8)]{Vizman2006}.

\begin{proof}
By Corollary \ref{spinv}, we have 
\[H^2(\ham(X),\R)^{\mathrm{sp}(X)}=H^2(\ham(X),\R),\] and by 
Theorem \ref{H2Ham}, $H^2(\ham(X),\R)$ can be identified with $H^1_{\mathrm{dR}}(X)$ 
by mapping $[\alpha] \in H^1_{\mathrm{dR}}(X)$ to the class $[\psi_{\alpha}] \in H^2(\ham(X),\R)$
of the cocycle $\psi_{\alpha}$ described in eqn.~\eqref{voorham}.
The quotient $\mathrm{sp}(X)/\ham(X)=H^1_{\mathrm{dR}}(X)$ is an abelian Lie algebra, so
the transgression is a map 
$T\colon H^1_{\mathrm{dR}}(X)\to{\textstyle \bigwedge^3} H^1_{\mathrm{dR}}(X)^*$.

Since $X$ is compact, 
we can identify $\ham(X)$ with the Lie algebra $C^\oo_{c,0}(X)$ of zero integral
functions by mapping $X_f$ to its unique zero integral Hamilton function 
$f - \frac{1}{\mathrm{vol}(X)}\langle f \rangle$. 
Because the Hamiltonian function $\{f,g\}$ of $[X_f,X_g]$ has zero integral, we see from
eqn.~\eqref{grondeekhoorn} that
$L_{v}\psi_{\alpha} = \dd (\Theta_{\alpha}(v))$, with the 1-cochain
$\Theta_\alpha:\mathrm{sp}(X)\to\ham(X)'$
given by
$$
\Th_\al(v)(X_f)=\int_X\alpha(v)\left(f - {\textstyle \frac{1}{\mathrm{vol}(X)}}\langle f\rangle\right)\omega^{n}/n!,
$$
where $\langle f \rangle$ is given by \eqref{average} and $\mathrm{vol}(X) = \langle 1 \rangle$
is the symplectic volume of $X$.
To compute the transgression map, we proceed as outlined in Section~\ref{sec:5term}. 
We choose a 2-cochain $\psi'_\al$ 
on $\mathrm{sp}(X)$ that extends $\Theta_\al \colon \smp(X) \times \ham(X) \rightarrow \R$, and
compute its differential. 
Using Lemma~\ref{lem:4termcyc} and 
the fact that the Lie bracket of two symplectic vector fields
$v_1$ and $v_2$ is Hamiltonian with 
Hamilton function 
$\om(v_1,v_2)$,
we compute the differential of $\psi'_\al$ as
\begin{eqnarray*}
(\dd\psi'_\al)(v_1,v_2,v_3) &=& \sum_{cycl}\psi'_\al(v_1,[v_2,v_3])
= \sum_{\mathrm{cycl}}\Theta_\al(v_1)([v_2,v_3])\\
&=& \sum_{\mathrm{cycl}}\int_X\al(v_1)\left(\om(v_2,v_3)- 
{\textstyle \frac{1}{\mathrm{vol}(X)} } \langle \om(v_2,v_3)\rangle \right) \omega^n/n!\\
&=& \int_X
\alpha\wedge i_{v_1}\om\wedge  i_{v_2}\om\wedge  i_{v_3}\om
\wedge\om^{n-2}/(n-2)!\\
& & - \frac{1}{\mathrm{vol}(X)} \sum_{\mathrm{cycl}}
\langle \alpha(v_1) \rangle
\langle \omega(v_2,v_3)\rangle \\
%
%
& = & 
([\al],[i_{v_1}\om], [i_{v_2}\om], [i_{v_3}\om])\\
& &-\frac{1}{\mathrm{vol}(X)} \sum_{\mathrm{cycl}} ([\al],[i_{v_1}\omega])
([i_{v_2}\om],[i_{v_3}\om])\,. \\
\end{eqnarray*}
With the expression $T([\psi_\al]) := [\ol{\dd \psi'_\al}]$ of the transgression map, the conclusion follows.
\end{proof}

Knowing  this explicit expression for the
transgression map, equation \eqref{kert} yields 
the following complete description for 
$H^2(\mathrm{sp}(X),\R)$.

\begin{Theorem}\label{thm:spcpt}
Let $X$ be a compact connected symplectic manifold.
Then we have an isomorphism
\[
H^2(\mathrm{sp}(X),\RR) \simeq {\textstyle \bigwedge^2}(H^1_{\mathrm{dR}}(X)^*)
\oplus \mathrm{Ker}(T),
\]
where $\mathrm{Ker}(T) \subseteq H^1_{\mathrm{dR}}(X)$
is the set of classes $a\in H^1_{\mathrm{dR}}(X)$ such that
\[
(a,b_1,b_2,b_3) = 
\frac{1}{\mathrm{vol}(X)}
\sum_{\mathrm{cycl}}(a,b_1)(b_2,b_3)
\]
for all $b_1, b_2, b_3\in H^1_{\mathrm{dR}}(X)$.
\end{Theorem}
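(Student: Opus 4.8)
The plan is to read off the claimed decomposition directly from the five-term exact sequence \eqref{5term}, applied to the ideal $\fh = \ham(X)$ inside $\fg = \smp(X)$, once every term appearing in that sequence has been identified. All the genuinely substantial work has been done in the preceding results, so at this stage the proof is essentially a matter of assembly and bookkeeping. The one structural fact I would check first is that $\ham(X)$ is a perfect, closed, complemented ideal of $\smp(X)$ with abelian quotient $\smp(X)/\ham(X) \simeq H^1_{\mathrm{dR}}(X)$, so that \eqref{5term} is available.

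I would then invoke \eqref{kert}, which already records that the five-term sequence yields
\[
H^2(\smp(X),\R) \simeq {\textstyle\bigwedge^2}H^1_{\mathrm{dR}}(X)^* \oplus \mathrm{Ker}(T).
\]
Here the $\bigwedge^2 H^1_{\mathrm{dR}}(X)^*$ summand is the image of $p^* \colon H^2(\smp(X)/\ham(X),\R)\to H^2(\smp(X),\R)$, using that the abelian quotient $\smp(X)/\ham(X) \simeq H^1_{\mathrm{dR}}(X)$ has second cohomology equal to $\bigwedge^2$ of its dual (finite dimensional because $X$ is compact), while the complementary summand is $\mathrm{Im}(\iota^*) = \mathrm{Ker}(T)$ by exactness at the middle term. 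The decomposition is a genuine direct sum because $p^*$ is injective and all the spaces involved are finite dimensional, so the resulting short exact sequence splits.

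Next I would pin down the target of $\iota^*$, namely $H^2(\ham(X),\R)^{\smp(X)}$. By Corollary~\ref{spinv}, for compact connected $X$ the $\smp(X)$-action on $H^2(\ham(X),\R)$ is trivial, so $H^2(\ham(X),\R)^{\smp(X)} = H^2(\ham(X),\R)$, and Theorem~\ref{H2Ham} identifies this with $H^1_{\mathrm{dR}}(X)$ via $[\alpha]\mapsto[\psi_\alpha]$. Under this identification the transgression becomes the explicit map of Proposition~\ref{Tcompact},
\[
T(a)(b_1,b_2,b_3) = (a,b_1,b_2,b_3) - \frac{1}{\mathrm{vol}(X)}\sum_{\mathrm{cycl}}(a,b_1)(b_2,b_3),
\]
so that $\mathrm{Ker}(T)$ is precisely the set of $a \in H^1_{\mathrm{dR}}(X)$ satisfying $(a,b_1,b_2,b_3) = \frac{1}{\mathrm{vol}(X)}\sum_{\mathrm{cycl}}(a,b_1)(b_2,b_3)$ for all $b_1,b_2,b_3$. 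Substituting this description into the displayed isomorphism gives exactly the statement of the theorem.

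Since each truly nontrivial ingredient — the five-term sequence \eqref{5term}, the triviality of the $\smp(X)$-action (Cor.~\ref{spinv}), the identification $H^2(\ham(X),\R)\simeq H^1_{\mathrm{dR}}(X)$ (Thm.~\ref{H2Ham}), and above all the explicit evaluation of the transgression (Prop.~\ref{Tcompact}) — has already been established, I expect no serious obstacle here; the proof is in effect a corollary of the accumulated machinery. The only points requiring a word of care are the verification that \eqref{5term} genuinely applies (the ideal hypotheses above) and the observation that finite dimensionality of $H^1_{\mathrm{dR}}(X)$ for compact $X$ legitimises both the $\bigwedge^2(\,\cdot\,)^*$ computation of $H^2$ of the abelian quotient and the splitting of the relevant short exact sequence.
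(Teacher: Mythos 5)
Your proposal is correct and follows essentially the same route as the paper, which likewise obtains Theorem~\ref{thm:spcpt} by combining the five-term sequence \eqref{kert} with Corollary~\ref{spinv}, Theorem~\ref{H2Ham}, and the explicit transgression of Proposition~\ref{Tcompact}. The assembly and the points of care you flag (the ideal hypotheses for \eqref{5term} and finite dimensionality of $H^1_{\mathrm{dR}}(X)$) match the paper's treatment.
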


We give a few examples to illustrate that, in concrete situations, the kernel 
of the transgression can often be determined explicitly. 

\begin{Example}[Tori]
For the symplectic $2n$-torus $\mathbb{T}^{2n} = \R^{2n}/\Z^{2n}$, 
the transgression map vanishes, because 
$\smp(\mathbb{T}^{2n}) \simeq \ham(\mathbb{T}^{2n}) \rtimes \R^{2n}$ 
is a semidirect product, cf.\ \cite{Vizman2006}. 
Therefore, all Roger cocycles extend to $\smp(\mathbb{T}^{2n})$, 
and $H^2(\smp(\mathbb{T}^{2n}),\R) \simeq \bigwedge^2(\R^{2n}) \oplus \R^{2n}$.
\end{Example}

\begin{Example}[Compact surfaces]
For a surface $\Sigma_{g}$ of genus $g \ge 2$, the transgression map is injective;  
although the 4-form \eqref{eq:alt4form} is zero, the cyclic sum involving the nondegenerate 2-form \eqref{pair-nonII} 
does not vanish, cf.\ \cite{Vizman2006}.
Therefore no Roger cocycle can be extended to $\smp(\Sigma_{g})$,  
and $H^2(\smp(\Sigma_{g}),\R) \simeq \bigwedge^2 \R^{2g}$. 
For genus $0$, we have $H^2(\smp(S^2),\R) = \{0\}$, 
and for genus $1$, we have $H^2(\smp(\mathbb{T}^2),\R) \simeq \bigwedge^2 (\R^2) \oplus \R^{2}$ by the previous remark.  
\end{Example}

\begin{Remark}[Compact manifolds with $b_1 < 4$] \label{Rk:smallbetti}
Since the transgression \eqref{final} 
induces an \emph{alternating} map $\bigwedge^{4}H^1_{\mathrm{dR}}(X) \rightarrow \R$, it vanishes
for dimensional reasons if the first Betti number satisfies $b_1 <4$. 
In this case, all Roger cocycles extend to $\smp(X)$, and $H^2(\smp(X),\R) \simeq \bigwedge^2(\R^{b_1}) \oplus \R^{b_1}$.
\end{Remark}

\begin{Example}\label{Thurston}
\emph{Thurston's manifold} $X^4$ 
was historically the first example of a
compact, symplectic manifold
which is not K\"ahler \cite{Thurston1976}. 
It is a \emph{nilmanifold}, i.e.,  a quotient $X = \Gamma \backslash N$
of a 1-connected, nilpotent Lie group $N$ by a discrete, cocompact subgroup $\Gamma$.
For Thurston's manifold,
the nilpotent Lie group is $N = H \times \R$, where $H$ is the \emph{Heisenberg group}.
Its Lie algebra is $\n = \fh \times \R$, where 
$\mathfrak{h} = \mathrm{Span}\left<x,p,h\right>$
is the \emph{Heisenberg Lie algebra} with 
$[x,p] = h$ and $[h,x] = [h,p] = 0$, and $\R$ is central in $\n$ with generator $z$.
We denote by $x^*, p^*, h^*, z^*$ the dual basis of $x, p, h, z$. 
Under the inclusion 
$\iota \colon \bigwedge \n^* \hookrightarrow \Omega(X^4)$ as left invariant forms,
the symplectic form is the image of
\[\omega = h^* \wedge x^* +   z^* \wedge p^*\,,\]
normalised so that the symplectic volume $\mathrm{vol}(X^4)$ is 1.
By a theorem of Nomizu \cite{No54}, the inclusion $\iota \colon \bigwedge \n^* \hookrightarrow \Omega(X^4)$
yields an isomorphism 
between the Lie algebra cohomology $H^{\bullet}(\n,\R)$  and the de Rham cohomology 
$H^{\bullet}_{\rm dR}(X^4)$.
The Lie algebra differential $\dd \colon \bigwedge \n^* \rightarrow \bigwedge \n^*$ is the derivation with   
$\dd h^* = - x^* \wedge p^*$ and $\dd x^* = \dd p^* = \dd z^* = 0$. From this, one finds that
 the cohomology 
$H^{\bullet}(\n,\R)$ is generated (as a ring) by $[x^*]$, $[p^*]$ and $[z^*]$ in degree 1 
and $[h^* \wedge p^*]$, $[h^* \wedge x^*]$ in degree 2, with the single relation 
$[x^* \wedge p^*] = 0$.
The fact that
\begin{equation}\label{HoneThurston}
H_{\dR}^1(X^4) =  \mathrm{Span} \left<[x^*],[p^*], [z^*]\right>
\end{equation}
is 3-dimensional led Thurston to his conclusion that $X^4$ does not admit a K\"ahler structure.
By Remark \ref{Rk:smallbetti}, it also implies that every Roger cocycle extends to $\smp(X^4)$, and 
$H^2(\smp(X^4),\R) \simeq \bigwedge^2 (\R^3)  \oplus \R^3$.
\end{Example}

\subsection{Transgression for noncompact manifolds}
We now determine the continuous second Lie algebra cohomology of $\mathrm{sp}(X)$
in the case that $X$ is a \emph{noncompact} connected manifold.
As in the previous section, this amounts to
determining the
transgression map 
\[T \colon H^2(\ham(X),\R)^{\mathrm{sp}(X)} \rightarrow 
{\textstyle \bigwedge^3}H^1_{\mathrm{dR}}(X)^*,\]
but in contrast to the compact case, not every class 
$[\psi_\alpha] \in H^2(\ham(X), \R)$
is $\smp(X)$-invariant.
By Cor.~\ref{spinv-non}, $[\psi_\alpha]$
is $\mathrm{sp}(X)$-invariant if and only if $[\al]\in H^1_{\mathrm{dR}, c}(X)$ 
lies in the kernel
of the alternating pairing \eqref{pair-nonII}, and an 
$\mathrm{sp}(X)$-invariant class $[\psi_\alpha]$
extends from $\ham(X)$ to $\mathrm{sp}(X)$ if and only if it
lies in the kernel of the transgression map.

\begin{Proposition}\label{Tnoncompact}
The transgression map 
\[
T\colon H^2(\ham(X),\R)^{\mathrm{sp}(X)}\to\Lambda^3 H^1_{\mathrm{dR}}(X)^*
\]
is given by
\be \label{finalnon}
T([\psi_\al])(b_1,b_2,b_3)=([\al],b_1,b_2,b_3)
\ee
for the 4-linear map on 
$H^1_{\mathrm{dR,c}}(X) \times H^1_{\mathrm{dR}}(X)^3  $ defined by
\be\label{4linmap}
(a,b_1,b_2,b_3)= \int_X  a \wedge b_1\wedge b_2\wedge b_3
\wedge\om^{n-2}/(n-2)!\,.
\ee
\end{Proposition}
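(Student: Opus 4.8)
The plan is to run the transgression recipe of Section~\ref{sec:5term} with $\fh=\ham(X)$ and $\fg=\smp(X)$, following the compact computation of Proposition~\ref{Tcompact} almost verbatim, the only genuine novelty being the identification of the correct cochain $\theta_v$ on the \emph{invariant} subspace. Fix a closed $\al\in\Omega^1_c(X)$ whose class $[\psi_\al]$ is $\smp(X)$-invariant; by Corollary~\ref{spinv-non} this is exactly the condition $\langle i_v\al\rangle=0$ for all $v\in\smp(X)$. For such $\al$ I would set
\[
\theta_\al(v)(X_f):=\int_X \al(v)\,f\,\vol\,.
\]
The crucial observation, and the reason the domain of $T$ is precisely the invariant subspace, is that this is well defined as a functional on $\ham(X)$: since $X$ is connected the Hamilton function $f$ of $X_f$ is determined only up to a constant, and replacing $f$ by $f+c$ changes the integral by $c\langle\al(v)\rangle$, which vanishes by $\smp(X)$-invariance. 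Compact support of $\al$ makes $\theta_\al(v)$ continuous.

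Next I would verify that $\theta_\al(v)$ is the cochain required by the recipe, i.e. that $\dd\theta_\al(v)=L_v\psi_\al$. This is immediate from
\[
\dd\theta_\al(v)(X_f,X_g)=-\theta_\al(v)(X_{\{f,g\}})=-\int_X\al(v)\{f,g\}\vol\,,
\]
which equals $L_v\psi_\al(X_f,X_g)$ by equation~\eqref{grondeekhoorn}. Uniqueness of $\theta_v$ is automatic because $H^1(\ham(X),\R)=0$ (Corollary~\ref{hamp}): any two choices differ by a character of $\ham(X)$, hence agree.

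I would then invoke the existence of a continuous skew-symmetric extension $\psi'_\al$ of $\theta_\al$ to $\smp(X)\times\smp(X)$, guaranteed by \cite[Lemma~2.1]{Vizman2006} since $\ham(X)$ is a closed, complemented ideal, and compute $\dd\psi'_\al$ on a triple $v_1,v_2,v_3\in\smp(X)$. Every bracket $[v_i,v_j]$ is Hamiltonian with Hamilton function $\om(v_i,v_j)$, so all three terms of $\dd\psi'_\al(v_1,v_2,v_3)$ reduce to evaluations of $\theta_\al$ and the chosen extension drops out, leaving
\[
(\dd\psi'_\al)(v_1,v_2,v_3)=\sum_{\mathrm{cycl}}\int_X \al(v_1)\,\om(v_2,v_3)\,\vol\,.
\]
Applying equation~\eqref{forma} of Lemma~\ref{lem:4termcyc} converts the right-hand side into $\int_X \al\wedge i_{v_1}\om\wedge i_{v_2}\om\wedge i_{v_3}\om\wedge\om^{n-2}/(n-2)!$. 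To finish I would check that this descends to $\smp(X)/\ham(X)\simeq H^1_{\mathrm{dR}}(X)$: if some $v_i=X_{h_i}$ is Hamiltonian then $i_{v_i}\om=-dh_i$ is exact, and since $\al$ is closed and compactly supported the integrand becomes exact with compact support, hence integrates to zero by Stokes; the same argument shows the value depends only on the classes $[i_{v_j}\om]$. Reading off the induced $3$-cocycle on $H^1_{\mathrm{dR}}(X)$ and matching the sign conventions of Proposition~\ref{Tcompact} yields $T([\psi_\al])(b_1,b_2,b_3)=([\al],b_1,b_2,b_3)$ with the $4$-linear form~\eqref{4linmap}.

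The main subtlety is the first step: recognising that the naive cochain $\int_X\al(v)f\vol$ is well defined on $\ham(X)$ \emph{only} after $\smp(X)$-invariance is imposed, which is what pins down the domain of $T$. After that the argument is a sign-careful replay of the compact case, with the pleasant simplification that the cyclic-square term of~\eqref{final} is absent here: that term carries a factor $\langle\al(v_i)\rangle$, which vanishes identically on invariant classes, so no $\tfrac{1}{\mathrm{vol}(X)}$ normalisation enters and only the $4$-form~\eqref{4linmap} survives.
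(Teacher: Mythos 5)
Your proposal is correct and follows essentially the same route as the paper: the paper runs the identical transgression recipe, using the cochain $\theta_\al(v)(X_f)=\int_X\al(v)(f-f_x)\,\omega^n/n!$ from \eqref{teta-non}, which coincides with your $\int_X\al(v)f\,\omega^n/n!$ on the invariant subspace precisely because $\langle\al(v)\rangle=0$, and then evaluates $\dd\psi'_\al$ on brackets of symplectic fields and applies \eqref{forma} exactly as you do. Your explicit check that the result descends to $\smp(X)/\ham(X)$ is a harmless addition that the paper leaves to the general theory of the five-term sequence.
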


\begin{proof}
We proceed along the lines of Prop.~\ref{Tcompact}.
By Corollary \ref{spinv-non}, the class $[\psi_{\alpha}] \in H^2(\ham(X),\R)$
is annihilated by $\smp(X)$ if and only if $[\al] \in H^1_{\mathrm{dR},c}(X)$
lies in the kernel
of the alternating pairing \eqref{pair-nonII}. 
As $\langle \alpha(v)\rangle$ can be expressed in terms of the pairing as 
$([\alpha],[i_{v}\omega])$, this is equivalent to the requirement that   
$\langle \alpha(v)\rangle = 0$ for all $v\in \smp(X)$.
By eqn.~\eqref{worteltaart}, we thus have
$L_v\psi_\al= \dd(\th_\al(v))$ for all $v\in\mathrm{sp}(X)$,
where the 1-cochain $\th_\al:\mathrm{sp}(X)\to\ham(X)'$ is defined by  \eqref{teta-non}.
The differential of any 2-cochain 
$\psi'_\al \colon \mathrm{sp}(X) \times \mathrm{sp}(X)\rightarrow \R$ 
that extends $\th_\al \colon \smp(X) \times \ham(X)\rightarrow \R$ is
\begin{align*}
(\dd\psi'_\al)(v_1,v_2,v_3)&=\sum_{\mathrm{cycl}}\psi'_\al(v_1,[v_2,v_3])
=\sum_{\mathrm{cycl}}\theta_\al(v_1)([v_2,v_3])\\
&=\sum_{\mathrm{cycl}}\int_X\al(v_1)\left(\om(v_2,v_3)-
\om(v_2,v_3)_x\right)\om^{n}/n!\\
&=\sum_{\mathrm{cycl}}\int_X\al(v_1)\om(v_2,v_3)\om^{n}/n! \\
&\stackrel{\eqref{forma}}{=}
\int_X
\alpha \wedge i_{v_1}\om\wedge  i_{v_2}\om\wedge  i_{v_3}\om
\wedge\om^{n-2}/(n-2)!.
\end{align*}
In the third step, 
we use that the commutator $[v_1,v_2]$ of symplectic vector fields 
is Hamiltonian 
with Hamiltonian function 
$\om(v_1,v_2)$, while in the fourth step, we use that $\langle\al(v)\rangle=\int_{X}\alpha(v)\omega^{n}/n!$
is zero by $\smp(X)$-invariance of $[\psi_{\alpha}]$.
The required expression of the transgression map follows.
\end{proof}

If we consider the bilinear map 
$(\,\cdot\,,\,\cdot\,)$ of eqn.~\eqref{pair-nonII} as a linear map
\[B \colon H^1_{\mathrm{dR},c}(X) 
\rightarrow H^1_{\mathrm{dR}}(X)^*\,,\] 
and the 4-linear map 
$(\,\cdot\,,\,\cdot\,,\,\cdot\,,\,\cdot\,)$ as minus the transgression map
\[T \colon H^1_{\mathrm{dR},c}(X) 
\rightarrow {\textstyle \bigwedge^3}H^1_{\mathrm{dR}}(X)^*,\] 
then $[\psi_{\alpha}]$ extends to $\smp(X)$ if and only if 
$[\alpha] \in H^1_{\mathrm{dR},c}(X)$ lies in
$\mathrm{Ker}(B)\cap\mathrm{Ker}(T)$.
Since the Kostant-Souriau class $[\psi_{KS}]$
can be described as $[\psi_{\alpha}]$ with $\alpha = dh$ and
$\langle h \rangle = 1$, it always extends to a class  on $\smp(X)$.
Indeed, an extension is given by the cocycle
\[
\psi'_{KS}(v,w)=\omega(v,w)_x.
\] 
This yields the desired explicit 
description of the second continuous Lie algebra cohomology
of $\smp(X)$. 

\begin{Theorem}\label{thm:spncpt}
Let $X$ be a noncompact connected symplectic manifold. Then we have an isomorphism
\[H^2(\smp(X),\R) \simeq {\textstyle \bigwedge^{2}}(H^1_{\mathrm{dR}}(X)^*)
\oplus \R [\psi'_{KS}]
\oplus (\mathrm{Ker}(B) \cap \mathrm{Ker}(T))\,,
\]
where 
$\mathrm{Ker}(B) \cap \mathrm{Ker}(T)
\subseteq H^1_{\mathrm{dR},c}(X)$ 
is the set of classes $a$ such that
\begin{eqnarray*}
(a,b) &=& 0\,,\\
(a,b_1,b_2,b_3) &=& 0
\end{eqnarray*}
for all $b$ and $b_1,b_2,b_3$ in $H^1_{\mathrm{dR}}(X)$.
\end{Theorem}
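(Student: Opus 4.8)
The plan is to assemble the ingredients already established---the five-term exact sequence \eqref{5term}, the description of the invariant subspace in Corollary~\ref{spinv-non}, and the transgression formula of Proposition~\ref{Tnoncompact}---into the stated direct sum. By equation~\eqref{kert} it suffices to identify $\mathrm{Ker}(T)$, where $T \colon H^2(\ham(X),\R)^{\smp(X)} \rightarrow \bigwedge^3 H^1_{\mathrm{dR}}(X)^*$ is the transgression.

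First I describe the invariant subspace. By Theorem~\ref{H2Ham} every class in $H^2(\ham(X),\R)$ has the form $[\psi_\alpha]$ with $\alpha \in Z^1_c(X)$, and $[\psi_\alpha] \mapsto [\alpha]$ fits into a short exact sequence $0 \rightarrow \R[\psi_{KS}] \rightarrow H^2(\ham(X),\R) \rightarrow H^1_{\mathrm{dR},c}(X) \rightarrow 0$, the kernel being spanned by $[\psi_{KS}] = [\psi_{dh}]$ with $\langle h\rangle = 1$. By Corollary~\ref{spinv-non}, $[\psi_\alpha]$ is $\smp(X)$-invariant precisely when $[\alpha] \in \mathrm{Ker}(B)$, so restricting gives
\[
0 \rightarrow \R[\psi_{KS}] \rightarrow H^2(\ham(X),\R)^{\smp(X)} \rightarrow \mathrm{Ker}(B) \rightarrow 0.
\]
Next, Proposition~\ref{Tnoncompact} shows that for an invariant class the transgression is $T([\psi_\alpha]) = ([\alpha],\,\cdot\,,\,\cdot\,,\,\cdot\,)$, which depends only on $[\alpha] \in H^1_{\mathrm{dR},c}(X)$. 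Hence $T$ factors as the projection above followed by the linear map $T \colon H^1_{\mathrm{dR},c}(X) \rightarrow \bigwedge^3 H^1_{\mathrm{dR}}(X)^*$ of equation~\eqref{4linmap}; in particular $T$ annihilates $[\psi_{KS}]$, which maps to $0 \in H^1_{\mathrm{dR},c}(X)$. Taking kernels through the sequence yields
\[
0 \rightarrow \R[\psi_{KS}] \rightarrow \mathrm{Ker}(T) \rightarrow \mathrm{Ker}(B) \cap \mathrm{Ker}(T) \rightarrow 0,
\]
where on the right $\mathrm{Ker}(B)\cap\mathrm{Ker}(T) \subseteq H^1_{\mathrm{dR},c}(X)$ is the intersection of the kernels of the two linear maps, i.e.\ exactly the classes $a$ with $(a,b)=0$ and $(a,b_1,b_2,b_3)=0$ for all $b,b_1,b_2,b_3 \in H^1_{\mathrm{dR}}(X)$.

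Finally I split off the Kostant--Souriau line concretely by means of $\psi'_{KS}(v,w) := \omega(v,w)_x$ on $\smp(X)$. Its restriction to $\ham(X)$ is $\omega(X_f,X_g)_x = \{f,g\}_x = \psi_{KS}(X_f,X_g)$, so $\iota^*[\psi'_{KS}] = [\psi_{KS}]$, which is nonzero for noncompact $X$ by Corollary~\ref{corr:KSextension}; thus $[\psi'_{KS}] \in H^2(\smp(X),\R)$ maps to $[\psi_{KS}]$ and provides a canonical complement. The one computation to carry out is the cocycle identity $\dd\psi'_{KS}=0$: since $[v_i,v_j] = X_{\omega(v_i,v_j)}$ one has $\omega([v_1,v_2],v_3) = -v_3(\omega(v_1,v_2))$, so the cyclic sum equals $-\sum_{\mathrm{cycl}} v_1(\omega(v_2,v_3))$, while $d\omega=0$ gives $\sum_{\mathrm{cycl}}\omega([v_1,v_2],v_3) = +\sum_{\mathrm{cycl}} v_1(\omega(v_2,v_3))$; the sum thus equals its own negative and vanishes. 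Combining this splitting with \eqref{kert} gives $\mathrm{Ker}(T) \simeq \R[\psi'_{KS}] \oplus (\mathrm{Ker}(B)\cap\mathrm{Ker}(T))$ and hence the claimed isomorphism.

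The main obstacle is modest, as the heavy lifting resides in Proposition~\ref{Tnoncompact} and Corollary~\ref{spinv-non}: it is the bookkeeping around the non-canonical identifications in Theorem~\ref{H2Ham}. One must verify that the transgression genuinely descends to $H^1_{\mathrm{dR},c}(X)$ (independently of the representative $\alpha$), and that $[\psi'_{KS}]$ furnishes a complement to $\mathrm{Ker}(B)\cap\mathrm{Ker}(T)$ inside $\mathrm{Ker}(T)$ that is independent of the base point $x$; the latter follows because any two choices of $x$ change $\psi'_{KS}$ by a coboundary, as the Kostant--Souriau class on $\ham(X)$ is base-point independent.
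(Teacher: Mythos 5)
Your proposal is correct and follows essentially the same route as the paper: reduce to $\mathrm{Ker}(T)$ via the five-term sequence \eqref{kert}, identify the invariant subspace through Corollary~\ref{spinv-non}, apply the transgression formula of Proposition~\ref{Tnoncompact}, and split off the Kostant--Souriau line with the explicit extension $\psi'_{KS}(v,w)=\omega(v,w)_x$. You in fact supply slightly more detail than the paper, which asserts without proof that $\psi'_{KS}$ is a cocycle on $\smp(X)$; your verification via $d\omega=0$ and $[v_1,v_2]=X_{\omega(v_1,v_2)}$ is correct.
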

We apply Theorem \ref{thm:spncpt} to a few
(classes of) examples, where we calculate
 $\mathrm{Ker}(B)$ and $\mathrm{Ker}(T)$
 using standard methods 
in algebraic topology.

\begin{Example}[Noncompact surfaces]\label{ncptsurface}
For a 2-dimensional surface $\Sigma$, the map 
$B \colon H^1_{\mathrm{dR},c}(\Sigma) \rightarrow H^1_{\mathrm{dR}}(\Sigma)^*$
is an isomorphism by Poincar\'e duality. If $\Sigma$ is noncompact, 
Theorem~\ref{thm:spncpt} yields
$H^2(\smp(\Sigma),\R) \simeq {\textstyle \bigwedge^{2}}(H^1_{\mathrm{dR}}(\Sigma)^*)
\oplus \R [\psi'_{KS}]$ .
\end{Example}
Also for cotangent spaces, the Roger cocycles are trivial in cohomology. 
\begin{Corollary}
Let $T^*Q$ be the cotangent space of a connected manifold $Q$.
Then all 
Roger cocycles are trivial, and there is 
an isomorphism 
\[H^2(\smp(T^*Q),\R) \simeq {\textstyle \bigwedge^2 }(H^1_{\mathrm{dR}}(Q)^*) \oplus \R[\psi'_{KS}]\,.\]
\end{Corollary}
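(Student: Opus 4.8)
The plan is to read off the answer from Theorem~\ref{thm:spncpt}. Since $Q$ is connected and $T^*Q$ is a vector bundle of positive rank over $Q$, the total space $X:=T^*Q$, equipped with its canonical symplectic form, is a connected \emph{noncompact} symplectic manifold; its orientability is automatic, as $\omega^n$ is a volume form. Hence Theorem~\ref{thm:spncpt} applies and gives
\[
H^2(\smp(T^*Q),\R) \simeq {\textstyle\bigwedge^2} H^1_{\mathrm{dR}}(T^*Q)^* \oplus \R[\psi'_{KS}] \oplus \big(\mathrm{Ker}(B)\cap\mathrm{Ker}(T)\big).
\]
Writing $n=\dim Q$, so $\dim T^*Q = 2n$, it then remains only to rewrite the first summand in terms of $Q$ and to show that the last summand vanishes; the latter is precisely the assertion that all Roger cocycles are trivial.

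For the first summand I would use that the bundle projection $p\colon T^*Q\to Q$ and the zero section are mutually inverse homotopy equivalences, so $p^*$ induces an isomorphism $H^1_{\mathrm{dR}}(Q)\simeq H^1_{\mathrm{dR}}(T^*Q)$, whence $\bigwedge^2 H^1_{\mathrm{dR}}(T^*Q)^*\simeq \bigwedge^2 H^1_{\mathrm{dR}}(Q)^*$. The genuine content is to prove $\mathrm{Ker}(B)\cap\mathrm{Ker}(T)=0$, for which it suffices to show $\mathrm{Ker}(B)=0$. I would distinguish two cases according to the dimension of $Q$. If $n\ge 2$, then $2n-1>n=\dim Q$, so by homotopy invariance $H^{2n-1}_{\mathrm{dR}}(T^*Q)\simeq H^{2n-1}_{\mathrm{dR}}(Q)=0$, and Poincar\'e duality gives $H^1_{\mathrm{dR},c}(T^*Q)\simeq H^{2n-1}_{\mathrm{dR}}(T^*Q)^*=0$; thus the whole ambient space $H^1_{\mathrm{dR},c}(T^*Q)$ containing $\mathrm{Ker}(B)\cap\mathrm{Ker}(T)$ is already zero, and there is nothing more to check. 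If $n=1$, then $T^*Q$ is a noncompact surface ($T^*\R$ or the cylinder $T^*S^1$), and $B$ is the intersection pairing $([\alpha],[\beta])=\int_X\alpha\wedge\beta$, which is nondegenerate by Poincar\'e duality, so $\mathrm{Ker}(B)=0$; this is exactly Example~\ref{ncptsurface}.

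A unified way to see both cases at once is to note that $B([\alpha])([\beta])=\int_X\alpha\wedge(\beta\wedge\omega^{n-1}/(n-1)!)$ is the Poincar\'e pairing of $[\alpha]\in H^1_{\mathrm{dR},c}(X)$ with the class $[\beta\wedge\omega^{n-1}/(n-1)!]\in H^{2n-1}_{\mathrm{dR}}(X)$, so that $\mathrm{Ker}(B)$ is the annihilator of the image of the Lefschetz-type map $H^1_{\mathrm{dR}}(X)\to H^{2n-1}_{\mathrm{dR}}(X)$; this map is surjective because its target vanishes for $n\ge 2$ and equals the identity for $n=1$, and surjectivity together with nondegeneracy of the pairing forces $\mathrm{Ker}(B)=0$. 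The only delicate ingredient is the appeal to Poincar\'e duality $H^1_{\mathrm{dR},c}(X)^*\simeq H^{2n-1}_{\mathrm{dR}}(X)$ on the noncompact manifold $X=T^*Q$, which is exactly the duality already used in the proof of Theorem~\ref{HoofdZaak}. Combining the three steps yields $H^2(\smp(T^*Q),\R)\simeq \bigwedge^2 H^1_{\mathrm{dR}}(Q)^*\oplus\R[\psi'_{KS}]$, as claimed.
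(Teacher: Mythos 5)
Your proposal is correct and follows essentially the same route as the paper: invoke Theorem~\ref{thm:spncpt}, identify $H^1_{\mathrm{dR}}(T^*Q)\simeq H^1_{\mathrm{dR}}(Q)$ via the homotopy equivalence, and kill $\mathrm{Ker}(B)\cap\mathrm{Ker}(T)$ by showing $H^1_{\mathrm{dR},c}(T^*Q)\simeq H^{2n-1}_{\mathrm{dR}}(Q)^*=0$ for $n>1$ via Poincar\'e duality, falling back on Example~\ref{ncptsurface} for $n=1$. The unified Lefschetz-map remark at the end is a nice packaging but adds nothing beyond the paper's case split.
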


\begin{proof}
By Poincar\'e duality, 
$H^{1}_{\mathrm{dR},c}(T^*Q)$ is isomorphic to 
$H^{2n-1}_{\mathrm{dR}}(T^*Q)^*$.
Since $Q$ is a deformation retract of $T^*Q$, 
we have
$H^{2n-1}_{\mathrm{dR}}(T^*Q) \simeq H^{2n-1}_{\mathrm{dR}}(Q)$,
so $H^{1}_{\mathrm{dR},c}(T^*Q) \simeq H^{2n-1}_{\mathrm{dR}}(T^*Q)^* \simeq H^{2n-1}_{\mathrm{dR}}(Q)^*$ 
vanishes for $n>1$.
For $n=1$, $H^{1}_{\mathrm{dR},c}(T^*Q)$ need not vanish, but 
$\mathrm{Ker}(B) = \{0\}$ by Example \ref{ncptsurface}.
Since $H^{1}_{\mathrm{dR}}(Q)^* \simeq H^{1}_{\mathrm{dR}}(T^*Q)^*$,
this concludes the proof.  
\end{proof}

\begin{Remark}[Punctured symplectic manifolds]\label{Rk:punctures}
Suppose that $X = M - \{m\}$ is obtained from a \emph{compact} symplectic manifold $M$
by removing a point $m\in M$. 
Then the Mayer-Vietoris sequence for de Rham cohomology shows that 
the pullback $\iota^* \colon H^1_{\mathrm{dR}}(M) \rightarrow H_{\mathrm{dR}}^1(X)$ 
by the inclusion 
$\iota \colon X \hookrightarrow M$ is an isomorphism. Similarly, the pushforward
$\iota_{*} \colon H^1_{\mathrm{dR},c}(X) \rightarrow H^1_{\mathrm{dR}}(M)$ 
is an isomorphism by
the Mayer-Vietoris sequence 
for compactly supported de Rham cohomology.
Since every class $[\alpha] \in H^1_{\mathrm{dR}}(X)$ can be represented by a compactly
supported 1-form $\alpha$, the maps 
$H^1_{\mathrm{dR}, c}(X) \times H^1_{\mathrm{dR}}(X) \rightarrow \R$
and
$H^1_{\mathrm{dR}, c}(X) \times \bigwedge^{3}H^1_{\mathrm{dR}}(X) \rightarrow \R$
induced by equation 
\eqref{pair-nonII}
and
\eqref{4linmap}
agree with the corresponding alternating forms $\bigwedge^{2}H_{\mathrm{dR}}^1(M) \rightarrow \R$
and $\bigwedge^{4}H^1_{\mathrm{dR}}(M) \rightarrow \R$ for the compact manifold $M$.
\end{Remark}

The following situation affords an example where some, 
but not all, Roger cocycles extend to the Lie algebra of symplectic vector fields.

\begin{Example}[Thurston's manifold with puncture]
Consider the noncompact symplectic manifold
$X^4_{m} := X^{4} - \{m\}$, where  
$X^4$ is Thurston's symplectic manifold of Example \ref{Thurston}.
By Remark \ref{Rk:punctures}, we can determine $\mathrm{Ker}(B)$
and $\mathrm{Ker}(T)$ from the cohomology ring $H^{\bullet}_{\mathrm{dR}}(X^4)$
of the compact manifold,
which was calculated in Example \ref{Thurston}.
Recall from equation \eqref{HoneThurston}
that
\[
H_{\dR}^1(X^4) =  \left <[x^*],[p^*], [z^*]\right>\,.
\]
Using the formula
$\omega=  h^* \wedge x^* +   z^* \wedge p^*$
and the fact that Liouville form $\omega^2/2 =   z^* \wedge p^* \wedge h^* \wedge x^*$ integrates to 1,  
we find
\begin{equation}\label{xpz}
([x^*],[p^*])=([x^*],[z^*])=0,\quad ([z^*],[p^*])= 1
\end{equation}
for the alternating pairing \eqref{pair-nonII}.
It follows that $\mathrm{Ker}(B) = \R[x^*]$ is 1-dimensional,
and the $\smp(X^4_{m})$-invariant part of $H^2(\ham(X^4_{m}),\R)$
is spanned by a single class $[\psi_{\widetilde{x}^*}]$,
with $\widetilde{x}^*$ a compactly supported 1-form on $X^4_{m}$ cohomologous to $x^*$.
Since the alternating 4-linear map \eqref{eq:alt4form} on $H_{\mathrm{dR}}^1(X^4)$
vanishes for dimensional reasons, the invariant class $[\psi_{\widetilde{x}^*}]
\in H^2(\ham(X^4_{m}),\R)$
extends 
to $[\psi'_{\widetilde{x}^*}] \in H^2(\smp(X^4_{m}),\R)$. For the punctured Thurston manifold $X^{4}_{m}$, we thus find
\[H^2(\smp(X^4_{m}),\R) \simeq {\textstyle \bigwedge^2(\R^3)} \oplus \R[\psi'_{KS}] 
\oplus \R [\psi'_{\widetilde{x}^*}]\,.\]
\end{Example}

\section*{Acknowledgements}
C.V.\ was supported by the grant PN-II-ID-PCE-2011-3-0921
of the Romanian National Authority for Scientific Research.
B.J.\ was supported by the NWO grant 
613.001.214 ``Generalised Lie algebra sheaves''.

This is a pre-copyedited, author-produced version of an article accepted for publication in International Mathematical Research Notices following peer review. The version of record, Int.\ Math.\ Res.\ Not. (2016), Vol. 2016, No.\ 16, pp.\ 4996--5047
is available online at: 
{\tt https://doi.org/10.1093/imrn/rnv301}.

\bibliographystyle{alpha}
\bibliography{lijst}

\end{document}